\documentclass[a4paper,11pt,reqno]{amsart}

\usepackage[margin=1in,includehead,includefoot]{geometry}
\usepackage[utf8]{inputenc}
\usepackage[T1]{fontenc}
\usepackage{lmodern,microtype}
\usepackage{hyperref}
\usepackage{url}
\usepackage{booktabs}
\usepackage{mathtools,amssymb,amsthm,amsmath}
\usepackage[foot]{amsaddr}
\usepackage{graphicx, color}
\usepackage[margin=5mm]{caption}
\usepackage{enumitem}
\usepackage{todonotes}
\usepackage[algosection,algoruled,vlined,nofillcomment]{algorithm2e}
\usepackage{tikz,pgfplots}
\usepackage{mycommands}

\graphicspath{{./graphics/}}

\usetikzlibrary{shapes.geometric,shapes.multipart,shapes.misc,positioning,decorations.pathreplacing,plotmarks,patterns,calc,matrix}
\pgfplotsset{compat=1.11}

\DontPrintSemicolon
\SetKwProg{myfunc}{Function}{}{}

\newtheorem{theorem}{Theorem}
\newtheorem{lemma}[theorem]{Lemma}
\newtheorem{conjecture}[theorem]{Conjecture}

\hypersetup{
  pdftitle={Matchings in hypercubes extend to long cycles},
  pdfauthor={Ji\v{r}\'i Fink, Torsten M\"utze}
}

\title{Matchings in hypercubes extend to long cycles}

\author{Ji\v{r}\'i Fink}
\address[Ji\v{r}\'i Fink]{Department of Theoretical Computer Science and Mathematical Logic, Charles University, Prague, Czech Republic}
\email{fink@ktiml.mff.cuni.cz}

\author{Torsten M\"utze}
\address[Torsten M\"utze]{Department of Computer Science, University of Warwick, United Kingdom \& Department of Theoretical Computer Science and Mathematical Logic, Charles University, Prague, Czech Republic}
\email{torsten.mutze@warwick.ac.uk}

\thanks{An extended abstract of this paper appeared in the Proceedings of IWOCA~2024~\cite{MR4786527}.}
\thanks{This work was supported by Czech Science Foundation grant GA~22-15272S. Both authors participated in the workshop `Combinatorics, Algorithms and Geometry' in March 2024, which was funded by German Science Foundation grant~522790373.}

\begin{document}

\begin{abstract}
The $d$-dimensional hypercube graph~$Q_d$ has as vertices all subsets of~$\{1,\ldots,d\}$, and an edge between any two sets that differ in a single element.
The Ruskey-Savage conjecture asserts that every matching of~$Q_d$, $d\ge 2$, can be extended to a Hamilton cycle, i.e., to a cycle that visits every vertex exactly once.
We prove that every matching of~$Q_d$, $d\ge 2$, can be extended to a cycle that visits at least a~$2/3$-fraction of all vertices.
\end{abstract}

\maketitle

\section{Introduction}

Cycles and matchings in graphs are structures of fundamental interest.
In this paper, we consider these structures in hypercubes, a family of graphs that has been studied widely in computer science and mathematics.
Specifically, the \defi{$d$-dimensional hypercube~$Q_d$} is the graph whose vertices are all subsets of~$[d]:=\{1,\ldots,d\}$ and whose edges connect sets that differ in a single element; see Figure~\ref{fig:rs}~(a).
It is well-known and easy to show that~$Q_d$, $d\ge 2$, admits a \defi{Hamilton cycle}, i.e., a cycle that visits every vertex exactly once.
Clearly, any Hamilton cycle in~$Q_d$ is the union of two perfect matchings.
A \defi{matching} in a graph is a set of edges that are pairwise disjoint, and a matching is \defi{perfect} if it includes every vertex of the graph.

\begin{figure}
\centerline{
\includegraphics[page=1]{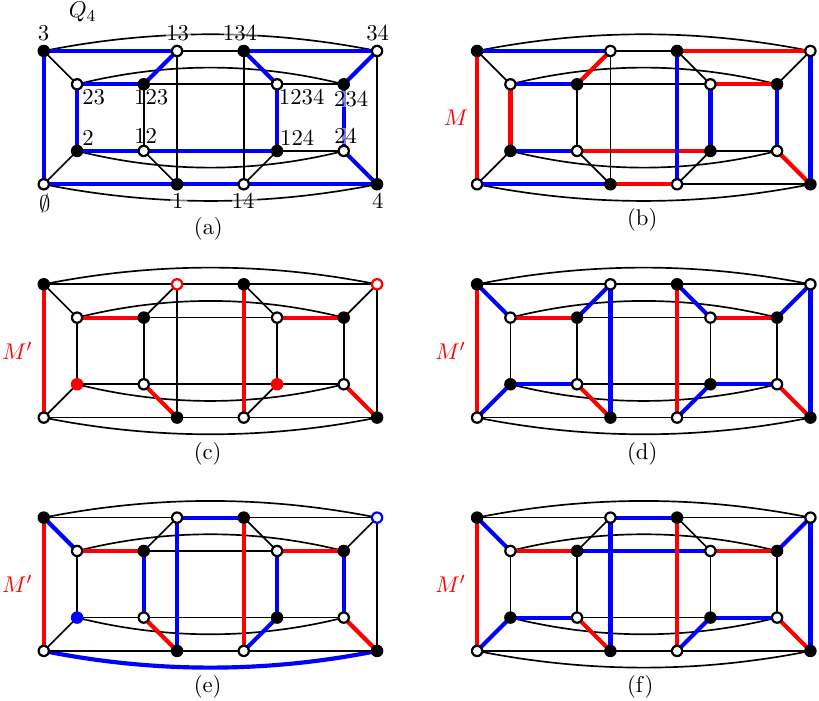}
}
\caption{Cycles and matchings in the hypercube~$Q_4$: (a)~a Hamilton cycle, where the vertex labels omit curly brackets and commas for conciseness;
(b)~a perfect matching~$M$ and an extension of~$M$ to a Hamilton cycle;
(c)~a maximal, but not perfect matching~$M'$;
(d)~an extension of~$M'$ to a cycle factor with two cycles;
(e)~an extension of~$M'$ to a (non-Hamilton) cycle;
(f)~an extension of~$M'$ to a Hamilton cycle, obtained by joining the two cycles in~(d).}
\label{fig:rs}
\end{figure}

30 years ago, Ruskey and Savage~\cite{MR1201997} asked whether every matching of~$Q_d$ can be extended to a Hamilton cycle.
This became known as the \defi{Ruskey-Savage conjecture}.

\begin{conjecture}[\cite{MR1201997}]
\label{conj:ruskey-savage}
Every matching of~$Q_d$, $d\ge 2$, can be extended to a Hamilton cycle.
\end{conjecture}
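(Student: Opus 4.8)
The plan is to prove the statement by induction on the dimension~$d$. First I would reduce to the case where $M$ is a \emph{maximal} matching: if $M^\ast\supseteq M$ is maximal and $C$ is a cycle with $M^\ast\subseteq C$, then $C$ extends~$M$ and visits $|V(C)|\ge 2|M^\ast|\ge 2|M|$ vertices, so it is enough to prove the theorem for maximal matchings. For a maximal~$M$ the set~$U$ of $M$-unmatched vertices is an independent set all of whose vertices have every neighbour matched by~$M$; in particular $M$ already covers a constant fraction of~$V(Q_d)$, so the heart of the problem is to route the edges of~$M$ onto a \emph{single} long cycle rather than to reach many extra vertices. For the inductive step I would split $Q_d=Q^0\cup Q^1$ along a carefully chosen direction $i\in[d]$, where $Q^0,Q^1\cong Q_{d-1}$ are the two subcubes with coordinate~$i$ fixed and $E_i$ is the perfect matching of $i$-edges joining them, and decompose $M=M^0\cup M^1\cup D$ with $M^j:=M\cap E(Q^j)$ and $D:=M\cap E_i$.

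The key point is that a naive recursion into both subcubes does not suffice, because two long cycles in~$Q^0$ and~$Q^1$ need not share an aligned edge along which to merge. Instead I would strengthen the inductive hypothesis to a statement about \emph{linear forests} (unions of vertex-disjoint paths): for every matching of~$Q_{d-1}$ and every admissible prescription of path-endpoints (together with a pairing of these endpoints into the two ends of each path), there is a linear forest containing the matching, realising the prescribed endpoint data, and covering at least $\tfrac23\,2^{d-1}$ vertices. This is the right object because in any cycle $C\supseteq M$, deleting the $i$-edges of~$C$ leaves exactly such a linear forest in each~$Q^j$, with $M^j$ inside it and with the $Q^j$-endpoints of the edges of~$D$ among its path-ends. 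Running the step then amounts to: apply the strengthened hypothesis in~$Q^0$ and in~$Q^1$ with endpoint data dictated by~$D$ (possibly augmented by a bounded number of extra $i$-edges inserted as ``rungs'' to reduce the number of paths), and splice the two linear forests together along~$D$ and the rungs. Splicing is lossless, so the result covers at least $\tfrac23\,2^{d-1}+\tfrac23\,2^{d-1}=\tfrac23\,2^{d}$ vertices; the only nontrivial requirement is that the pairing and the choice of rungs make the spliced object a single cycle rather than a disjoint union of cycles. In the many configurations where a local matching~$M^j$ (together with its endpoint prescription) completes to a perfect matching of~$Q^j$, one can instead invoke Fink's theorem that every perfect matching of a cube extends to a Hamilton cycle (and its Hamilton-path analogue with prescribed ends) to cover \emph{all} of that subcube; thus $2/3$ is merely the honest guarantee of the linear-forest sub-problem in its worst cases, and since the recursion neither improves nor degrades it, it is also the final bound.

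The main obstacle, as usual for such theorems, is the bookkeeping of the inductive step. Two things must be engineered simultaneously: (i)~the strengthened hypothesis must actually hold, which forces one to pin down the correct ``admissibility'' conditions on an endpoint prescription---a parity condition plus a König/Hall-type counting condition relating the number of forced endpoints to the sizes and locations of~$M^0$, $M^1$, $D$---so that the $\tfrac23\,2^{d-1}$ bound is attainable; and (ii)~the split direction~$i$, the endpoint pairing, and the extra rungs must be chosen so that the splicing closes up into one cycle. This drives a somewhat intricate case analysis according to~$|M^0|,|M^1|,|D|$, the number of forced path-ends, and parity, with the small cases (say $d\le 4$, and the configurations with very few forced endpoints) verified directly. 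Anything better than the $2/3$ fraction would require a genuinely new way to handle the extremal linear-forest configurations---which is essentially the content of the Ruskey-Savage conjecture itself.
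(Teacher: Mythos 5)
The statement you were asked to address is Conjecture~\ref{conj:ruskey-savage}, which is an \emph{open conjecture}: the paper does not prove it, and explicitly presents its main results (Theorems~\ref{thm:cycle}--\ref{thm:long-KQd}) as relaxations of it. Your proposal does not prove it either. Despite being offered as a proof of the Hamilton-cycle statement, your argument only ever claims to produce a cycle covering a $2/3$-fraction of the vertices, and you concede in your final sentence that closing the gap to a Hamilton cycle ``is essentially the content of the Ruskey-Savage conjecture itself.'' So at best you have sketched an approach to Theorem~\ref{thm:long-Qd}, not to the stated conjecture, and you should say so explicitly rather than presenting a strictly weaker conclusion under the header of the full statement.

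Even judged as a sketch of the $2/3$ result, the proposal has a genuine gap precisely where you defer the work: the ``admissibility conditions'' on the endpoint prescription. A parity condition plus a Hall-type counting condition is not the right answer. The actual obstruction, which the paper identifies and which drives its entire induction, is the presence of \emph{half-layers} in the matching: if $M$ restricted to a subcube contains a half-layer in some direction and covers everything else on one side, then no extending cycle avoiding the designated vertex exists (this is the forward implication of Theorem~\ref{thm:forbidden}). Your recursion into $Q^0$ and $Q^1$ will run into exactly this obstruction, and without naming it you cannot choose the split direction~$i$, the extra ``rungs,'' or the auxiliary matching on the crossing endpoints so as to avoid it. Two further points you do not address: (a) when $|D|$ is odd the cycle must use one additional $i$-edge, and choosing that edge without creating a half-layer in either subcube is the bulk of the paper's case analysis; (b) the induction only closes if one strengthens the statement from matchings of~$Q_{d-1}$ to matchings of~$K(Q_{d-1})$, i.e.\ allows the prescribed matching to use arbitrary chords, because the shortcut edges of the linear forest in one subcube become long edges of the matching fed to the other subcube. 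The paper's actual route to Theorem~\ref{thm:long-Qd} is also structurally different from yours: the $2/3$ factor comes from a one-line combination of Forcade's bound on maximal matchings (Lemma~\ref{lem:max-Qd}) with Theorem~\ref{thm:cycle}, and all the inductive difficulty is concentrated in Theorem~\ref{thm:forbidden} (a single cycle avoiding one forbidden vertex, characterized exactly by the half-layer property~\propH{}), not in a linear-forest statement with a $2/3$ coverage guarantee.
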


This problem received considerable attention, and several natural relaxations have been proved.
In particular, Fink~\cite{MR2354719} settled the conjecture affirmatively for the case when the prescribed matching is perfect, thereby answering a problem due to Kreweras~\cite{MR1374632}; see Figure~\ref{fig:rs}~(b).
In fact, Fink established a considerable strengthening, obtained by considering the graph~$K(Q_d)$, which is the complete graph on the vertex set of~$Q_d$.
In this context, we say that a matching~$M$ of~$K(Q_d)$ \defi{extends to} a Hamilton cycle~$C$ (or some other structure) if all edges in~$C\setminus M$ belong to~$Q_d$.
Put differently, while the matching~$M$ may use arbitrary edges of~$K(Q_d)$, including edges not present in~$Q_d$, the edges $C\setminus M$ used for the extension are required to be edges of the hypercube~$Q_d$ (otherwise $C$ would trivially be present in the complete graph).

\begin{theorem}[\cite{MR2354719}]
\label{thm:kreweras}
Every perfect matching of~$K(Q_d)$, $d\ge 2$, can be extended to a Hamilton cycle.
\end{theorem}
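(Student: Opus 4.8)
The plan is to prove Theorem~\ref{thm:kreweras} by induction on the dimension~$d$, after first strengthening the statement so that the two halves of a split cube can be merged. Specifically, I would prove the following claim by induction: for every $d\ge 2$, every perfect matching $M$ of $K(Q_d)$, and every edge $e$ of~$Q_d$, there is a perfect matching $P$ of~$Q_d$ with $e\in M\cup P$ such that $M\cup P$ is a Hamilton cycle of~$K(Q_d)$. Note that, since $M$ is perfect, $M\cup P$ automatically decomposes into cycles that alternate between $M$-edges and $P$-edges, so the real content is that this union is a \emph{single} cycle; the freedom to prescribe an edge~$e$ is precisely what makes the merging step work. The base case $d=2$ is a finite check: here $Q_2$ is a $4$-cycle and $K(Q_2)=K_4$, so $M$ is either one of the two perfect matchings of~$Q_2$, in which case the complementary perfect matching of~$Q_2$ closes up a Hamilton cycle, or the pair of diagonals of~$K_4$, in which case either perfect matching of~$Q_2$ works; in each case one checks that the prescribed edge~$e$ can be placed on the cycle.

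For the inductive step let $d\ge 3$. Pick a coordinate $j\in[d]$ on which the two endpoints of~$e$ agree (this exists because $d\ge 2$) and split $Q_d$ along coordinate~$j$ into two copies $Q^0,Q^1$ of~$Q_{d-1}$ joined by the perfect matching $E_j$ of dimension-$j$ edges, so that $e$ lies inside one half, say~$Q^0$. Consider first the case that no edge of~$M$ runs between $Q^0$ and~$Q^1$; then $M=M^0\cup M^1$, with $M^i$ a perfect matching of~$K(Q^i)$. Apply the strengthened claim in~$Q^0$ to $(M^0,e)$ to obtain a Hamilton cycle $C^0=M^0\cup P^0$ of~$Q^0$ through~$e$, and then choose a splicing edge $f=ab\in P^0$ with $f\ne e$ whose mirror image $f'=a'b'$ in~$Q^1$ can be realised as a $P$-edge of a Hamilton cycle $C^1=M^1\cup P^1$ of~$Q^1$ obtained by applying the strengthened claim in~$Q^1$ to $(M^1,f')$ (a short argument, with the case $d=3$ checked directly, shows such a pair $f,f'$ can be found). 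Replacing $f$ and~$f'$ by the two $E_j$-edges $aa'$ and~$bb'$ splices $C^0$ and~$C^1$ into a single Hamilton cycle $M\cup P$ of~$Q_d$ that still contains~$e$.

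The main obstacle is the remaining case, where $M$ has edges running between $Q^0$ and~$Q^1$. The idea is to trade such crossing edges for edges lying inside the halves: given two crossing edges $u_1v_1,u_2v_2\in M$ with $u_1,u_2\in Q^0$ and $v_1,v_2\in Q^1$, replace them in~$M$ by the edges $u_1u_2\in K(Q^0)$ and $v_1v_2\in K(Q^1)$, obtaining a perfect matching $M'$ of~$K(Q_d)$ with two fewer crossing edges; iterating reduces to the no-crossing case (with a near-perfect-matching, Hamilton-path version of the claim invoked once when the number of crossings is odd, one crossing edge then serving as the link between the two halves). The delicate point --- and the heart of the whole proof --- is to undo these substitutions: deleting $u_1u_2$ and $v_1v_2$ from the Hamilton cycle produced for~$M'$ and re-inserting $u_1v_1$ and~$u_2v_2$ yields a single Hamilton cycle of~$Q_d\cup M$ only when the four vertices $u_1,v_1,u_2,v_2$ appear in a compatible cyclic order along that cycle. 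Forcing this compatibility --- by a careful choice of which crossing edges to pair, or by prescribing the relevant edges through the strengthened claim, or by rerouting locally through~$E_j$ --- uniformly over all crossing edges is the crux of the argument; once it is arranged, the induction closes, and Theorem~\ref{thm:kreweras} follows by taking~$e$ to be an arbitrary edge of~$Q_d$.
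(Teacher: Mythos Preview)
The theorem is cited from Fink~\cite{MR2354719} and not re-proved in the paper, but the paper describes the method in the paragraph following the statement and carries it out verbatim in Case~1 of the proof of Theorem~\ref{thm:forbidden}. Your proposal misses the central trick of that method and leaves a genuine gap.

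First, two unnecessary complications. The extra prescribed edge~$e$ in your strengthened claim is not needed, and the ``odd number of crossings'' case you mention cannot occur: for a perfect matching~$M$ of~$K(Q_d)$, the set~$M^i_-$ has even size in every direction~$i$, since every edge of~$M^i_0$ covers two vertices of~$Q^i_0$, every edge of~$M^i_-$ covers one, and~$|V(Q^i_0)|$ is even. Moreover $\sum_i |M^i_-|\ge |M|=2^{d-1}$, so some direction has $|M^i_-|\ge 2$, and the ``no crossings'' case need not be handled separately either.

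The real problem is your treatment of the crossings. You pair the endpoints on both sides simultaneously and in a linked way---$u_1u_2$ in~$Q^0$ together with $v_1v_2$ in~$Q^1$, where $u_kv_k\in M$---and then hope that the Hamilton cycle you obtain for~$M'$ permits reversing all these substitutions at once. As you yourself observe, removing $u_1u_2$ and $v_1v_2$ from a Hamilton cycle and inserting $u_1v_1,u_2v_2$ can produce two cycles instead of one; with many crossing edges these compatibility constraints interact, and your list of proposed fixes (``careful choice'', ``prescribing edges'', ``rerouting locally'') is not an argument. Fink's proof sidesteps the issue entirely: choose an arbitrary perfect matching~$P_0$ on $A_0:=V(Q^i_0)\cap V(M^i_-)$, apply induction to $M^i_0\cup P_0$ in~$K(Q^i_0)$ to obtain a Hamilton cycle~$C_0$, and only then let~$P_1$ be the \emph{shortcut edges} of the linear forest $(C_0\setminus P_0)\cup M^i_-$, i.e., the pairing of~$A_1$ dictated by which endpoints are joined through~$C_0$ and the crossing edges. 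Induction on $M^i_1\cup P_1$ in~$K(Q^i_1)$ gives~$C_1$, and then $(C_0\setminus P_0)\cup M^i_-\cup(C_1\setminus P_1)$ is automatically a single Hamilton cycle, because each edge of~$P_1$ is replaced by precisely the path it was shortcutting. The pairing on the $Q^i_1$ side is determined by~$C_0$, not chosen in advance---this is exactly what dissolves your compatibility obstacle.
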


In fact, the strengthening to consider~$K(Q_d)$ instead of~$Q_d$ is the key idea to Fink's proof, as it makes the induction hypothesis stronger and thus more flexible.
This construction actually shows that every perfect matching of~$K(Q_d)$ can be extended to at least $2^{2^{d-4}}$ distinct Hamilton cycles.
This proof technique has been exploited in several subsequent papers (see e.g.~\cite{MR2510579,MR3830138,MR3337323}), and we will also use it heavily in our arguments.

Clearly, not every matching in~$Q_d$ is perfect or extends to a perfect matching.
In other words, there are (inclusion-)maximal matchings in~$Q_d$ that are not perfect; see Figure~\ref{fig:rs}~(c).
Therefore, Theorem~\ref{thm:kreweras} leaves open the question whether every (not necessarily perfect) matching extends to a Hamilton cycle; see Figure~\ref{fig:rs}~(f).
Dvo\v{r}\'{a}k and Fink~\cite{MR3891930} obtained some positive evidence for small matchings.

\begin{theorem}[\cite{MR3891930}]
Every matching of~$Q_d$, $d\ge 2$, with at most~$d^2/16+d/4$ edges can be extended to a Hamilton cycle.
\end{theorem}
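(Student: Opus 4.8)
The plan is a dimension‑reduction induction on~$d$, in the style of Fink's proof of Theorem~\ref{thm:kreweras}. I would prove the following strengthening: for every matching~$M$ of the complete graph~$K(Q_d)$ that has $|M|\le B_d:=d^2/16+d/4$ and an even number of edges joining two vertices in the same class of the bipartition of~$Q_d$, there is a Hamilton cycle~$C$ of~$K(Q_d)$ with $M\subseteq C$ and $C\setminus M\subseteq Q_d$. A matching of~$Q_d$ has no edges inside a bipartition class, so this implies the theorem; passing to~$K(Q_d)$ is, exactly as in the remark after Theorem~\ref{thm:kreweras}, what makes the induction flexible enough. For the inductive step fix a coordinate~$i$, split $Q_d=Q^0\cup Q^1$ into the two $(d-1)$‑cubes together with the perfect matching of $i$‑edges between them, and let $\bar v$ be the neighbour of~$v$ across coordinate~$i$. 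Write $M=M_0\cup M_1\cup M_\times$, where $M_j$ has both endpoints in~$Q^j$ and the crossing edges~$M_\times$ have $Q^0$‑endpoints $a_1,\dots,a_k$ and $Q^1$‑endpoints $b_1,\dots,b_k$, paired by~$M_\times$. For each~$j$ pick a vertex~$c_j$ of~$Q^0$ — there are $2^{d-1}$ of them, far more than the $O(|M|)$ that have to be avoided — so that $c_1,\dots,c_k$ are distinct, no~$c_j$ lies in~$V(M_0)$, no~$\bar c_j$ lies in~$V(M_1)$, and the parities~$|c_j|$ are chosen so that $M_0':=M_0\cup\{a_1c_1,\dots,a_kc_k\}$ again has an even number of edges inside a bipartition class of~$Q^0$ (one checks that $b_j\bar c_j$ is inside a class of~$Q^1$ precisely when $a_jc_j$ is inside a class of~$Q^0$, so the condition transfers automatically to $M_1':=M_1\cup\{b_1\bar c_1,\dots,b_k\bar c_k\}$; the parity of~$M_0$ itself, which only matters in the borderline case $k=0$, is handled by a suitable choice of~$i$). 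Note $|M_0'|=|M_0|+k$ and $|M_1'|=|M_1|+k$, and apply the induction hypothesis to obtain Hamilton cycles $C_0\supseteq M_0'$ of~$Q^0$ and $C_1\supseteq M_1'$ of~$Q^1$.

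Now reassemble. Starting from the $2$‑factor $C_0\cup C_1$, perform for each~$j$ the swap that deletes $a_jc_j\in C_0$ and $b_j\bar c_j\in C_1$ and inserts the matching edge $a_jb_j\in M_\times$ together with the $i$‑edge $c_j\bar c_j$. The $2k$ vertices involved are distinct, so the swaps are independent; each preserves all degrees, destroys only edges introduced for bookkeeping, and adds one edge of~$M$ and one edge of~$Q_d$. Hence after all~$k$ swaps we have a $2$‑factor~$F$ of~$Q_d$ with $M\subseteq F$ and $F\setminus M\subseteq Q_d$. Inside~$Q^0$ the edges of~$F$ form $k$ vertex‑disjoint paths covering~$V(Q^0)$ (namely $C_0$ minus its $k$ auxiliary edges), similarly inside~$Q^1$, and these paths are joined by the $2k$ inserted $i$‑edges, so~$F$ is a disjoint union of a bounded number of cycles. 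While at least two remain, merge two of them by deleting a pair of \emph{parallel} hypercube edges — one in each cycle, running in a common direction, with endpoints differing in one further coordinate — and inserting the two resulting edges of~$Q_d$. Only $O(d^2)$ of the roughly $2^d$ edges of~$F$ have been touched, so the surviving paths are long and such pairs are plentiful (if necessary one arranges for many of them in advance by choosing $C_0,C_1$ appropriately — the induction can be set up to produce many admissible extensions, as in the bound after Theorem~\ref{thm:kreweras}). After finitely many merges~$F$ is a single Hamilton cycle of~$Q_d$ containing~$M$.

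Finally, the coordinate~$i$ must be chosen so that the induction hypothesis applies in both subcubes, i.e.\ $|M_0'|=|M_0|+k\le B_{d-1}$ and $|M_1'|=|M_1|+k\le B_{d-1}$. If $|M|\le B_{d-1}$ this holds for every~$i$, since $|M_0|+k=|M|-|M_1|\le|M|$ and symmetrically; so the interesting range is $B_{d-1}<|M|\le B_d$. Using $|M_0^{(i)}|+|M_1^{(i)}|+|M_\times^{(i)}|=|M|$, both budget inequalities for direction~$i$ are seen to be equivalent to the single balance condition $\min\{|M_0^{(i)}|,|M_1^{(i)}|\}\ge|M|-B_{d-1}$, where $|M|-B_{d-1}\le B_d-B_{d-1}=(2d+3)/16$. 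The heart of the matter — and, I expect, the main obstacle alongside the re‑merging step — is to show that \emph{some} coordinate satisfies this balance condition (and the $k=0$ parity caveat above) whenever $|M|\le B_d$: intuitively, if every coordinate split~$M$ very unevenly then~$M$ would be forced to sit essentially at one vertex, impossible for more than $B_{d-1}$ distinct edges, and making this precise is exactly where the constant~$1/16$ (rather than something larger) is needed. The base cases are the dimensions below a small absolute constant, where~$|M|$ is bounded and the claim is verified directly.
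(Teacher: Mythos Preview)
This theorem is not proved in the present paper; it is quoted from Dvo\v{r}\'{a}k and Fink~\cite{MR3891930} as background. So there is no proof here to compare your outline against, and I can only assess the outline on its own terms.

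Your sketch contains two places where you explicitly defer the actual work, and in both the deferred work is the substance of the theorem. First, the ``balance'' step: you need a direction~$i$ with $\min\{|M_0^{(i)}|,|M_1^{(i)}|\}\ge |M|-B_{d-1}$, and you say only that ``making this precise is exactly where the constant~$1/16$ \dots\ is needed''. That is the theorem: a generic averaging over directions does not give this, and nothing in the outline indicates how to obtain it. Second, the re-merging step. After your swaps you get a $2$-factor~$F$ that may consist of several cycles, and you propose to merge them by swapping pairs of parallel $Q_d$-edges. But each merge must delete two edges of~$F\setminus M$ (never of~$M$), the two inserted edges must also lie in~$Q_d$, and you must find such a pair \emph{between two distinct cycles of~$F$}. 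You assert these pairs are ``plentiful'' because only $O(d^2)$ edges have been touched, but the number of cycles and their placement are not controlled, and the fallback of ``choosing $C_0,C_1$ appropriately'' is circular without a concrete mechanism.

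There is also a structural issue that manufactures the merging problem in the first place. The standard Fink-style induction---used throughout this paper---does \emph{not} apply the hypothesis independently in~$Q^0$ and~$Q^1$. One first builds~$C_0$ in~$Q^0$, then lets the shortcut edges of the linear forest $(C_0\setminus P_0)\cup M_\times$ form the auxiliary matching~$P_1$ in~$Q^1$, and only then builds~$C_1\supseteq M_1\cup P_1$. With that order the glued object is automatically a single cycle, and no after-the-fact merging is needed. Your symmetric choice of~$c_j$ and~$\bar c_j$ throws this away. If you rewrite the induction in the asymmetric shortcut form, the second gap disappears entirely; the first gap, however, remains and is where the real content of the bound~$d^2/16+d/4$ lies.
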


Another relaxation of the Ruskey-Savage conjecture, proposed by Vandenbussche and West~\cite{MR3089715}, is to consider extensions to a \defi{cycle factor}, i.e., a collection of disjoint cycles that together visit all vertices of the graph; see Figure~\ref{fig:rs}~(d).
This variant of the problem was also settled by Fink~\cite{MR3936192}.

\begin{theorem}[\cite{MR3936192}]
\label{thm:vbwest}
Every matching of~$Q_d$, $d\ge 2$, can be extended to a cycle factor.
\end{theorem}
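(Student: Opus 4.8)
The plan is to prove, by induction on~$d$, the following strengthening --- in the spirit of Fink's proof of Theorem~\ref{thm:kreweras} --- which works with the complete graph $K(Q_d)$ rather than $Q_d$: \emph{every matching~$M$ of $K(Q_d)$, $d\ge 2$, extends to a cycle factor}, where, exactly as in Theorem~\ref{thm:kreweras}, all edges used outside~$M$ must be edges of~$Q_d$ (so short cycles, even triangles through an edge of~$M$, are permitted). Since a matching of~$Q_d$ is in particular a matching of $K(Q_d)$, this implies Theorem~\ref{thm:vbwest}. It is convenient to note that, for a matching $M\subseteq E(Q_d)$, a cycle factor of~$Q_d$ containing~$M$ is the same thing as a spanning $2$-regular subgraph of~$Q_d$ containing~$M$ --- the decomposition into cycles being automatic once every vertex has degree~$2$ --- equivalently, as a spanning subgraph of $Q_d-E(M)$ with degree~$1$ at each $M$-saturated vertex and degree~$2$ at each $M$-exposed vertex; I will keep this degree bookkeeping in mind throughout (and the analogous reformulation holds in $K(Q_d)$). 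The base case $d=2$ is immediate since $Q_2=C_4$, and the case where~$M$ extends to a perfect matching~$N$ of~$Q_d$ needs no induction at all: $Q_d-N$ is $(d-1)$-regular and bipartite, hence has a perfect matching~$N'$ by K\"onig's theorem, and $N\cup N'$ is a cycle factor containing~$M$.

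For the inductive step, fix a coordinate~$i$ and split $Q_d$ into the subcubes $Q^0,Q^1$ spanned by the vertices avoiding, respectively containing,~$i$; put $M^0:=M\cap E(Q^0)$, $M^1:=M\cap E(Q^1)$, and let $c_1,\dots,c_k$ be the edges of~$M$ with one endpoint $v_\ell\in Q^0$ and one endpoint $v_\ell'\in Q^1$ (a crossing $c_\ell$ is an edge of $Q_d$ only if it has direction~$i$; otherwise it is an arbitrary edge of $K(Q_d)$). Assume first that~$k$ is even and pair the~$c_\ell$ arbitrarily as $(c_1,c_2),(c_3,c_4),\dots$; for each pair $(c_{2j-1},c_{2j})$, add the auxiliary edge $\{v_{2j-1},v_{2j}\}$ to $M^0$ and the auxiliary edge $\{v_{2j-1}',v_{2j}'\}$ to $M^1$. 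Since the~$v_\ell$ are distinct and disjoint from $V(M^0)$ (likewise in $Q^1$), this produces matchings $\tilde M^0$ of $K(Q^0)\cong K(Q_{d-1})$ and $\tilde M^1$ of $K(Q^1)$, which by the induction hypothesis extend to cycle factors $\tilde F^0$ and $\tilde F^1$. Now delete every auxiliary edge from $\tilde F^0\cup\tilde F^1$ and add the edges $c_1,\dots,c_k$: each~$v_\ell$ simply trades its incident auxiliary edge for its incident~$c_\ell$, so all degrees stay~$2$, the result is a disjoint union of cycles spanning $V(Q_d)$, and it contains $M^0\cup M^1\cup\{c_1,\dots,c_k\}=M$. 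This is exactly the step that forces the passage to $K(Q_d)$, as the auxiliary edges are generally not edges of~$Q_d$. If~$k$ is odd, the very same argument applies once we can supply one further direction-$i$ edge $\{w,w'\}$ of~$Q_d$ with $w,w'$ both $M$-exposed: pair $c_k$ with $\{w,w'\}$ (adding auxiliary edges $\{v_k,w\}$ and $\{v_k',w'\}$), pair $c_1,\dots,c_{k-1}$ as before, recurse, and stitch, the edge $\{w,w'\}$ harmlessly ending up in the factor. A direct check shows such a free direction-$i$ edge exists precisely when the set~$U$ of $M$-exposed vertices contains two vertices differing in coordinate~$i$. Thus the recursion goes through whenever some coordinate~$i$ has an even number of $M$-edges of direction~$i$, or~$U$ contains an edge of direction~$i$.

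The one obstructed situation is that in which, for every coordinate, the number of $M$-edges of that direction is odd and no two $M$-exposed vertices differ in that coordinate --- i.e. $U$ is independent in $Q_d$ (so, when $M\subseteq E(Q_d)$, $M$ is a maximal, non-perfect matching, a case \emph{not} covered by the easy case above, and present already for $d=3$). I expect this to be the main obstacle. Here~$U$ is independent with $|U|\ge 2$ and equally many vertices in each part of the bipartition; one natural line of attack is to pair~$U$ up by arbitrary edges into a perfect matching $N\supseteq M$ of $K(Q_d)$, apply Theorem~\ref{thm:kreweras} to obtain a Hamilton cycle $C\supseteq N$ of $K(Q_d)$ with $C\setminus N\subseteq Q_d$, delete from~$C$ the $|U|/2$ added edges joining pairs of~$U$ to get vertex-disjoint $Q_d$-paths whose endpoints are precisely the vertices of~$U$, and then reassemble these paths into cycles by rotations along them --- one cannot merely join the loose ends directly, since~$U$ is independent. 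Showing that such a reassembly always succeeds (or, alternatively, verifying the degree condition of the previous paragraph head-on, by combining the edge-isoperimetric inequality of the hypercube with a parity count capturing the ``odd in every direction'' phenomenon) is the crux; the remainder is the by-now-routine subcube induction, made flexible by working in $K(Q_d)$.
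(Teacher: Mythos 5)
First, note that Theorem~\ref{thm:vbwest} is quoted from~\cite{MR3936192}; the present paper does not prove it, so there is no in-paper argument to compare against. Judged on its own, your proposal has a fatal flaw at its foundation: the strengthened induction hypothesis, ``every matching of $K(Q_d)$ extends to a cycle factor,'' is false. Take the matching $M$ of $K(Q_d)$ that pairs up all $2^{d-1}$ even vertices with one another and leaves every odd vertex exposed. A cycle factor containing $M$ would have to use a set $F$ of $Q_d$-edges with degree~$1$ at every even vertex and degree~$2$ at every odd vertex; summing degrees over the two parity classes gives $2^{d-1}$ on one side and $2^{d}$ on the other, yet both sums must equal $|F|$ because every edge of $Q_d$ joins the two classes. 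So no such $F$ exists. This is exactly the observation made in Section~\ref{sec:results} (``not every matching of~$K(Q_d)$ can be extended to a Hamilton cycle, nor a cycle factor''), and Dvo\v{r}\'{a}k and Fink~\cite{MR3891930} show that even the parity-balanced relaxation to $B(Q_d)$ fails for $d\ge 9$. Since your subcube step necessarily introduces long auxiliary edges (the shortcuts $\{v_{2j-1},v_{2j}\}$ and $\{v_{2j-1}',v_{2j}'\}$), the recursion genuinely requires the $K(Q_{d-1})$ version of the statement, so it cannot be repaired merely by restricting the top-level input to matchings of~$Q_d$: the recursive calls will be fed matchings of $K(Q_{d-1})$ for which no cycle factor exists. (Concretely, applied to the counterexample above with an even crossing number, your even-$k$ step would ``succeed'' modulo recursive calls that must in fact fail.)

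Second, even setting this aside, you explicitly leave open the case you yourself identify as the crux: every direction carries an odd number of $M$-edges and the exposed set~$U$ is independent, a situation that already occurs for $d=3$. The fallback you sketch --- pair up~$U$ arbitrarily, apply Theorem~\ref{thm:kreweras}, delete the pairing edges, and ``reassemble the resulting paths into cycles by rotations'' --- is precisely where the difficulty of the Vandenbussche--West problem lives, and no argument is offered that the reassembly succeeds: the path endpoints all lie in the independent set~$U$, so they cannot be joined directly, and a rotation/exchange argument would have to be formulated and verified from scratch. As it stands, the proposal reduces the theorem to one false claim (the $K(Q_d)$ strengthening) plus one unproved claim (the reassembly), and therefore does not constitute a proof.
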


\subsection{Our results}
\label{sec:results}

In this paper we prove that every matching of~$Q_d$ extends to a single cycle~$C$.
However, $C$ possibly omits some vertices of~$Q_d$, i.e., $C$ is not necessarily a Hamilton cycle; see Figure~\ref{fig:rs}~(e).
Our theorem also holds in the stronger setting of the complete graph~$K(Q_d)$.

\begin{theorem}
\label{thm:cycle}
Every matching of~$K(Q_d)$, $d\ge 2$, can be extended to a cycle.
\end{theorem}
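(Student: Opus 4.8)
The plan is to prove, by induction on the dimension~$d\ge 2$ and working throughout in the complete graph~$K(Q_d)$ exactly as in Fink's proof of Theorem~\ref{thm:kreweras}, the sharper quantitative statement behind the abstract: every matching~$M$ of~$K(Q_d)$ extends to a cycle~$C$ (that is, $M\subseteq C$ and $C\setminus M\subseteq E(Q_d)$) with $|V(C)|\ge \tfrac{2}{3}\cdot 2^{d}$. Passing to~$K(Q_d)$ is essential for the usual reason --- it lets the induction hypothesis speak about matchings using arbitrary, not necessarily hypercube, edges, which is precisely the flexibility the reduction step creates --- and I expect one additionally has to carry along a small inductive handle, for instance the freedom to prescribe one non-$M$ hypercube edge that~$C$ must traverse; such a handle is essentially free (the target $\tfrac{2}{3}\cdot 2^{d}$ is very generous for small~$d$) but is what makes the later gluing work. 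The base cases are handled directly: small~$d$ by inspection, a perfect~$M$ by Theorem~\ref{thm:kreweras}, and a small~$M$ by the Dvo\v{r}\'{a}k--Fink theorem.

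For the inductive step, split~$Q_d$ along the last coordinate into $Q^0,Q^1\cong Q_{d-1}$ joined by the perfect matching~$E_d$ of direction-$d$ edges, and decompose $M=M_0\cup M_1\cup M_{01}$ into the edges inside~$Q^0$, the edges inside~$Q^1$, and the edges crossing between the two halves; let $U_i\subseteq V(Q^i)$ be the endpoints of~$M_{01}$ inside~$Q^i$, so $|U_0|=|U_1|=|M_{01}|$. Choosing the split coordinate to keep~$|M_{01}|$ small when possible, I would then use Fink's device: pick auxiliary matchings~$N_0$ on~$U_0$ and~$N_1$ on~$U_1$ (edges of $K(Q^0)$ and $K(Q^1)$, up to a parity leftover when~$|M_{01}|$ is odd that is handled separately), apply the induction hypothesis to $M_0\cup N_0$ in~$K(Q^0)$ and to $M_1\cup N_1$ in~$K(Q^1)$ to obtain cycles~$C_0,C_1$ with $|V(C_i)|\ge\tfrac{2}{3}\cdot 2^{d-1}$, and note $U_i\subseteq V(C_i)$ since $C_i\supseteq M_i\cup N_i$. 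Deleting $N_i$ from~$C_i$ turns~$C_i$ into $|N_i|$ vertex-disjoint paths with endpoint set~$U_i$; together these $|M_{01}|$ paths already contain $M_0\cup M_1$ and visit $|V(C_0)|+|V(C_1)|\ge\tfrac{2}{3}\cdot 2^{d}$ vertices, and their endpoints $U_0\cup U_1$ are paired up in one way by the paths themselves and in another way by~$M_{01}$.

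Threading the paths through the edges of~$M_{01}$ --- or, when $M_{01}=\emptyset$, performing instead a single splice between~$C_0$ and~$C_1$ along a parallel hypercube edge supplied by the handle together with two $E_d$-edges --- closes everything into one cycle~$C$ extending~$M$ exactly when the two perfect matchings on~$U_0\cup U_1$ (path endpoints, and~$M_{01}$) together form a single cycle rather than several. Guaranteeing this is the crux, and the step I expect to be hardest: one must choose~$N_0,N_1$, and set the handle on~$C_0$ and~$C_1$, so that after the recursive calls the pairing coming from the paths lies in the correct cyclic position relative to~$M_{01}$, which amounts to controlling how a choice of~$N_i$ propagates through the inductive construction --- the delicate bookkeeping already present in Fink's proof of Theorem~\ref{thm:kreweras}, here complicated by the cycles~$C_i$ not being Hamilton. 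Maintaining the $\tfrac{2}{3}$-fraction bound through the recursion is where the quantitative difficulty concentrates: one must keep all of~$M$ on the final cycle while ensuring that the reconnection and any local repairs omit no vertices beyond those already missing from~$C_0$ and~$C_1$, whose combined deficiency is at most $\tfrac{1}{3}\cdot 2^d$ by induction.
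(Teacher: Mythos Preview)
Your outline captures the Fink-style split-and-glue framework, but it misidentifies where the difficulty lies. The step you call the crux --- arranging that the paths from $C_0\setminus N_0$ and $C_1\setminus N_1$ together with~$M_{01}$ close up to a single cycle --- is in fact automatic once you follow Fink's actual device: one does not choose~$N_1$; it is \emph{forced} as the shortcut matching of the linear forest $(C_0\setminus N_0)\cup M_{01}$, and with that forced choice the union $(C_0\setminus N_0)\cup M_{01}\cup(C_1\setminus N_1)$ is always a single cycle. There is no ``delicate bookkeeping'' here, and the paper uses exactly this mechanism (Case~1 in the proof of Theorem~\ref{thm:forbidden}).

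The genuine obstacle is what you dismiss as ``a parity leftover when $|M_{01}|$ is odd that is handled separately''. When $|M_{01}|$ is odd the cycle must cross one extra time, along some edge~$uu^i$; but typically $u$ and~$u^i$ are already covered by~$M$, and after rerouting through~$uu^i$ one needs the inductive cycle in~$Q^0$ to \emph{avoid}~$u$ (and similarly~$u^i$ in~$Q^1$). Your proposed handle --- prescribing one hypercube edge the cycle must contain --- does not deliver this. The paper's handle is instead Theorem~\ref{thm:forbidden}: a matching that avoids a given vertex~$z$ extends to a cycle avoiding~$z$, subject to a necessary condition on half-layers. Theorem~\ref{thm:cycle} then reduces in a few lines to Theorem~\ref{thm:forbidden} (pick any uncovered vertex as~$z$), and essentially all the work moves into Theorem~\ref{thm:forbidden}, where one must choose the split direction and the extra crossing edge so that no half-layers obstruct the induction in the subcubes. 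Half-layers are the real obstructions throughout, and your outline does not mention them.

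A secondary point: folding the $\tfrac{2}{3}$ length bound into the induction is both unnecessary and fragile, since $N_1$ is forced and you have no control over~$|V(C_1)|$. The paper decouples this cleanly: once Theorem~\ref{thm:cycle} is established, enlarge~$M$ to a maximal matching of~$Q_d$, which has at least $2^d/3$ edges by Lemma~\ref{lem:max-Qd}, and apply Theorem~\ref{thm:cycle} to that. Note also that the $\tfrac{2}{3}$ bound is stated only for matchings of~$Q_d$ (Theorem~\ref{thm:long-Qd}); for~$K(Q_d)$ the paper proves~$\tfrac{1}{2}$ (Theorem~\ref{thm:long-KQd}).
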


Furthermore, we can give some guarantee about the length of the cycle obtained from our construction, namely that it visits a constant fraction of the vertex set~$V(Q_d)$.
This follows by using that any matching can be extended to a matching that contains a constant fraction of all vertices.

\begin{theorem}
\label{thm:long-Qd}
Every matching of~$Q_d$, $d\ge 2$, can be extended to a cycle of length at least~$\frac{2}{3}|V(Q_d)|=2^{d+1}/3$.
\end{theorem}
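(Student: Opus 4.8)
The plan is to prove a stronger statement by induction on~$d$, working throughout in the complete graph~$K(Q_d)$ and carrying along extra prescribed structure --- the same source of flexibility that underlies Fink's proof of Theorem~\ref{thm:kreweras}. Concretely, I would aim to show: \emph{every matching~$M$ of~$K(Q_d)$ together with a prescribed set~$F$ of hypercube edges disjoint from~$M$, such that $M\cup F$ has maximum degree at most two, extends to a cycle of length at least~$\tfrac{2}{3}2^d$ that uses every edge of~$F$}. The base cases (a handful of small~$d$) would be checked directly, if necessary by computer, and at the top level matchings of~$Q_d$ with few edges are already handled by the theorem of Dvo\v{r}\'{a}k and Fink, so one may assume the matching is reasonably large. (One could instead start from a cycle factor via Theorem~\ref{thm:vbwest} and merge its cycles, but controlling the \emph{length} of the result seems to force the recursive approach.)

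For the inductive step, fix a prescribed edge~$f\in F$, say in direction~$j$ (or any~$j$ if $F=\emptyset$). Split~$Q_d$ along a direction~$i\ne j$ into subcubes $Q^0,Q^1\cong Q_{d-1}$, so that~$f$ lies inside one of them, and write $M=M_0\cup M_1\cup M_\times$ with $M_0,M_1$ inside~$Q^0,Q^1$ and $M_\times$ the crossing edges; since each edge of~$M$ belongs to exactly one direction, some~$i\ne j$ keeps~$|M_\times|$ small. The crossing edges are eliminated by Fink's projection: the $Q^0$-endpoints of~$M_\times$ are uncovered by~$M_0$, so one pairs them into new edges which, added to~$M_0$, give a matching~$M_0'$ of~$K(Q^0)$ (these new edges need not be hypercube edges, which is exactly why the $K(Q_d)$ formulation is needed), and symmetrically~$M_1'$ on~$Q^1$, together with a record of how a cycle through the projected edges must be rerouted across direction~$i$ so as to traverse~$M_\times$. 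A short counting argument --- valid once~$d$ is not too small, the perfect-matching case being dealt with via Theorem~\ref{thm:kreweras} --- then produces a \emph{mirror pair} of edges $e\subseteq Q^0$ and $\bar e\subseteq Q^1$ in a common direction~$\ne i$ with $e\notin M_0'$, $\bar e\notin M_1'$, and $e,\bar e\notin F$. Applying the induction hypothesis to~$Q^0$ with prescribed edges $(F\cap E(Q^0))\cup\{e\}$ and to~$Q^1$ with $(F\cap E(Q^1))\cup\{\bar e\}$ yields cycles~$C_0\subseteq Q^0$ and~$C_1\subseteq Q^1$, extending~$M_0'$ and~$M_1'$, using the respective prescribed edges, and each of length at least~$\tfrac{2}{3}2^{d-1}$.

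To finish, fuse~$C_0$ and~$C_1$ across direction~$i$: delete~$e$ from~$C_0$ and~$\bar e$ from~$C_1$, insert the two direction-$i$ edges joining their endpoints, and perform the local rerouting that reinstates~$M_\times$. The result is a single cycle extending~$M$, using every edge of~$F$, of length $|V(C_0)|+|V(C_1)|\ge 2\cdot\tfrac{2}{3}2^{d-1}=\tfrac{2}{3}2^d$; Theorem~\ref{thm:long-Qd} is the case $F=\emptyset$. The part I expect to be genuinely hard is exactly this crossing-edge surgery and its interaction with the prescribed structure: making the projection yield honest matchings, coping with the parity of~$|M_\times|$ (an odd leftover crossing edge must be forced into~$C_0$, which is why the hypothesis should carry a set~$F$ rather than a single edge), extracting from Theorem~\ref{thm:kreweras} a Hamilton cycle through prescribed edges in the perfect case, and arranging every reroute so that it commutes with the fusion and discards essentially no vertices --- since losing a constant fraction of the vertices at any level would destroy the~$\tfrac{2}{3}$ bound. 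That constant is essentially whatever the base cases and these surgeries can be pushed to deliver; it is presumably not tight, the Ruskey-Savage conjecture predicting the true value to be~$1$.
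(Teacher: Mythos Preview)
The paper's proof is a two-line corollary and takes a completely different route: extend $M$ to a maximal matching $M'$ of $Q_d$, invoke the classical lower bound $|M'|\ge\frac{d}{3d-1}2^d\ge\frac{1}{3}2^d$ on maximal matchings in $Q_d$ (Lemma~\ref{lem:max-Qd}), and apply Theorem~\ref{thm:cycle} (every matching of $K(Q_d)$ extends to a cycle) to $M'$; the resulting cycle has length at least $2|M'|\ge\frac{2}{3}2^d$. All the hard work is confined to Theorem~\ref{thm:cycle}, proved elsewhere via Theorem~\ref{thm:forbidden}. In particular the constant $\tfrac{2}{3}$ is not an artifact of any fusion surgery but simply the known lower bound on maximal matchings of $Q_d$.

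The gap in your direct induction is the handling of crossing edges. As you describe it, you pair the $Q^0$-endpoints of $M_\times$ to form $M_0'$ and ``symmetrically'' the $Q^1$-endpoints to form $M_1'$, then run the induction independently on each side. But then the ``local rerouting that reinstates $M_\times$'' need not exist: if your $Q^0$-pairing matches $a$ with $b$ (for crossing edges $aa',bb'\in M_\times$) while your $Q^1$-pairing matches $a'$ with some $c'\neq b'$, no local surgery on $C_0\cup C_1$ reinstates both $aa'$ and $bb'$ and keeps a single cycle. Fink's projection is \emph{sequential}, not symmetric: build $C_0$ first, then \emph{define} $P_1$ as the shortcut edges of the linear forest $(C_0\setminus P_0)\cup M_\times$, and only then build $C_1$ extending $M_1\cup P_1$. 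With that ordering $(C_0\setminus P_0)\cup M_\times\cup(C_1\setminus P_1)$ is automatically a single cycle whenever $|M_\times|\ge 2$, so the mirror pair --- and hence your prescribed-edge set $F$ --- is superfluous there; and when $|M_\times|$ is odd one cannot ``force the leftover crossing edge into $C_0$'', since that edge lies in direction $i$ and does not belong to $Q^0$ at all --- one must instead manufacture an extra direction-$i$ edge, which is precisely the delicate content of the paper's Theorem~\ref{thm:forbidden}. Note finally that your strengthened hypothesis asserts the $\tfrac{2}{3}$ bound for arbitrary matchings of $K(Q_d)$, which the paper does \emph{not} establish (Theorem~\ref{thm:long-KQd} gives only $\tfrac{1}{2}$), so already the statement you would be inducting on is beyond what is known.
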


\begin{theorem}
\label{thm:long-KQd}
Every matching of~$K(Q_d)$, $d\ge 2$, can be extended to a cycle of length at least~$\frac{1}{2}|V(Q_d)|=2^{d-1}$.
\end{theorem}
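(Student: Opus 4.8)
The plan is to deduce Theorem~\ref{thm:long-KQd} from Theorem~\ref{thm:cycle} by a short \emph{padding} argument. The crucial and entirely elementary observation is that if a matching~$M$ of~$K(Q_d)$ extends to a cycle~$C$, then~$C$ contains every edge of~$M$ and hence visits all~$2|M|$ endpoints of~$M$; in particular~$C$ has length at least~$2|M|$. Thus a cycle extension is automatically long whenever the matching is large, and it suffices to first enlarge the given matching to one covering at least~$2^{d-1}$ vertices, while adding only \emph{hypercube} edges, so that any extension of the enlarged matching is still an extension of the original~$M$.

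Concretely, given a matching~$M$ of~$K(Q_d)$, I would build~$M'\supseteq M$ greedily: as long as there are two vertices~$u,v$ uncovered by the current matching with~$uv\in E(Q_d)$, add the edge~$uv$. This terminates with a matching~$M'$ of~$K(Q_d)$ such that~$M'\setminus M\subseteq E(Q_d)$ and such that the set~$U'$ of vertices uncovered by~$M'$ spans no edge of~$Q_d$, i.e.\ $U'$ is an independent set of~$Q_d$. Since~$Q_d$ has a perfect matching with~$2^{d-1}$ edges, every independent set of~$Q_d$ has at most~$2^{d-1}$ vertices, so~$|U'|\le 2^{d-1}$ and~$M'$ covers at least~$2^d-2^{d-1}=2^{d-1}$ vertices. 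Now Theorem~\ref{thm:cycle} applied to~$M'$ yields a cycle~$C$ with~$M'\subseteq C$ and~$C\setminus M'\subseteq E(Q_d)$; this~$C$ visits all of the~$\ge 2^{d-1}$ endpoints of~$M'$, and since~$M\subseteq M'\subseteq C$ and~$C\setminus M\subseteq (C\setminus M')\cup(M'\setminus M)\subseteq E(Q_d)$, the cycle~$C$ also extends~$M$. That would complete the proof.

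I do not anticipate a real obstacle in this deduction --- essentially all of the work sits in Theorem~\ref{thm:cycle}, which we may assume. The only points needing care are that the padding uses hypercube edges exclusively, so that the length bound transfers back to~$M$, and that the bound ``independent sets of~$Q_d$ have size~$\le 2^{d-1}$'' coincides with the target value~$2^{d-1}$ with no slack; this is exactly why the statement gives the fraction~$\tfrac12$ rather than something larger. Pushing the fraction up in the~$K(Q_d)$ setting would require reasoning about the internal structure of the cycle produced by Theorem~\ref{thm:cycle} instead of using it as a black box, since when the vertices uncovered by~$M$ already form a colour class, this padding argument provably cannot guarantee more than~$2^{d-1}$ vertices.
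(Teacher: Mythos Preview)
Your proof is correct and essentially identical to the paper's: both extend~$M$ greedily by hypercube edges to a matching~$M'$ whose uncovered set is independent in~$Q_d$ (this is exactly the paper's notion of a \emph{maximal} matching of~$K(Q_d)$), bound~$|M'|\ge 2^{d-2}$ via the independence number of~$Q_d$, and then apply Theorem~\ref{thm:cycle}. You are in fact slightly more careful than the paper in spelling out that the padding must use only edges of~$Q_d$ so that an extension of~$M'$ remains an extension of~$M$.
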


Our results are another step towards the Ruskey-Savage conjecture, and we want to point out an analogy to the \defi{middle levels conjecture}, which asserts that the subgraph of~$Q_{2d+1}$ induced by all sets of size~$d$ or~$d+1$ admits a Hamilton cycle.
Historically, Felsner and Trotter~\cite{MR1350586}, Savage and Winkler~\cite{MR1329390}, and Johnson~\cite{MR2046083} first established the existence of long cycles in this graph, and their work subsequently led to a proof of the middle levels conjecture~\cite{MR3483129}.

We also note that not every matching of~$K(Q_d)$ can be extended to a Hamilton cycle, nor a cycle factor.
In fact, any matching in~$K(Q_d)$ between all vertices of the same parity has the property that any cycle extending it includes only half of the vertices from the other parity, so it has only length~$\frac{3}{4}|V(Q_d)|$.
The \defi{parity} of a vertex~$u$ of~$Q_n$ is the parity of~$|u|$, i.e., of the size of the set~$u$.
In particular, in~$K(Q_2)$, the matching with the single edge~$(\emptyset,\{1,2\})$ can only be extended to a cycle of length~3.

Clearly, if a matching~$M$ of~$K(Q_d)$ is extendable to a Hamilton cycle, then $M$ has the same number of vertices of each parity.
Note that every matching of~$B(Q_d)$ satisfies this condition, where~$B(Q_d)$ is the complete bipartite graph obtained from~$Q_d$ by adding all edges between vertices of opposite parity.
However, Dvo\v{r}\'{a}k and Fink~\cite{MR3891930} showed that this condition is not sufficient, by constructing a matching of~$B(Q_d)$ for $d \ge 9$ which cannot be extended to a Hamilton cycle, nor a cycle factor.

To prove Theorem~\ref{thm:cycle} inductively, we use the following auxiliary theorem, which yields a cycle that extends a given matching but also avoids one `forbidden' vertex~$z$, i.e., avoiding~$z$ is an additional constraint imposed on the cycle.
Using that $Q_d$ is vertex-transitive, we may assume w.l.o.g.\ that the forbidden vertex is $z=\emptyset$.
The following theorem requires some additional conditions, and to state them we need to introduce some notation:
We split $Q_d$ in a \defi{direction}~$i \in [d]$ into subgraphs~$Q^i_0$ and $Q^i_1$, where $Q^i_1$ is the subgraph induced by vertices of~$Q_d$ containing~$i$ and $Q^i_0$ is the subgraph induced by vertices of~$Q_d$ not containing~$i$.
Note that $z \in V(Q^i_0)$ for every $i \in [d]$.
The set of edges of~$Q_d$ that have one end vertex in~$Q^i_0$ and the other in~$Q^i_1$ is called a \defi{layer} in direction~$i$.
This set splits into two equal-sized sets of edges, depending on the parity of their end vertices in~$Q^i_0$, and we refer to each of these two sets of edges as a \defi{half-layer}.
In this way, all $d2^{d-1}$ edges of~$Q_d$ are partitioned into $d$ many layers each of cardinality~$2^{d-1}$, or $2d$ many half-layers, each of cardinality~$2^{d-2}$.

\begin{theorem}
\label{thm:forbidden}
Let $d \ge 5$. A matching~$M$ of $K(Q_d)$ that avoids $z=\emptyset$ can be extended to a cycle that avoids~$z$ if and only if it satisfies the following condition:
\begin{itemize}[leftmargin=6mm, noitemsep, topsep=1pt plus 1pt]
\item[\propH{}] for every $i \in [d]$, if $M$ contains a half-layer in direction~$i$, then there is a vertex~$u\in V(Q^i_0) \setminus\{z\}$ that is avoided by~$M$.
\end{itemize}
\end{theorem}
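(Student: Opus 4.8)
The plan is to prove the two directions separately, with the ``only if'' direction being the routine one and the ``if'' direction carrying the real content via induction on~$d$, in the spirit of Fink's proof of Theorem~\ref{thm:kreweras}.

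For the necessity of~\propH{}, suppose $M$ contains a half-layer~$H$ in some direction~$i$ but every vertex of~$V(Q^i_0)\setminus\{z\}$ is matched by~$M$. One of the two half-layers in direction~$i$ consists of the edges of the layer whose endpoint in~$Q^i_0$ has odd size; wlog $H$ is this one, so $H$ already covers every odd vertex of~$Q^i_0$. Since every even vertex of $V(Q^i_0)\setminus\{z\}$ is also matched by~$M$, and $M$ is a matching, each such even vertex $v$ is matched to a vertex of $Q^i_1$ (it cannot be matched inside $Q^i_0$ because all odd vertices there are already used by $H$, unless the partner lies in the other half-layer — but then $v$ is still sent into~$Q^i_1$). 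Hence in any cycle~$C$ extending~$M$ and avoiding~$z$, every vertex of~$Q^i_0$ other than~$z$ has at least one of its two cycle-edges crossing to~$Q^i_1$; but $z$ itself is absent, so $C$ uses an odd number of crossing edges out of the $2^{d-1}-1$ vertices of $V(Q^i_0)\setminus\{z\}$ counted with multiplicity — more carefully, the number of edges of $C$ crossing direction~$i$ must be even, yet each vertex of $V(Q^i_0)\setminus\{z\}$ contributes at least one crossing edge and $|V(Q^i_0)\setminus\{z\}|=2^{d-1}-1$ is odd, forcing at least one vertex to contribute two crossing edges, which is fine; the actual contradiction is a counting/parity argument I would work out by comparing $|M\cap H|$ with the number of available non-crossing edges inside~$Q^i_0$. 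I expect a clean formulation: with the half-layer in $M$ and no free vertex in $Q^i_0$, the bipartite-type parity obstruction from the last paragraph of the introduction applies verbatim and no extending cycle can avoid~$z$.

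For sufficiency, I would induct on~$d$, the base case $d=5$ being handled by a finite (computer-assisted or hand) analysis, or by a direct ad hoc construction. For the inductive step, given~$M$ on $K(Q_d)$ satisfying \propH{} and avoiding $z=\emptyset$, first choose a splitting direction~$i\in[d]$; the key is to pick~$i$ so that $M$ does not contain a full half-layer in direction~$i$, or if it must, so that the free vertex guaranteed by \propH{} can be exploited. Split $Q_d=Q^i_0\cup Q^i_1$. The matching~$M$ induces matchings $M_0,M_1$ on the two sides together with some crossing edges; using Theorem~\ref{thm:kreweras} (or its proof producing many Hamilton cycles) on~$Q^i_1\cong Q_{d-1}$, we extend~$M_1$ plus the crossing-edge stubs to a Hamilton cycle of~$Q^i_1$, while on~$Q^i_0\cong Q_{d-1}$ we apply the induction hypothesis (Theorem~\ref{thm:forbidden} in dimension $d-1$) to get a cycle avoiding $z=\emptyset$ that extends~$M_0$ together with the stubs of crossing edges. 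One then splices these along a pair of parallel hypercube edges in direction~$i$ to form a single cycle in~$Q_d$ avoiding~$z$. The bookkeeping is that the crossing edges of~$M$ must be handled as forced path segments and that the auxiliary matchings fed to the two sub-instances still satisfy the respective hypotheses; this is exactly where the abundance of Hamilton cycles from Fink's construction gives the flexibility to choose a splicing pair disjoint from~$M$ and to route around the forbidden vertex.

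The main obstacle will be the inductive step when \propH{} is ``tight'' — that is, when for the only convenient splitting directions $M$ does contain a half-layer, so the sub-instance on~$Q^i_0$ has essentially no slack and the guaranteed free vertex~$u$ must simultaneously serve as the attachment point for the splicing and satisfy the dimension-$(d-1)$ version of~\propH{}. Verifying that one can always rotate/choose the direction~$i$ (there are~$d$ of them and the matching has bounded ``width'' in most directions by a counting argument) so that the recursion stays inside the hypotheses is the crux; I would isolate this as a separate lemma asserting that some direction~$i$ is ``good,'' proved by a pigeonhole on the $d$ directions against the at most $2^{d-1}$ edges of~$M$, plus a short case analysis for the few directions where a half-layer is present. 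The base case $d=5$ and possibly $d=6$ I expect to require a computer search, which is standard for this line of work.
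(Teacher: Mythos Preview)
Your overall architecture---induction on~$d$, split along a direction~$i$, apply the inductive statement on~$Q^i_0$ to avoid~$z$ and a companion result on~$Q^i_1$, computer base case at $d=5$---matches the paper.  But several concrete steps fail or are missing.

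\textbf{Forward implication.}  Your claim that every even vertex of~$Q^i_0\setminus\{z\}$ must be matched into~$Q^i_1$ is wrong: such a vertex can be matched to another even vertex of~$Q^i_0$, or (since~$M$ lives in~$K(Q_d)$) anywhere at all.  The correct argument is short and purely about~$C\setminus M$: every odd vertex of~$Q^i_0$ is an endpoint of a half-layer edge, hence its other cycle-edge (which lies in~$Q_d$) goes to an even vertex of~$Q^i_0$; since all vertices of~$Q^i_0\setminus\{z\}$ are covered by~$M$, these even partners are pairwise distinct, so~$C$ meets all~$2^{d-2}$ odd and at least~$2^{d-2}$ even vertices of~$Q^i_0$---but~$z$ is even and avoided, a contradiction.

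\textbf{Reverse implication.}  Two genuine gaps.  First, on~$Q^i_1$ you cannot invoke Theorem~\ref{thm:kreweras}: the matching you feed in (namely~$M^i_1$ together with shortcut edges coming from the crossing paths) is almost never perfect.  The paper uses Theorem~\ref{thm:cycle} here, and Theorems~\ref{thm:cycle} and~\ref{thm:forbidden} are proved by \emph{simultaneous} induction---each induction step for one calls the other in dimension~$d-1$.  Your plan never recognises this dependency.

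Second, and more seriously, you never address the parity of~$|M^i_-|$.  When~$|M^i_-|$ is even, the shortcut construction goes through more or less as you sketch.  When~$|M^i_-|$ is odd, however, the cycle must use one \emph{additional} direction-$i$ edge~$uu^i$ not in~$M$, and choosing this edge so that the resulting modified matchings on~$Q^i_0$ and~$Q^i_1$ still satisfy the hypotheses of the inductive calls (no half-layers, property~\propH{} with the right avoided vertex) is the technical heart of the proof.  The paper spends most of its effort here: selecting~$i$ via rules involving quad-layers and covered near quad-layers (not a plain pigeonhole), selecting~$u$ via further rules, and then a case analysis (cases~a/b, bi/bii/biii) to certify that the sub-instances remain valid.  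Your ``splicing along a pair of parallel edges'' and the appeal to the abundance of cycles in Fink's construction do not substitute for this: the flexibility is needed \emph{before} you build the two sub-cycles, in setting up the matchings, not after.
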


We remark that the statement of Theorem~\ref{thm:forbidden} is true for $d=2$ and~$d=3$, but false for $d=4$, and all counterexamples can be seen in~\cite[Figs.~7--9]{MR3830138}.

In the following we will repeatedly refer to the condition on~$M$ stated in Theorem~\ref{thm:forbidden} as \defi{property~\propH{}}.
Note that this property depends on~$Q_d$, $M$ and~$z$, but for simplicity we do not make this dependence explicit, as the context should always be clear later when we apply the theorem to subcubes~$Q^i_0$ and~$Q^i_1$ for some~$i\in[d]$, certain matchings in those subcubes, and certain vertices to avoid in them.
Note that the vertex~$u$ must have the same (even) parity as~$z$, as all vertices in~$Q^i_0$ of the opposite (odd) parity are covered by the half-layer.

We prove Theorems~\ref{thm:cycle} and \ref{thm:forbidden} by induction on the dimension~$d$.
Specifically, in the induction step we assume that both statements hold in dimension~$d-1$, and we use this to prove that they also hold in dimension~$d$.

\subsection{Applications to Hamilton-laceability}

The hypercube~$Q_d$ is \defi{Hamilton-laceable}, i.e., it admits a Hamilton path between any two prescribed end vertices of opposite parity~\cite{MR527987}.
Gregor, Novotn\'{y}, and {\v{S}}krekovski~\cite{MR3830138} considered laceability combined with matching extensions.
Specifically, they considered the problem of extending a perfect matching of~$Q_d$ to a Hamilton path between two prescribed end vertices with opposite parity.
Their proof works in the more general setting of the complete bipartite graph~$B(Q_d)$.
For a matching~$M$ of~$Q_d$ and one of its vertices~$x$, we write $x^M$ for the other end vertex of the edge of~$M$ incident with~$x$.

\begin{theorem}[{\cite[Thm.~2]{MR3830138}}]
\label{thm:Hamlace1-BQd}
Let $d\ge 5$, and let $x, y$ be two vertices of opposite parity in~$Q_d$.
A perfect matching~$M$ of~$B(Q_d)$ with $xy\notin M$ can be extended to a Hamilton path with end vertices~$x$ and~$y$ if and only if $(M\setminus\{xx^M,yy^M\})\cup\{x^My^M\}$ contains no half-layers.
\end{theorem}

The following is an equivalent reformulation of this theorem.

\begin{theorem}[{\cite[Thm.~3]{MR3830138}}]
\label{thm:Hamlace2-BQd}
Let $d \ge 5$, and let $x, y$ be two vertices of opposite parity in~$Q_d$.
A perfect matching~$M$ of $B(Q_d \setminus\{x,y\})$ can be extended to a cycle that avoids~$x$ and~$y$ if and only if $M$ contains no half-layers.
\end{theorem}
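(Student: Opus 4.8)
The plan is to derive this theorem from Theorem~\ref{thm:Hamlace1-BQd} by the standard trick of converting a Hamilton path into a cycle that avoids the two path end vertices. Given the vertices $x,y$ of opposite parity and a perfect matching $M$ of $B(Q_d\setminus\{x,y\})$ containing no half-layers, I first want to pick appropriate neighbours of $x$ and $y$ in $Q_d$ to serve as the end vertices of a Hamilton path on the remaining $2^d-2$ vertices. Concretely, I would choose a neighbour $x'$ of $x$ in $Q_d$ and a neighbour $y'$ of $y$ in $Q_d$ such that $x'$ and $y'$ are distinct and have opposite parity, and such that adding the two edges $xx'$ and $yy'$ to $M$ keeps the matching property; since $x$ and $y$ are uncovered by $M$ (they are not vertices of $Q_d\setminus\{x,y\}$) the only obstruction is that $x'$ or $y'$ might already be covered by $M$, or that the new matching $M':=M\cup\{xx',yy'\}$ might now contain a half-layer. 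A cycle $C$ avoiding $x$ and $y$ that extends $M$ corresponds exactly to a Hamilton path $P$ of $Q_d\setminus\{x,y\}$ extending $M\setminus\{xx',yy'\}$ with end vertices $x'$ and $y'$: one obtains $C$ from $P$ by appending the edges $x'x$, $xy$? — no, rather one deletes $x$ and $y$ from $C$ together with the four incident edges $xx',xx'',yy',yy''$ and reconnects, but the cleanest route is to go through Theorem~\ref{thm:Hamlace1-BQd} applied to the matching $M'$ on $B(Q_d)$ with the prescribed end vertices being a suitably chosen pair.

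Let me set this up more carefully. Apply Theorem~\ref{thm:Hamlace1-BQd} with the roles of the end vertices played by two neighbours $a$ of $x$ and $b$ of $y$, and with the perfect matching of $B(Q_d)$ taken to be $M\cup\{xa',yb'\}$ for appropriate choices — the point is that a Hamilton path $P$ of $Q_d$ from $x$ to $y$ in which $x$ has degree~$1$ and $y$ has degree~$1$, with $P$ extending $M\cup\{xx',yy'\}$ where $xx',yy'$ are the first and last edges of $P$, yields after deleting $x$ and $y$ a Hamilton path of $Q_d\setminus\{x,y\}$ from $x'$ to $y'$; conversely such a path together with the edges $x'x$ and $y'y$ — wait, that gives a path not a cycle. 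The correct correspondence for producing a \emph{cycle} avoiding $x,y$: take a Hamilton path $P$ of $Q_d\setminus\{x,y\}$ extending $M$, with end vertices $x'$ and $y'$ chosen so that $x'\sim y'$ is \emph{not} required; instead close $P$ into a cycle by adding a single edge $x'y'$ — but $x'y'$ need not be an edge of $Q_d$, however it \emph{is} an edge of $B(Q_d\setminus\{x,y\})$ precisely when $x'$ and $y'$ have opposite parity, so we may insist $x'y'\in M$. Thus: choose the edge $x'y'\in M$ with $x'\sim x$ and $y'\sim y$ in $Q_d$; then a cycle $C$ avoiding $x,y$ extending $M$ is the same as a Hamilton path of $Q_d\setminus\{x,y\}$ from $x'$ to $y'$ extending $M\setminus\{x'y'\}$. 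Now apply Theorem~\ref{thm:Hamlace1-BQd} to $Q_d\setminus\{x,y\}$? — that graph is not a hypercube. So instead I lift everything to $Q_d$: the path from $x'$ to $y'$ in $Q_d\setminus\{x,y\}$ extends, via prepending $xx'$ and appending $yy'$, to a Hamilton path of $Q_d$ from $x$ to $y$ extending the perfect matching $M_\star:=(M\setminus\{x'y'\})\cup\{xx',yy'\}$ of $B(Q_d)$, provided $M_\star$ is indeed a matching, which it is since $x,y$ were uncovered by $M$. Theorem~\ref{thm:Hamlace1-BQd} then says this is possible iff $M_\star\setminus\{xx',yy'\}\cup\{x'y'\}=M$ contains no half-layers — exactly the hypothesis. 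Conversely, a cycle $C$ avoiding $x,y$ extending $M$ contains the edge $x'y'$ for some edge $x'y'\in M$, and since $C\setminus M\subseteq Q_d$, the two neighbours of $x'$ on $C$ other than $y'$... one checks that $x'$ and $y'$ can be taken adjacent to $x$ and $y$ respectively after possibly rerouting, but more simply: any cycle $C$ in $Q_d$ avoiding $x,y$ has all its edges in $Q_d$ except those of $M$, and one reverses the above construction.

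The main subtlety — and the step I expect to require the most care — is the bookkeeping around \emph{which} edge of $M$ plays the role of $x'y'$ and verifying that the "no half-layers'' condition transfers correctly in both directions, i.e.\ that $M$ has no half-layers iff $M_\star\setminus\{xx',yy'\}\cup\{x'y'\}$ has no half-layers; these are literally the same set $M$, so this is immediate, but one must make sure the correspondence $C\leftrightarrow P$ is exhaustive, namely that every extending cycle really does use an edge of $M$ joining a neighbour of $x$ to a neighbour of $y$ — this need not hold if $M$ happens to have no such edge at all, in which case one should instead use the "no half-layers'' slack to route freely, and here I would simply invoke Theorem~\ref{thm:Hamlace1-BQd} directly in its path formulation with end vertices that are \emph{any} fixed neighbours $x'$ of $x$ and $y'$ of $y$ of opposite parity, noting that such a pair exists for $d\ge 5$ and that adding $xx',yy'$ to $M$ then creates a half-layer only if $M$ already contained "most of'' one, a case that can be excluded or handled separately. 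Since the authors explicitly call Theorem~\ref{thm:Hamlace2-BQd} an "equivalent reformulation'' of Theorem~\ref{thm:Hamlace1-BQd}, I expect the intended proof is exactly this dictionary between Hamilton paths with prescribed opposite-parity endpoints and cycles avoiding two vertices, and the whole argument should be a page of careful but routine case analysis on the position of $x,y$ relative to $M$.
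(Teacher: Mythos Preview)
The paper does not actually prove Theorem~\ref{thm:Hamlace2-BQd}; it merely quotes it from~\cite{MR3830138} and asserts (without argument) that it is ``an equivalent reformulation'' of Theorem~\ref{thm:Hamlace1-BQd}. So there is no detailed proof to compare against---the intended content is precisely the dictionary between Hamilton paths from~$x$ to~$y$ extending a perfect matching of~$B(Q_d)$ and cycles avoiding~$x,y$ extending a perfect matching of~$B(Q_d\setminus\{x,y\})$, which is the idea you eventually converge on.

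Your core correspondence is right, but you impose an unnecessary constraint that creates all the difficulties in your final paragraph. You require the chosen edge $x'y'\in M$ to satisfy $x'\sim x$ and $y'\sim y$ \emph{in~$Q_d$}. This is not needed: the edges $xx'$ and $yy'$ that you prepend and append lie in the matching $M_\star=(M\setminus\{x'y'\})\cup\{xx',yy'\}$, so when you ask for a Hamilton path extending~$M_\star$, these two edges are matching edges and are allowed to be arbitrary edges of~$B(Q_d)$---they do \emph{not} have to be hypercube edges. All you need is that $x'$ has parity opposite to~$x$ and $y'$ opposite to~$y$, and since $x'y'\in M\subseteq B(Q_d\setminus\{x,y\})$ the pair $x',y'$ already has opposite parities, so after relabelling this holds for \emph{any} edge of~$M$. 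With this observation, your map $M\leftrightarrow M_\star$ is a clean bijection, the identity $M_\star\setminus\{xx',yy'\}\cup\{x'y'\}=M$ makes the half-layer conditions match exactly, and the worry ``$M$ might have no such edge at all'' evaporates. The converse direction (cycle implies no half-layers) then also follows formally from the converse in Theorem~\ref{thm:Hamlace1-BQd} via the same bijection, so no separate argument is required.
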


In this theorem, $Q_d\setminus\{x,y\}$ represents the graph obtained from~$Q_d$ by removing the two vertices~$x$ and~$y$.

The authors also conjectured the strengthenings of Theorems~\ref{thm:Hamlace1-BQd} and~\ref{thm:Hamlace2-BQd} obtained by replacing $B(Q_d)$ and~$B(Q_d\setminus\{x,y\})$ by $K(Q_d)$ and $K(Q_d\setminus\{x,y\})$, respectively.
In fact, these stronger versions follow easily from our Theorem~\ref{thm:forbidden}, settling the conjecture raised by Gregor, Novotn\'{y}, and {\v{S}}krekovski.

\begin{theorem}
\label{thm:Hamlace1-KQd}
Let $d\ge 5$, and let $x, y$ be two vertices of opposite parity in~$Q_d$.
A perfect matching~$M$ of~$K(Q_d)$ with $xy\notin M$ can be extended to a Hamilton path with end vertices~$x$ and~$y$ if and only if $(M\setminus\{xx^M,yy^M\})\cup\{x^My^M\}$ contains no half-layers.
\end{theorem}

We prove the following equivalent reformulation of this theorem.

\begin{theorem}
\label{thm:Hamlace2-KQd}
Let $d \ge 5$, and let $x, y$ be two vertices of opposite parity in~$Q_d$.
A perfect matching~$M$ of $K(Q_d \setminus\{x, y\})$ can be extended to a cycle that avoids~$x$ and~$y$ if and only if $M$ contains no half-layers.
\end{theorem}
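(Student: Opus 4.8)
I would prove the two directions separately, obtaining necessity from a short counting argument and sufficiency by reducing to Theorem~\ref{thm:forbidden} one dimension down. By vertex-transitivity I may assume $y=\emptyset$. The key starting observation is that a cycle $C$ extending $M$ contains $M$, so here, since $M$ is a \emph{perfect} matching of $K(Q_d\setminus\{x,y\})$, every such $C$ is a Hamilton cycle of $Q_d\setminus\{x,y\}$ with $C\setminus M\subseteq Q_d$.

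\emph{Necessity.} Suppose $M$ extends to a cycle $C$ avoiding $x$ and $y$, and assume for contradiction that $M$ contains a half-layer $L$ in some direction $j$. Let $A$ be the set of $2^{d-1}$ vertices covered by $L$, which consists of one parity class of $Q^j_0$ together with the opposite parity class of $Q^j_1$. Since $L\subseteq M$ avoids $\emptyset$, the unique even vertex of $Q^j_0$, the class of $Q^j_0$ covered by $L$ must be the odd one, and since $L$ also avoids the odd vertex $x$ this forces $j\in x$, so $x\in Q^j_1$ lies in the uncovered odd class; in particular $x,y\notin A$. Now every edge of $C$ with both ends in $A$ is an edge of $M$ or of $Q_d$, and both possibilities coincide with the edges of $L$ (same-parity pairs span no hypercube edge, and $M$ restricted to $A$ equals $L$ because $L$ is a perfect matching of $A$ contained in the matching $M$). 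Hence $C[A]=L$, so each of the $2^{d-1}$ vertices of $A$ sends exactly one edge of $C$ out of $A$; but $V(C)\setminus A$ has only $2^{d-1}-2$ vertices, and each of them is matched by $M\subseteq C$ inside $V(C)\setminus A$, so it can absorb at most $2^{d-1}-2<2^{d-1}$ such edges --- a contradiction.

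\emph{Sufficiency.} Assume $M$ has no half-layer. Pick a coordinate $i\in x$ and split $Q_d$ in direction $i$, so that $y\in Q^i_0$ and $x\in Q^i_1$, with $Q^i_0\cong Q^i_1\cong Q_{d-1}$; under the natural identifications the two vertices to be avoided become $\emptyset\in Q^i_0$ and $x\setminus\{i\}\in Q^i_1$, both of even parity. Write $M=M_0\sqcup M_1\sqcup M_{01}$ for the edges of $M$ inside $Q^i_0$, inside $Q^i_1$, and crossing direction $i$; since $|V(Q^i_0)\setminus\{y\}|=2^{d-1}-1$ is odd, $|M_{01}|$ is odd, in particular positive. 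I would then ``internalise'' the crossing edges by the standard re-routing used in the inductive proof of Theorem~\ref{thm:forbidden}: replace $M_{01}$ by auxiliary edges lying inside the two halves in positions parallel across the layer, keeping one crossing edge and one hypercube layer edge as ``bridges'', so as to obtain matchings $M_0'$ and $M_1'$ of $K(Q^i_0)$ and $K(Q^i_1)$ that avoid $\emptyset$ and $x\setminus\{i\}$ respectively and still contain no half-layer; property~\propH{} then holds vacuously in dimension $d-1$. Applying Theorem~\ref{thm:forbidden} in dimension $d-1$ to each half gives path systems (obtained from the cycles $C_0\supseteq M_0'$, $C_1\supseteq M_1'$ by deleting the auxiliary edges) covering $Q^i_0\setminus\{y\}$ and $Q^i_1\setminus\{x\}$ and containing $M_0$, $M_1$; finally, stitching these together using the crossing edges of $M_{01}$ and the bridge yields a single cycle $C$ with $M\subseteq C$, $C\setminus M\subseteq Q_d$, avoiding both $x$ and $y$.

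The main obstacle is the sufficiency direction, and within it the re-routing and recombination bookkeeping: the auxiliary edges must be chosen so that (i) the two derived matchings contain no half-layer and leave uncovered exactly the prescribed even-parity vertices (together with at most one further vertex handled by a bridge), so that Theorem~\ref{thm:forbidden} is genuinely applicable in dimension $d-1$, and (ii) they sit in parallel positions forcing $C_0$ and $C_1$ to share matching blocks across the layer, so that undoing the re-routing and adding the crossing edges produces one cycle rather than several --- this parity/connectivity orchestration (made delicate by the fact that $|M_{01}|$ is odd) is exactly the technical core. A secondary but real complication is the base case $d=5$, where the two halves are copies of $Q_4$ and Theorem~\ref{thm:forbidden} fails for some matchings; here one checks, using the explicit list of exceptions in~\cite{MR3830138}, that the near-perfect instances $M_0',M_1'$ arising in the construction are never exceptional, or else treats $d=5$ directly.
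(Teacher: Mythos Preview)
Your necessity argument is correct (apart from the slip ``the unique even vertex of~$Q^j_0$'': there are $2^{d-2}$ of them, but $\emptyset$ is one of them, which is all you use). It is essentially the same parity/counting argument the paper gives for the forward implication of Theorem~\ref{thm:forbidden}, just rephrased for the two-vertex setting.

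Your sufficiency direction, however, is taking a needlessly hard route and runs into the obstacles you yourself flag. You propose to split in a direction~$i$, apply Theorem~\ref{thm:forbidden} in dimension~$d-1$ to each half, and then perform the ``re-routing and recombination bookkeeping'' that you correctly identify as the technical core of the inductive proof of Theorem~\ref{thm:forbidden}. In effect you are re-deriving Theorem~\ref{thm:forbidden} rather than using it. This creates two real problems: the case $d=5$ forces you into $Q_4$, where Theorem~\ref{thm:forbidden} is false, and the odd-$|M_{01}|$ stitching requires exactly the delicate analysis of Sections~\ref{sec:forbidden} that Theorem~\ref{thm:forbidden} is meant to encapsulate.

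The paper's proof bypasses all of this by applying Theorem~\ref{thm:forbidden} directly in dimension~$d$. With (say) $x=\emptyset$, the hypothesis ``$M$ contains no half-layer'' makes property~\propH{} vacuous, so Theorem~\ref{thm:forbidden} yields a cycle~$C$ extending~$M$ and avoiding~$x$. Since $M$ is a \emph{perfect} matching of $K(Q_d\setminus\{x,y\})$, Lemma~\ref{lem:avoid2} immediately gives that $C$ also avoids the opposite-parity vertex~$y$. That is the whole argument: two lines, no splitting, no base-case headaches. The key observation you missed is that once a cycle extending a perfect matching of $K(Q_d\setminus\{x,y\})$ avoids one of the two missing vertices, it automatically avoids the other by parity.
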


\subsection{Avoiding and including other structures}

\begin{figure}
\centerline{
\includegraphics[page=2]{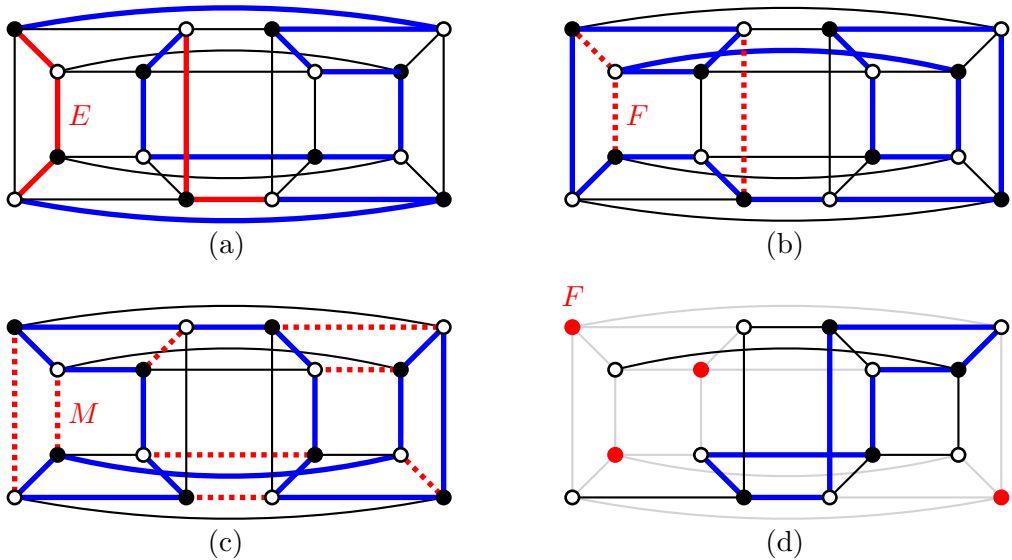}
}
\caption{Illustration of Hamilton cycles in~$Q_d$, $d=4$, subject to various constraints:
(a)~a set~$E$ of $2d-3=5$ edges and a Hamilton cycle extending it;
(b)~a set~$F$ of $2d-5=3$ edges and a Hamilton cycle avoiding it;
(c)~a perfect matching~$M$ and a Hamilton cycle avoiding it;
(d)~a set~$F$ of $\binom{d}{2}-2=4$ vertices and a cycle of length~$2^d-2|F|=8$ avoiding it.}
\label{fig:ham}
\end{figure}

In the literature, there has also been substantial work on extending sets of edges~$E$ other than matchings to Hamilton cycles.
In addition, there is interest in avoiding certain sets~$F$ of `forbidden' edges.
These questions are motivated by applications in computer networks, where the hypercube is frequently used as a network topology with a number of desirable properties, such as small degree and diameter.
In this context, including or avoiding certain edges corresponds to prescribed connections or faulty connections, respectively.

Dvo\v{r}\'{a}k~\cite{MR2178189} showed that for any set~$E$ of at most~$2d-3$ edges in~$Q_d$, $d\ge 2$, that form disjoint paths, there is a Hamilton cycle that contains all of~$E$; see Figure~\ref{fig:ham}~(a).
Dvo\v{r}\'{a}k and Gregor~\cite{MR2326160} proved that for any set~$E$ of at most~$2d-4$ edges in~$Q_d$, $d\ge 5$, that form disjoint paths and two vertices~$x$ and~$y$ of opposite parity that are neither internal vertices of the paths nor end vertices of the same path, there is a Hamilton path with end vertices~$x$ and~$y$ that contains all of~$E$.

We now consider the problem of avoiding a set~$F$ of `forbidden' edges.
In this direction, Latifi, Zheng and Bagherzadeh~\cite{DBLP:conf/ftcs/LatifiZB92} showed that for any set~$F$ of at most~$d-2$ edges in~$Q_d$, there is a Hamilton cycle that avoids~$F$.
Chan and Lee~\cite{MR1129389} proved that for any set~$F$ of at most~$2d-5$ edges in~$Q_d$, $d\ge 3$, such that every vertex of~$Q_d$ is incident to at least two edges not in~$F$, there is a Hamilton cycle that avoids~$F$ (see also~\cite{MR3123444}); see Figure~\ref{fig:ham}~(b).
With regards to perfect matchings, Dimitrov, Dvo\v{r}\'{a}k, Gregor and \v{S}krekovski~\cite{MR2568029} proved that for a given perfect matching~$M$ of~$Q_d$, there is a Hamilton cycle that avoids~$M$ if and only if~$Q_d\setminus M$ is a connected graph; see Figure~\ref{fig:ham}~(c).

Interestingly, the bounds $2d-3$, $2d-4$, $d-2$, $2d-5$ in the aforementioned results on extending or avoiding certain sets of edges are all best possible, i.e., prescribing or forbidding more edges sometimes leads to situations where the desired Hamilton cycle or path does not exist.

Instead of forbidding edges, one may also consider the problem of forbidding certain vertices.
To this end, Fink and Gregor~\cite{MR2970496} proved that for any set~$F$ of at most~$\binom{d}{2}-2$ vertices of~$Q_d$, $d\ge 3$, there is a cycle of length at least~$2^d-2|F|$ that avoids all vertices in~$F$; see Figure~\ref{fig:ham}~(d).
Their result answers a conjecture by Casta\~{n}eda and Gotchev~\cite{MR2725517}, and the bound~$\binom{d}{2}-2$ is best possible.
Note that if we forbid some number of vertices of the same parity, then any extending cycle will also skip the same number of vertices of the opposite parity.
This explains that we need to allow skipping~$2|F|$ many vertices instead of only~$|F|$.

\subsection{Other restricted Gray codes}

Hamilton cycles and paths in the hypercube are often referred to as \defi{Gray codes}~\cite{MR4649606,MR1491049}.
In this setting, we view the vertices of the hypercube as bitstrings of length~$n$, given by the indicator vectors of the corresponding subsets.
The well-known binary reflected Gray code, named after the Bell Labs researcher Frank Gray, is a particularly elegant construction of such a cycle that lends itself well to fast algorithmic computation, optimally so that each bit flipped is computed in constant time.
In addition to the work discussed in the preceding section about Gray codes that include or avoid certain substructures, there has been a large amount of work on Gray codes that satisfy various other kinds of constraints, motivated by different applications:
\begin{itemize}[leftmargin=6mm, noitemsep, topsep=1pt plus 1pt]
\item balanced Gray codes~\cite{MR1410880}, where each bit is flipped equally often;
\item non-local Gray codes~\cite{MR1081842}, where any $2^t$ consecutive bitstrings flip more than $t$ distinct bits, for every $2\leq t\leq n-1$;
\item run-length restricted Gray codes~\cite{MR2014514}, where any two consecutive flips of the same bit are almost $n$ steps apart;
\item antipodal Gray codes~\cite{MR2047769}, where the complement of any bitstring~$x$ is visited exactly $n$ steps before or after~$x$;
\item monotone Gray codes~\cite{MR1329390}, where the Hamming weight of the bitstrings never decreases by more than~2 from what it was before along the code;
\item transition restricted Gray codes~\cite{MR1377601,MR2974271}, where the pairs of consecutively flipped bits are edges of a certain transition graph;
\item Beckett-Gray codes~\cite{DBLP:journals/endm/SawadaW07}, where on each transition $1\rightarrow 0$ the least recently entered 1-bit has to be flipped;
\item single-track Gray codes~\cite{MR2629553,MR4747482}, where each bit follows the same cyclically shifted flipping pattern along the code.
\end{itemize}

\subsection{Outline of this paper}

In Section~\ref{sec:prelim} we present some notations and terminology used throughout this paper, as well as some auxiliary lemmas needed later.
In Sections~\ref{sec:cycle} and~\ref{sec:forbidden} we present the inductive arguments for the proofs of Theorems~\ref{thm:cycle} and~\ref{thm:forbidden}, respectively.
Section~\ref{sec:other-proofs} contains the proofs of Theorems~\ref{thm:long-Qd}, \ref{thm:long-KQd}, and~\ref{thm:Hamlace2-KQd}.
We conclude with some open problems in Section~\ref{sec:open}.

\section{Preliminaries}
\label{sec:prelim}

We need the following definitions and auxiliary lemmas.

\subsection{Notation and definitions}

For a graph~$G$, the sets of vertices and edges of~$G$ are denoted by~$V(G)$ and~$E(G)$, respectively.
Furthermore, we write $K(G)$ for the complete graph on the vertex set of~$G$.
If~$G$ is connected and bipartite, then there is a unique bipartition of its vertices, and we write~$B(G)$ for the complete bipartite graph with that bipartition.
In other words, $K(G)$ is obtained from~$G$ by adding all missing edges to~$G$, and $B(G)$ is obtained from~$G$ by adding all possible edges to~$G$, while preserving the property that the graph remains bipartite.

For two sets~$A$ and~$B$, the symmetric difference of~$A$ and~$B$ is denoted by~$A \symdiff B$.
Note that $Q_d$ has an edge between any two vertices~$u$ and~$v$ with $|u\symdiff v|=1$.
For a vertex $u$ of~$Q_d$ and $i \in [d]$, we define $u^i := u \symdiff \{i\}$, and we refer to~$i$ as the \defi{direction} of the edge~$uu^i$.
We say that~$u$ is \defi{even} or \defi{odd}, if $|u|$ is even or odd, respectively.

A \defi{linear forest} is collection of vertex-disjoint paths.
The \defi{terminals} of a linear forest are its vertices of degree~1, i.e., the end vertices of the paths.
A \defi{shortcut} of a linear forest in a graph~$G$ is the set of edges of~$K(G)$ that connect the two terminals of every path to each other.

For a graph~$G$ and a subset of edges~$M$ of~$G$, we say that a vertex~$u$ of~$G$ is \defi{covered} by~$M$, if $u$ is an end vertex of one of the edges of~$M$.
We write $V(M)$ for the set of all vertices covered by~$M$.
If $u$ is not covered by~$M$ we say that it is \defi{avoided} by~$M$.
Recall that for a vertex~$u$ covered by an edge of a matching~$M$, we write $u^M$ for the other end vertex of that edge of~$M$.
Furthermore, for $i\in[d]$ we use the abbreviation $u^{iM} := (u^i)^M$.

For a set of edges $F \subseteq E(K(Q_d))$ and a direction~$i\in[d]$ we define $F^i_0 := F \cap E(K(Q^i_0))$ and $F^i_1 := F \cap E(K(Q^i_1))$, and we write $F^i_-$ for the edges of~$F$ joining vertices of~$Q^i_0$ and~$Q^i_1$.
This partitions $F$ into three disjoint sets~$F^i_0$, $F^i_1$ and~$F^i_-$.

The following definitions are illustrated in Figure~\ref{fig:layer}.
Recall the definition of half-layers given in Section~\ref{sec:results}.
Given some direction~$i\in[d]$, a half-layer in $Q^i_0$ or in~$Q^i_1$ is referred to as a \defi{quad-layer} in~$Q_d$.
Clearly, a half-layer has $2^{d-2}$ edges, and a quad-layer has $2^{d-3}$ edges.
A \defi{near half-layer} (\defi{near quad-layer}) is a half-layer (quad-layer) minus one edge.
We refer to the two end vertices of the removed edge, which are not part of the near half-layer (near quad-layer), as \defi{extension vertices} of the near half-layer (near quad-layer).
A \defi{2-near half-layer} is a half-layer minus two edges.
We refer to the four end vertices of the removed edges, which are not part of the 2-near half-layer, as \defi{extension vertices} of the 2-near half layer.
Given a matching~$M$ that includes a near half-layer (near quad-layer), we say that this near half-layer (near quad-layer) is \defi{covered} if both of its extension vertices are covered by two distinct edges of~$M$.

\begin{figure}
\centerline{
\includegraphics[page=1]{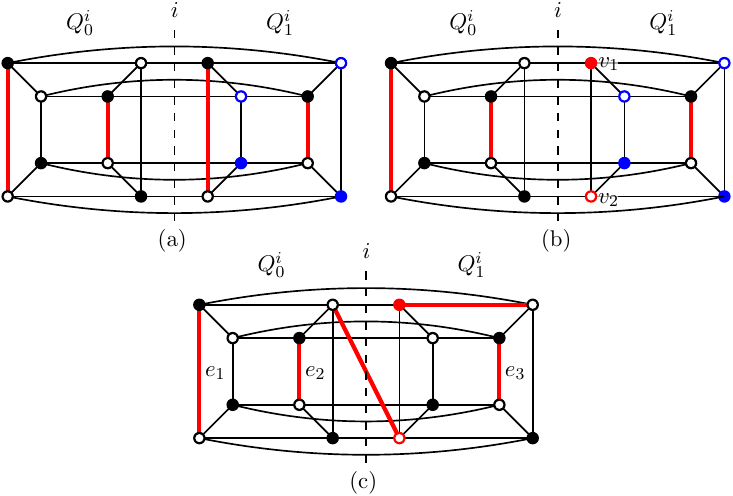}
}
\caption{The red edges in~$Q_4$ form a (a) half-layer, (b) near half-layer with extension vertices~$v_1$ and~$v_2$, or (c) matching that includes the covered near half-layer~$\{e_1,e_2,e_3\}$.
The subset of red edges incident with a vertex in~$Q^i_1$ form a (a) quad-layer, (b) near quad-layer, or (c) matching that includes the covered near quad-layer~$\{e_3\}$.
Every vertex~$x$ in (a) and~(b) colored blue is one for which the (near) quad-layer contained in~$Q^i_1$ is $x$-dangerous.
The dashed vertical line separates~$Q^i_0$ and~$Q^i_1$.}
\label{fig:layer}
\end{figure}

A half-layer of~$Q_d$ is \defi{$x$-dangerous} for every vertex~$x$ not incident with any edge from the half-layer.
An \defi{$x$-dangerous} near half-layer is an $x$-dangerous half-layer minus one edge.
Note that $x$ cannot be one of the extension vertices of the near half-layer.
For a vertex~$x$ of~$Q_d$, a quad-layer is \defi{$x$-dangerous} if $x$ is not incident with any edge from the quad-layer and $x$ belongs to the same $(d-1)$-dimensional subcube as the quad-layer.
Furthermore, we define an \defi{$x$-dangerous} near quad-layer as an $x$-dangerous quad-layer minus one edge.
Similarly to before, $x$ cannot be one of the extension vertices of the near quad-layer.

\subsection{A cycle that avoids two vertices}

The first lemma asserts that a cycle that extends an almost perfect matching of~$K(Q_d)$ and avoids a particular vertex~$x$ is forced to also avoid another vertex~$y$, provided that $x$ and $y$ have the correct parity.
This is useful when applying induction in the proof of Theorem~\ref{thm:forbidden}, because it prevents that the invariant `cycle avoids one prescribed vertex' blows up to `cycle avoids two prescribed vertices', `cycle avoids three prescribed vertices', etc., which would be undesirable.

\begin{lemma}
\label{lem:avoid2}
Let $x,y$ be two vertices of~$Q_d$ and let $M$ be a perfect matching of~$K(Q_d \setminus\{x,y\})$.
Let $C$ be a cycle that extends~$M$ and avoids~$x$.
Then, $C$ avoids~$y$ if and only if $x$ and~$y$ have opposite parity.
\end{lemma}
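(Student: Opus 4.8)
The plan is to prove the lemma by a parity count on the edges of~$C$ that do not belong to~$M$.

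First I would pin down the possible shapes of~$V(C)$. Since $C$ extends~$M$ we have $M\subseteq E(C)$, and since $M$ is a perfect matching of $K(Q_d\setminus\{x,y\})$ it covers exactly $V(Q_d)\setminus\{x,y\}$; hence $V(Q_d)\setminus\{x,y\}\subseteq V(C)$. Together with the hypothesis $x\notin V(C)$, this leaves only two possibilities: either $V(C)=V(Q_d)\setminus\{x,y\}$ (the case in which $C$ avoids~$y$), or $V(C)=V(Q_d)\setminus\{x\}$ (the case $y\in V(C)$). So it suffices to show that the first possibility occurs if and only if $x$ and~$y$ have opposite parity.

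The main step is a double count. Set $N:=E(C)\setminus M$; by the definition of ``extends'', every edge of~$N$ is an edge of~$Q_d$ and therefore joins an even vertex to an odd vertex. I would count the incidences between the edges of~$N$ and the even vertices of~$V(C)$ in two ways. On the one hand this number is~$|N|$, since each edge of~$N$ has exactly one even endpoint. On the other hand it equals the sum of $\deg_N(v)=2-\deg_M(v)$ over the even vertices $v$ of~$V(C)$, and here $\deg_M(v)=1$ for every $v\in V(C)\setminus\{x,y\}$ (such a vertex is covered by a unique edge of $M\subseteq E(C)$) while $\deg_M(v)=0$ for $v\in\{x,y\}\cap V(C)$. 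Running the identical count with odd vertices in place of even ones also yields~$|N|$. Equating the two expressions, and using that $Q_d$ has $2^{d-1}$ even and $2^{d-1}$ odd vertices, the leading terms cancel and one is left with a short relation involving only the number of even elements of~$\{x,y\}$ and whether $y\in V(C)$. Evaluating this relation in the two cases yields $y\notin V(C)$ exactly when $x$ and~$y$ have opposite parity, which is the assertion of the lemma.

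I do not anticipate any real obstacle; the only point requiring care is the bookkeeping of the two exceptional vertices: $x$ and~$y$ are covered by no edge of~$M$, so inside~$C$ they have $N$-degree~$2$, unlike every other vertex of~$C$, which has $N$-degree~$1$. If a more structural phrasing turns out to be shorter, I would instead observe that deleting the edges of~$M$ from~$C$ leaves a collection of hypercube edges that is a perfect matching of~$V(C)$ when $C$ avoids~$y$, and a perfect matching of $V(C)\setminus\{v_1,v_2\}$ together with one path $v_1\,y\,v_2$ otherwise, and then compare the numbers of even and odd vertices directly.
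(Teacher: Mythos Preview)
Your proposal is correct and follows essentially the same approach as the paper: both arguments exploit that the edges of $C\setminus M$ lie in~$Q_d$ and hence join vertices of opposite parity, and then compare the counts of even and odd vertices in~$V(C)$. The paper's write-up is the terser ``structural'' variant you mention at the end (a perfect matching of hypercube edges when $y\notin V(C)$, versus a matching plus one length-$2$ path centred at~$y$ when $y\in V(C)$), while your primary plan is the explicit double count; these are the same idea.
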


\begin{proof}
If $C$ avoids~$y$, then $C\setminus M$ is set of pairwise disjoint edges whose end vertices have opposite parity (i.e., a matching), covering all vertices of~$K(Q_d\setminus\{x,y\})$.
Since $Q_d$ has the same number of even and odd vertices, it follows that the avoided vertices~$x$ and~$y$ also have opposite parity.
On the other hand, if $C$ contains~$y$, then $C\setminus M$ is a set of pairwise disjoint edges whose end vertices have opposite parity, plus a path of length~2 whose two end vertices have opposite parity to the middle vertex~$y$.
It follows that~$x$ must have the same parity as~$y$.
\end{proof}

\subsection{Properties of half-layers and quad-layers}

Our first lemma describes the union of two or more half-layers in~$Q_d$.

\begin{lemma}
\label{lem:hlayers-structure}
Half-layers in~$Q_d$ satisfy the following properties:
\begin{enumerate}[label=(\roman*),leftmargin=8mm, noitemsep, topsep=1pt plus 1pt]
\item The union of two half-layers~$L$ and~$L'$ in different directions is a collection of paths of length~2, i.e., every edge of~$L$ is incident with exactly one edge of~$L'$ and vice versa, and they share exactly~$2^{d-2}$ vertices.
\item The union of $k$ half-layers in $k$ different directions avoids exactly~$2^{d-k}$ vertices of~$Q_d$.
\end{enumerate}
\end{lemma}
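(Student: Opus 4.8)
The plan is to identify $V(Q_d)$ with $\mathbb{F}_2^d$ via indicator vectors, writing $v=(v_1,\dots,v_d)$, and to exploit that each half-layer is governed by a single linear equation over $\mathbb{F}_2$. Indeed, the two half-layers in a given direction~$i$ are distinguished by the parity of the end vertex lying in~$Q^i_0$, and writing $\varepsilon\in\mathbb{F}_2$ for that parity, the corresponding half-layer~$L$ consists of exactly the edges $vv^i$ with $\sum_{j\ne i}v_j=\varepsilon$. Thus a vertex~$v$ is covered by~$L$ iff $\sum_{j\ne i}v_j=\varepsilon$, and avoided by~$L$ iff $\sum_{j\ne i}v_j=\varepsilon+1$; equivalently, both the covered set and the avoided set of~$L$ are affine hyperplanes of~$\mathbb{F}_2^d$ with common linear part $v\mapsto\langle\mathbf 1+e_i,v\rangle$, where $\mathbf 1$ is the all-ones vector and $e_i$ the $i$-th standard basis vector, so that $\mathbf 1+e_i$ has a zero in coordinate~$i$ and ones elsewhere.

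For part~(i) I would fix~$L$ in direction~$i$ and~$L'$ in direction~$j\ne i$, with parameters $\varepsilon$ and~$\varepsilon'$, and take an arbitrary edge $e=vv^i\in L$. Since $i\ne j$, the two sums $\sum_{\ell\ne j}v_\ell$ and $\sum_{\ell\ne j}(v^i)_\ell$ differ by~$1$, so exactly one of the endpoints $v,v^i$ of~$e$ lies in the covered set $\{w:\sum_{\ell\ne j}w_\ell=\varepsilon'\}$ of~$L'$; as~$L'$ is a matching, $e$ is then incident with exactly one edge of~$L'$, meeting it in that endpoint, while the other endpoint of~$e$ is avoided by~$L'$. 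The symmetric statement holds with~$L$ and~$L'$ interchanged, so the two incidence maps (sending an edge to the unique edge of the opposite half-layer incident with it) are mutually inverse bijections. Hence $L\cup L'$ decomposes into $|L|=2^{d-2}$ vertex-disjoint paths of length~$2$, each formed by one edge of~$L$ and one edge of~$L'$ meeting in a single vertex; counting one shared vertex per path gives exactly~$2^{d-2}$ common vertices, namely those covered by both~$L$ and~$L'$, which is part~(i).

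For part~(ii), let $L_1,\dots,L_k$ be half-layers in pairwise distinct directions $i_1,\dots,i_k$ with parameters $\varepsilon_1,\dots,\varepsilon_k$. A vertex~$v$ is avoided by $L_1\cup\dots\cup L_k$ iff it is avoided by every~$L_t$, i.e.\ iff $v$ solves the affine system $\langle\mathbf 1+e_{i_t},v\rangle=\varepsilon_t+1$ for $t\in[k]$. The crux is that the linear parts $\mathbf 1+e_{i_1},\dots,\mathbf 1+e_{i_k}$ are linearly independent over~$\mathbb{F}_2$: if a nonempty $T\subseteq[k]$ had $\sum_{t\in T}(\mathbf 1+e_{i_t})=\mathbf 0$, then at a coordinate outside $\{i_t:t\in T\}$ only the $\mathbf 1$'s contribute, forcing $|T|\equiv 0$, while at a coordinate $i_s$ with $s\in T$ the contribution is $|T|+1\equiv 1$, forcing $1=0$; this is contradictory whenever some coordinate lies outside $\{i_t:t\in T\}$, i.e.\ whenever $|T|<d$. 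So for $k\le d-1$ the coefficient vectors are independent, $v\mapsto(\langle\mathbf 1+e_{i_t},v\rangle)_{t\in[k]}$ is onto~$\mathbb{F}_2^k$, the affine system is consistent, and its solution set is a coset of a $(d-k)$-dimensional subspace, of size exactly $2^{d-k}$. (When $k=d$ and $d$ is odd the argument genuinely fails and the count is $0$ or~$2$; the lemma is applied only with $k\le d-1$.)

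The step I expect to be the main obstacle is the parity bookkeeping in part~(i) --- pinning down exactly how the relevant coordinate sum changes when one flips a coordinate distinct from the one defining a half-layer, and deducing from this that exactly one endpoint of each edge is covered by the other half-layer --- together with the linear-independence computation underpinning part~(ii); granting those, the two counts follow by routine $\mathbb{F}_2$-linear algebra and a direct count of the path components.
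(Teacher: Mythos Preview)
Your proof is correct. For part~(ii) you take essentially the same route as the paper---characterise the avoided set of a half-layer by a single $\mathbb{F}_2$-linear equation and count solutions of the resulting system---but you go further by actually verifying the linear independence of the forms $\mathbf 1+e_{i_t}$, which the paper simply asserts. Your caveat about $k=d$ is apt: the paper's statement and proof tacitly assume $k\le d-1$ (which is how the lemma is used later), and your observation that the system can be inconsistent when $k=d$ and $d$ is odd shows the bound $2^{d-k}$ is not literally correct in that boundary case.

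For part~(i) your argument differs from the paper's. The paper decomposes $Q_d$ into the $2^{d-2}$ disjoint $4$-cycles $(u,u^i,u^{ij},u^j)$ and observes that each such cycle contains exactly one edge of~$L$ and one of~$L'$, which are necessarily adjacent; the path structure and the vertex count fall out immediately. You instead argue edge by edge: flipping coordinate~$i$ changes the $j$-parity sum, so each edge of~$L$ has exactly one endpoint covered by~$L'$, and the resulting incidence maps are inverse bijections. Both approaches are short; the $4$-cycle picture is slightly more geometric and makes the vertex-disjointness of the length-$2$ paths immediate, while your coordinate argument dovetails more naturally with the linear-algebra setup you reuse in part~(ii).
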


\begin{proof}
To prove~(i), let $i$ and~$j$ be the directions of~$L$ and~$L'$, respectively, and consider the partition of~$Q_d$ into disjoint 4-cycles of the form~$C=(u,u^i,(u^i)^j,u^j)$ for some~$u\in V(Q_d)$.
From the two pairs of opposite edges in directions~$i$ and~$j$ in~$C$, exactly one edge belongs to~$L$ and one edge to~$L'$, respectively.

To prove~(ii), note that the vertices~$x$ of~$Q_d$ that are avoided by a half-layer in direction~$j$ are exactly those that satisfy the equation~$\sum_{i\in[d]\setminus\{j\}}x_i=a\pmod{2}$ for some $a\in\{0,1\}$.
Consequently, the vertices that are avoided simultaneously by $k$ half-layers in $k$ different directions satisfy $k$ linearly independent equations of this form (over~$\mathbb{Z}_2$), so the number of solutions is~$2^{d-k}$.
\end{proof}

The next lemma asserts that a matching cannot contain half-layers and quad-layers in several different directions.

\begin{lemma}
\label{lem:hlayers-number}
Let $M$ be a matching of~$K(Q_d)$ and let $z=\emptyset$.
\begin{enumerate}[label=(\roman*),leftmargin=8mm, noitemsep, topsep=1pt plus 1pt]
\item If $d \ge 4$, then $M$ contains a (near) half-layer in at most one direction.
\item If $d \ge 6$, then $M$ contains a $z$-dangerous (near) quad-layer in at most one direction.
\item If $d \ge 5$, then $M$ contains a 2-near half-layer in at most one direction.
\end{enumerate}
\end{lemma}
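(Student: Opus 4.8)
The plan is to prove each part by a counting/parity argument that exploits the fact that two structures of the relevant type, lying in different directions, together cover too many vertices for a matching to accommodate. Throughout, recall that a half-layer has $2^{d-2}$ edges and hence covers $2^{d-1}$ vertices, while a quad-layer has $2^{d-3}$ edges and covers $2^{d-2}$ vertices; a near (quad-)layer covers two fewer vertices, and a $2$-near half-layer covers four fewer. The key structural input is Lemma~\ref{lem:hlayers-structure}(i): two half-layers $L,L'$ in distinct directions $i\ne j$ interleave into paths of length~$2$, so $L\cup L'$ covers $|V(L)|+|V(L')| - 2^{d-2} = 2^{d-1}+2^{d-1}-2^{d-2} = 3\cdot 2^{d-2}$ vertices. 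Since a matching covers an even number of vertices and $|V(Q_d)| = 2^d$, and here we would need a matching covering at least $3\cdot 2^{d-2}$ vertices out of $2^d = 4\cdot 2^{d-2}$, this is not yet a contradiction by itself — the real obstruction is that $L\cup L'$ is not a matching (the length-$2$ paths force a vertex of degree~$2$). So the argument must be: if $M\supseteq L$ and $M\supseteq L'$ with $i\ne j$, then since every edge of $L$ meets an edge of $L'$ at a common vertex, that vertex has degree~$2$ in $M$, contradicting that $M$ is a matching. This handles the pure half-layer case of~(i); one then has to patch in the "near" cases and the smaller $d$ bookkeeping.

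For part~(i) with near half-layers: suppose $M$ contains a near half-layer $L_1 = L\setminus\{e_1\}$ in direction~$i$ and a near half-layer $L_2 = L'\setminus\{e_2\}$ in direction~$j$, with $i\ne j$. By Lemma~\ref{lem:hlayers-structure}(i), each edge of $L$ shares a vertex with exactly one edge of $L'$. Removing one edge $e_1$ from $L$ destroys at most the (at most two, but actually exactly two) incidences at the endpoints of $e_1$; likewise removing $e_2$ from $L'$ kills at most two incidences on the $L'$ side. Since $L$ has $2^{d-2}\ge 4$ edges when $d\ge 4$, after deleting $e_1$ there remain at least $2^{d-2}-1 \ge 3$ edges of $L_1$, and the incidence pattern between $L$ and $L'$ is a perfect matching on edges (each edge of $L$ hits exactly one edge of $L'$), so among the $\ge 3$ surviving edges of $L_1$ at least one still has its $L'$-partner surviving in $L_2$ — precisely because deleting $e_2$ removes the partnership of at most one edge of $L$. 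Concretely: the map $e\mapsto$ (its $L'$-partner) is a bijection $L\to L'$; deleting $e_1$ from the domain and $e_2$ from the range leaves a bijection-defect of at most~$1$ on each side, so at least $2^{d-2}-2\ge 2$ pairs survive intact, each producing a degree-$2$ vertex in $M$. Contradiction. The same computation, with "at most one direction" as the conclusion, gives~(i). I expect the only delicate point to be verifying that $d\ge 4$ (so $2^{d-2}\ge 4$) is exactly what makes $2^{d-2}-2\ge 1$, i.e.\ at least one surviving interleaved pair; for $d=3$ a half-layer has only $2$ edges and the argument breaks, consistent with the hypothesis.

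For part~(ii), a $z$-dangerous quad-layer in direction~$i$ lives inside one of the two $(d-1)$-subcubes $Q^i_0, Q^i_1$, and since it is $z$-dangerous it must avoid $z$, forcing it to lie in the subcube containing $z$, namely a half-layer of $Q^i_0$ (in some direction $j\ne i$). Thus a $z$-dangerous (near) quad-layer in direction~$i$ is, within $Q^i_0 \cong Q_{d-1}$, a (near) half-layer. If $M$ contained $z$-dangerous near quad-layers in two directions $i\ne i'$, restrict attention to $Q^{i}_0 \cap Q^{i'}_0 \cong Q_{d-2}$ — or more cleanly, note both quad-layers, being $z$-dangerous, determine half-layers of the respective $(d-1)$-subcubes, and one reduces to part~(i) applied in dimension $d-1$ after checking the two resulting near half-layers sit in distinct directions of a common $Q_{d-1}$. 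This requires $d-1\ge 4$, i.e.\ $d\ge 5$; the stated hypothesis $d\ge 6$ presumably comes from needing a bit more room because the two quad-layers a priori live in different $(d-1)$-subcubes ($Q^i_0$ versus $Q^{i'}_0$), and one must first intersect down to $Q^i_0\cap Q^{i'}_0$, losing another dimension — so the correct reduction target is dimension $d-2\ge 4$, giving $d\ge 6$. For part~(iii), a $2$-near half-layer is a half-layer minus two edges, covering $2^{d-1}-4$ vertices; the same interleaving bijection argument as in~(i) applies, now deleting up to two edges from each of $L$ and $L'$, so at least $2^{d-2}-4$ interleaved pairs survive; this is $\ge 1$ exactly when $2^{d-2}\ge 5$, i.e.\ $d\ge 5$ (since $2^{3}=8\ge 5$, while $2^2=4<5$), matching the hypothesis. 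The main obstacle I anticipate is not any single inequality but getting the dimension bookkeeping in part~(ii) exactly right — tracking which $(d-1)$- and $(d-2)$-subcubes the two $z$-dangerous quad-layers jointly live in, and confirming the reduction genuinely lands in a cube of dimension $\ge 4$ so that part~(i) (or Lemma~\ref{lem:hlayers-structure}(i)) can be invoked legitimately.
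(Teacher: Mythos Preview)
Your arguments for parts~(i) and~(iii) are correct and match the paper's approach: the paper proves~(iii) by exactly your shared-vertex count (phrased as $2(2^{d-2}-2)>2^{d-2}$ for $d\ge 5$, which is the complement of your surviving-pairs count), and for~(i) it simply cites~\cite{MR3830138}, whereas you give the direct bijection argument via Lemma~\ref{lem:hlayers-structure}(i), which is the standard proof and is fine.

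For part~(ii) there is a genuine gap, starting with a definitional confusion. You write that ``a $z$-dangerous quad-layer in direction~$i$ lives inside one of the two $(d-1)$-subcubes $Q^i_0, Q^i_1$'', but this is backwards: the \emph{direction} of a quad-layer is the direction of its edges (see the usage in Lemma~\ref{lem:Q5-quad-cut}, where direction~$i_1$ forces $|M^{i_1}_-|\ge 3$), so a quad-layer in direction~$j$ lives in $Q^{a}_0$ or~$Q^a_1$ for some $a\ne j$, not in $Q^j_0$. With the roles corrected, your reduction to part~(i) in a lower-dimensional cube still makes sense, but it requires a case split you have not carried out: if the two $z$-dangerous near quad-layers are in~$Q^{a}_0$ (direction~$j$) and~$Q^{b}_0$ (direction~$j'$) with $a\ne b$, the restriction to $Q^a_0\cap Q^b_0$ only works when $j\ne b$ and $j'\ne a$; if, say, $j=b$, then the edges of the first quad-layer are in direction~$b$ and hence leave $Q^b_0$ entirely, so nothing survives the restriction. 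That subcase needs a separate shared-vertex count (it does go through for $d\ge 6$, but the argument is different). The paper, for its part, does not prove~(ii) either and simply cites~\cite{MR3830138}; so your plan goes further than the paper but is not yet a proof.
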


We remark that the lower bounds on the dimension~$d$ stated in Lemma~\ref{lem:hlayers-number} are all best possible.

\begin{proof}
The first two statements are proved in~\cite{MR3830138}, specifically in Lemma~6~(i) and Lemma~7~(vi).

To prove~(iii) we argue as follows.
By Lemma~\ref{lem:hlayers-structure}~(i), two half-layers in different directions share $2^{d-2}$ vertices.
Having two 2-near half-layers in different directions, each one uses $2^{d-2}-2$ of the shared vertices to be matched in its direction (and not the other direction).
However, $2(2^{d-2}-2) > 2^{d-2}$ for $d\ge 5$, so two 2-near half-layers in different directions cannot exist for $d \ge 5$.
\end{proof}

\begin{lemma}
\label{lem:Q5-quad-cut}
Let $d=5$, and let $M$ be a matching of~$K(Q_d)$ that avoids~$z=\emptyset$ and contains a $z$-dangerous (near) quad-layer.
Then there is a direction~$i\in[d]$ such that $|M^i_-|\ge 3$ and $M^i_0$ contains no (near) half-layers of~$Q^i_0$.
\end{lemma}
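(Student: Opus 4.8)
The plan is to work in $Q_5$ directly, using the structural facts about half-layers and quad-layers already established. Fix the $z$-dangerous (near) quad-layer contained in $M$; say it lives in the subcube $Q^j_1$ for some direction $j\in[5]$ (the case where it lives in $Q^j_0$ is symmetric, using that $z\notin Q^j_1$). Since it is $z$-dangerous, $z$ lies in the same $(d-1)$-dimensional subcube, so here the quad-layer sits in $Q^j_1$ while $z\in Q^j_0$; I should be careful about which of the two subcubes is meant, but in either case the quad-layer is a half-layer of some $4$-dimensional subcube $Q'$ in some direction $k\neq j$, minus at most one edge. First I would record that this quad-layer covers $2^{d-3}=4$ vertices on each side of the $k$-layer of $Q'$, i.e.\ it matches $4$ pairs (or $3$ pairs, in the near case) of vertices of $Q'$ in direction $k$.

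Next I would choose the direction $i$ claimed in the lemma. The natural candidate is $i=k$, the direction of the quad-layer within $Q'$: the quad-layer itself consists of edges of direction $k$ of $Q_d$, all of which lie in $M^k_-$, so $|M^k_-|\ge 2^{d-3}-1 = 3$ for $d=5$, giving the first required inequality immediately (with equality only in the near case). It then remains to show that $M^k_0$ contains no (near) half-layer of $Q^k_0$. Here I would argue by contradiction: a (near) half-layer of $Q^k_0$ is a (near) quad-layer of $Q_d$ in direction $k$ with one part in $Q^k_0$; together with the original quad-layer (which, being a half-layer of $Q'$ in direction $k$ minus $\le 1$ edge, contributes another (near) quad-layer of $Q_d$ in direction $k$ on the other side), this would produce two (near) quad-layers in direction $k$ on opposite sides of the $k$-layer — but these are edges of $M$ all of direction $k$, hence form a matching whose union would be a near half-layer or 2-near half-layer of $Q_d$ in direction $k$, and this is fine, so the contradiction must come instead from counting the vertices these structures force to be matched in direction $k$.

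The cleaner route for that last step: the original $z$-dangerous (near) quad-layer in direction $k$ uses $\ge 3$ vertices of one side of the $Q^k$-split that are matched in direction $k$; a (near) half-layer of $Q^k_0$ in some direction $\ell\neq k$ forces its $2^{d-3}-1\ge 3$ matched vertices to be matched in direction $\ell\neq k$, but some of these vertices must coincide with vertices forced into direction-$k$ edges, because a half-layer of $Q^k_0$ (direction $\ell$) and the projection of the quad-layer's shadow into $Q^k_0$ overlap heavily by Lemma~\ref{lem:hlayers-structure}~(i). Concretely, in $Q^k_0 \cong Q_4$ a half-layer in direction $\ell$ meets any half-layer in another direction in $2^{d-3}=4$ vertices, while $M$ can match each vertex in at most one direction; tracking that the near quad-layer's endpoints on the $Q^k_0$ side form (most of) a half-layer-complement in some direction, the overlap exceeds the available unmatched vertices for $d=5$, a contradiction. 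I expect this overlap count — identifying exactly which direction the near half-layer of $Q^k_0$ would have to be in, and checking it clashes with the $z$-dangerous quad-layer for every possibility — to be the main obstacle, and it will likely require splitting into the "half-layer" and "near half-layer" cases for the quad-layer, and possibly invoking Lemma~\ref{lem:hlayers-number}~(iii) to rule out two $2$-near half-layers once the pieces are reassembled.
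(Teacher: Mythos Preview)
Your approach has a genuine gap: fixing $i=k$ once and for all does not work. Concretely, take $d=5$, let the $z$-dangerous quad-layer be the half-layer of $Q^j_0$ in direction $k$ (four edges $uu^k$ with $u$ odd in $Q^j_0\cap Q^k_0$), and let $M$ additionally contain the three edges of the half-layer of $Q^k_0$ in direction $j$ that do \emph{not} touch $z$. These two pieces are vertex-disjoint, so $M$ is a valid matching, yet $M^k_0$ contains a near half-layer of $Q^k_0$ in direction $j$. Your overlap argument breaks precisely here: when the hypothetical half-layer of $Q^k_0$ has direction $\ell=j$, its vertex set is disjoint from the quad-layer's shadow in $Q^k_0$, so no conflict arises. (Your appeal to Lemma~\ref{lem:hlayers-structure}~(i) is also off target: that lemma compares two half-layers of the same cube, not a half-layer of $Q^k_0$ with a set of endpoints.)

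The paper's proof handles exactly this obstruction by \emph{iterating} rather than fixing $i$: if $M^{i_1}_0$ contains a (near) half-layer of $Q^{i_1}_0$ in some direction $i_2$, that half-layer is itself a $z$-dangerous (near) quad-layer of $Q_5$, so $|M^{i_2}_-|\ge 3$ and one tries $i_2$ instead; and so on. The real content is showing this process cannot loop. If it cycled with period $\ell\ge 2$, each of the $\ell$ quad-layers would cover at least $d-3=2$ neighbours of $z$ with edges in its own direction, forcing $\ell=2$; and for $\ell=2$ one looks inside the $3$-cube $(Q^{i_1}_0)^{i_2}_0$, where $z$ has only three neighbours but four would need to be covered in two different directions, a contradiction. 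This termination-by-counting-neighbours-of-$z$ argument is the idea missing from your proposal.
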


\begin{proof}
Let $i_1$ be the direction of a $z$-dangerous (near) quad-layer, implying that $|M^{i_1}_-|\ge 2^{d-3}-1=3$.
If $M^{i_1}_0$ contains no (near) half-layers, then we are done.
Otherwise, let $i_2$ be the direction of a (near) half-layer of~$Q^{i_1}_0$, implying that $|M^{i_2}_-|\ge 3$.
If $M^{i_2}_0$ contains no (near) half-layers, then we are done.
Otherwise, let $i_3$ be the direction of a (near) half-layer of~$Q^{i_2}_0$, implying that $|M^{i_3}_-|\ge 3$ etc.
If this process terminates, then we are done.
Otherwise, it loops in a cycle of length~$\ell \ge 2$.
As each of these $\ell$ many $z$-dangerous (near) quad-layers of~$Q_d$ in direction~$i_j$, $j\in\{1,\ldots,\ell\}$, covers at least $d-3=2$ neighbors of~$z$ with an edge in direction~$i_j$, we conclude that $\ell=2$.
For $\ell=2$, we have a (near) half-layer of $Q^{i_1}_0$ in direction~$i_2$ and a (near) half-layer of $Q^{i_2}_0$ in direction~$i_1$, and each of them covers at least $d-3=2$ neighbors of~$z$ in the subcube~$(Q^{i_1}_0)^{i_2}_0$ with an edge in direction~$i_2$ and~$i_1$, respectively.
However, the vertex $z$ has only three neighbors in the 3-dimensional subcube~$(Q^{i_1}_0)^{i_2}_0$.
\end{proof}

The next lemma allows us to complete partial matchings of~$K(Q_d)$ to perfect matchings without creating half-layers.

\begin{lemma}
\label{lem:avoid-layer}
Let $d\ge 4$, let $M$ be a matching of~$K(Q_d)$ that contains no half-layers, and let $A \subseteq V(Q_d) \setminus V(M)$ such that $|A| \ge 4$ and even.
Then there is a perfect matching~$P$ on~$K(A)$ such that $M \cup P$ contains no half-layers.
The same statement holds for near half-layers.
\end{lemma}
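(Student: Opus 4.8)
The plan is to prove both statements simultaneously by induction on the (even) size of~$A$, with base case $|A|=4$.

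For the induction step, assume $|A|\ge 6$. The key observation is that adding a single edge~$uv$ with $u,v\in A$ to~$M$ can create a half-layer (resp.\ a near half-layer) only if $uv\in E(Q_d)$ and~$M$ already contains a near half-layer (resp.\ 2-near half-layer) of which~$\{u,v\}$ is the extension-vertex pair (resp.\ one of the two direction-$i$ extension-vertex pairs). Since~$M$ satisfies the hypothesis, it contains at most one near half-layer (resp.\ 2-near half-layer) per parity class in each direction, as two of them in the same class and direction would force~$M$ to contain a half-layer (resp.\ near half-layer). Together with Lemma~\ref{lem:hlayers-number}\,(i) (resp.\ Lemma~\ref{lem:hlayers-number}\,(iii) when $d\ge 5$, and the bound $|E(M)|\le(2^d-|A|)/2\le 5$ when $d=4$), this shows that~$M$ has at most two near half-layers (resp.\ 2-near half-layers), hence at most four ``forbidden'' pairs~$\{u,v\}\subseteq A$. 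Since $\binom{|A|}{2}\ge 15$, I pick a non-forbidden pair~$\{u,v\}$, set $M':=M\cup\{uv\}$ and $A':=A\setminus\{u,v\}$, apply the induction hypothesis to~$M'$ and~$A'$ to get a perfect matching~$P'$ of~$K(A')$ with $M'\cup P'$ free of half-layers (resp.\ near half-layers), and output $P:=P'\cup\{uv\}$.

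For the base case $|A|=4$ there are exactly three perfect matchings of~$K(A)$, and I claim one of them works. If some perfect matching~$P$ of~$A$ uses no edge of~$Q_d$, then $M\cup P$ has the same hypercube edges as~$M$, so it contains no half-layer (resp.\ near half-layer) and we are done. Since~$Q_d$ is triangle-free, the edges of~$Q_d$ lying inside~$A$ form a triangle-free graph on four vertices, which has maximum degree at most~$2$ unless it contains a claw; in the former case it is a subgraph of a $4$-cycle on the vertex set of~$A$ and hence misses one of the three perfect matchings. So it remains to treat the \emph{claw case}, where $A=\{v,v^i,v^j,v^k\}$ for three distinct directions $i,j,k$. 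Here the three perfect matchings are $P_x=\{vv^x,\,v^yv^z\}$ with $\{x,y,z\}=\{i,j,k\}$, and each~$P_x$ has exactly one hypercube edge, namely~$vv^x$ (the vertices $v^y,v^z$ are at distance~$2$). Hence if~$P_x$ fails, the half-layer (resp.\ near half-layer) contained in $M\cup P_x$ must use~$vv^x$, since otherwise it would lie entirely in~$M$; this forces~$M$ to contain a near half-layer (resp.\ 2-near half-layer) in direction~$x$ with~$v$ and~$v^x$ among its extension vertices.

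If all three of $P_i,P_j,P_k$ failed, then~$M$ would contain a near half-layer (resp.\ 2-near half-layer) in each of the three distinct directions $i,j,k$; for the half-layer statement this contradicts Lemma~\ref{lem:hlayers-number}\,(i), and for the near half-layer statement with $d\ge 5$ it contradicts Lemma~\ref{lem:hlayers-number}\,(iii). The one remaining case is the near half-layer statement with $d=4$, where Lemma~\ref{lem:hlayers-number}\,(iii) is unavailable; here I would argue directly. The three 2-near half-layers use $2+2+2=6$ pairwise edge-disjoint edges (they point in distinct directions), while $|E(M)|\le(2^4-4)/2=6$, so~$M$ is exactly their union and its four uncovered vertices are precisely those of~$A$. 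Normalizing $v=\emptyset$ and writing~$l$ for the fourth direction, the vertex~$\{l\}$ is covered by~$M$, and the only edges of~$M$ that could cover it point in direction $i$, $j$, or~$k$; since~$\{l\}$ is covered exactly once, at most one of the three 2-near half-layers meets it, and this pins down the other two 2-near half-layers completely and makes them share one of the size-$2$ vertices $\{i,j\}$, $\{i,k\}$, $\{j,k\}$, contradicting that~$M$ is a matching. The main obstacle is exactly this base case, specifically the claw configuration: everywhere else the argument is a short counting step, but the claw case with $d=4$ in the near half-layer version genuinely requires unwinding the structure of three 2-near half-layers by hand, because the convenient uniqueness statement of Lemma~\ref{lem:hlayers-number}\,(iii) only holds for $d\ge 5$.
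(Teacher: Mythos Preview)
Your overall strategy is sound and closely parallels the paper's: reduce to $|A|=4$, then handle the one genuinely hard configuration (your claw case, which is exactly the paper's ``one even vertex with three odd neighbours'' case). The paper's reduction step is slicker---for $|A|\ge 6$ it simply matches two vertices of~$A$ of the same parity, which cannot create a (near) half-layer since such an edge is not in~$Q_d$---but your counting of forbidden pairs also works.

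There is, however, a gap in your $d=4$ claw argument for the near-half-layer statement. You correctly reach the point where $M$ is exactly the union of three 2-near half-layers in directions $i,j,k$, each with two edges, and the uncovered set is precisely $A=\{\emptyset,\{i\},\{j\},\{k\}\}$. But your next move---``at most one of the three 2-near half-layers meets~$\{l\}$, and this pins down the other two and makes them share a size-$2$ vertex''---does not go through. Since $\emptyset$ and~$\{x\}$ are among the extension vertices of the direction-$x$ 2-near half-layer, that 2-near half-layer lies inside the half-layer containing the edge $\emptyset\{x\}$, i.e.\ the half-layer whose $Q^x_0$-endpoints are \emph{even}. The vertex $\{l\}\in Q^x_0$ is odd, so~$\{l\}$ lies in \emph{none} of these three half-layers. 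Hence the constraint ``does not meet~$\{l\}$'' is vacuous and pins down nothing; on the other hand, none of the six edges of~$M$ is incident with~$\{l\}$, which already contradicts the fact that $\{l\}\notin A$ must be covered. So the contradiction arrives one line earlier than you claim, and your ``shared vertex'' reasoning is both unnecessary and unjustified as written. The paper reaches the same contradiction by a parity count: the six $Q_4$-edges of~$M$ cover six odd vertices, and together with the three odd vertices in~$A$ this exceeds the eight odd vertices of~$Q_4$.
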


\begin{proof}
We first prove the statement for half-layers.
We repeatedly match pairs of vertices in~$A$, adding the matched pairs to~$P$, thus decreasing the size of~$A$ in each round, as follows:
If $|A| \ge 6$, we match any two vertices of~$A$ having the same parity, which does not create half-layers, and repeat.
If $|A| = 4$ we distinguish three cases.
If all vertices in~$A$ have the same parity, we match them arbitrarily, which does not create half-layers.
If $A$ has two vertices of each parity, we match them in those pairs, and we are done as well.
Otherwise we have one even and three odd vertices in~$A$ or vice versa, and by symmetry it suffices to consider the first case.
Let $a$ be the unique even vertex in~$A$.
If one of the three odd vertices in~$A$ is not a neighbor of~$a$ in~$Q_d$, then we match $a$ to this vertex and the remaining two vertices to each other, which does not create half-layers.
On the other hand, if all three odd vertices in~$A$ are neighbors of~$a$, then for at most two of them adding an edge to~$a$ may create a half-layer.
Indeed, if for all three of them adding an edge to~$a$ would create a half-layer, then this would imply $|M|\geq 3(2^{d-2}-1)=:\ell$, so $M$ contains $\ell$ odd vertices, which together with $A$ are $\ell+3\geq 3\cdot 2^{d-2}>2^{d-1}$ odd vertices, but $Q_d$ has only $2^{d-1}$ odd vertices in total.
It follows that we can add an edge from~$a$ to one of the vertices in~$A$ without creating a half-layer, and then the remaining two vertices in~$A$ can be matched to each other.

The same proof works for near half-layers instead of half-layers, the only difference being the inequalities $|M|\geq 3(2^{d-2}-2)=:\ell$ and $\ell+3\geq 3(2^{d-2}-1)>2^{d-1}$ in the last step that are valid for $d\geq 4$.
\end{proof}

\subsection{Large cuts through maximal matchings}

For any edge $e=uv$ of~$K(Q_d)$ we define the \defi{length} of~$e$ to be the quantity $\ell(uv):=|u\symdiff v|$.
If $\ell(e)\ge 2$ then we say that $e$ is \defi{long}.
In other words, long edges are edges of~$K(Q_d)$ that are not present in~$Q_d$.

For a set of edges $F \subseteq E(K(Q_d))$ we define
\begin{equation}
\label{eq:ellF}
\ell(F):=\sum_{e\in F}\ell(e) = \sum_{i \in [d]} |F^i_-|.
\end{equation}

We say that a matching~$M$ of~$K(Q_d)$ is \defi{maximal} if~$M$ covers at least one end vertex of every edge of~$Q_d$.

\begin{lemma}
\label{lem:matching-length}
For every maximal matching~$M$ of~$K(Q_d)$ there is a maximal matching~$M'$ of~$Q_d$ such that~$\ell(M) \ge \ell(M')$.
\end{lemma}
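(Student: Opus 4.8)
The plan rests on the observation that every edge of~$Q_d$ has length~$1$, so $\ell(M')=|M'|$ for any matching~$M'$ of~$Q_d$; it therefore suffices to produce a maximal matching~$M'$ of~$Q_d$ with $|M'|\le\ell(M)$. Since a perfect matching of $Q_d-W$ is a maximal matching of~$Q_d$ with uncovered (hence independent) vertex set~$W$ and $(2^d-|W|)/2$ edges, this amounts to finding an independent set~$W$ of~$Q_d$ with $Q_d-W$ having a perfect matching and $|W|\ge 2^d-2\ell(M)$.

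First I would dispose of the easy case $\ell(M)\ge 2^{d-1}$ by taking $W=\emptyset$, i.e.\ letting $M'$ be any perfect matching of~$Q_d$. So assume $\ell(M)<2^{d-1}$, and set $U:=V(Q_d)\setminus V(M)$, which is independent by maximality of~$M$. Write $U_{\mathrm e},U_{\mathrm o}$ for its even and odd vertices, and split~$M$ into the edges joining vertices of the same parity (these have even length~$\ge 2$) and the edges joining vertices of opposite parity. Counting the even, resp.\ odd, vertices covered by~$M$ and using $\ell(M)<2^{d-1}$ yields $|U_{\mathrm e}|,|U_{\mathrm o}|\ge 1$; moreover, assuming w.l.o.g.\ $|U_{\mathrm e}|\ge|U_{\mathrm o}|$, the same bookkeeping shows that a balanced set $W\subseteq U$ with $W\supseteq U_{\mathrm o}$ and $|W\cap U_{\mathrm e}|=|U_{\mathrm o}|$ has $|W|=2|U_{\mathrm o}|\ge 2^d-2\ell(M)$, which is exactly the size we need.

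It then remains to choose $S\subseteq U_{\mathrm e}$ with $|S|=|U_{\mathrm o}|$ so that $Q_d-(U_{\mathrm o}\cup S)$ has a perfect matching (the set $W:=U_{\mathrm o}\cup S$ is automatically independent, being contained in~$U$). Such an~$S$ exists if and only if $Q_d-U_{\mathrm o}$ has a matching saturating~$V(M)$: a perfect matching of $Q_d-(U_{\mathrm o}\cup S)$, read inside $Q_d-U_{\mathrm o}$, saturates $V(M)$; conversely, since $|U_{\mathrm e}|\ge|U_{\mathrm o}|$, a matching of $Q_d-U_{\mathrm o}$ saturating $V(M)$ leaves exactly $|U_{\mathrm o}|$ even vertices uncovered, necessarily inside~$U_{\mathrm e}$, and deleting those gives the desired~$S$. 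To obtain a matching of $Q_d-U_{\mathrm o}$ saturating $V(M)$ I would apply the Mendelsohn--Dulmage theorem: it is enough to have a matching of $Q_d-U_{\mathrm o}$ saturating the odd vertices outside~$U_{\mathrm o}$, which follows from Hall's theorem since $Q_d$ itself has a perfect matching, and a matching saturating the even vertices covered by~$M$, for which Hall's condition reads
\[ |N_{Q_d}(T)\setminus U_{\mathrm o}|\ge|T| \qquad \text{for every set $T$ of even vertices covered by $M$.} \]

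Verifying this last condition is the main obstacle, and it can genuinely fail: the obstruction is a ``trapped'' vertex, an even vertex~$v$ covered by~$M$ all of whose $Q_d$-neighbours lie in~$U_{\mathrm o}$. Such a~$v$ is forced to be matched in~$M$ along a long edge (all its hypercube-neighbours are uncovered), so every trapped vertex adds extra length to~$\ell(M)$, i.e.\ creates slack in the target inequality. The plan for the hard part is therefore twofold: (a)~peel off the trapped vertices together with parts of their neighbourhoods, charging the cost to the long $M$-edges that cover them, which lowers the required bound $2^d-2\ell(M)$ faster than it lowers the available slack; and (b)~in the remaining instance, where no trapped vertex occurs, establish the displayed Hall condition using the strong expansion of~$Q_d$ (any set of at most $2^{d-1}$ vertices has a neighbourhood at least as large, since $Q_d$ is perfectly matchable) together with the independence of~$U$, by analysing an inclusion-minimal violating set~$T$; an alternative route for step~(b) is a separate induction on~$d$, splitting~$Q_d$ into two $(d-1)$-subcubes in a direction chosen so that the crossing edges of~$M$ can be absorbed by hand. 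The perfect matching of $Q_d-W$ produced in this way is the desired maximal matching~$M'$.
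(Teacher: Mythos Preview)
Your plan is considerably more elaborate than needed, and it is not complete: you yourself flag the Hall condition as ``the main obstacle'' and then outline two possible attacks (peeling off trapped vertices with a charging argument, or a separate induction on~$d$) without carrying out either. Even the implicit claim that single trapped vertices are the only obstructions to the displayed Hall condition would need justification, and the extra restriction $W\subseteq U$ that you impose is not obviously harmless---there is no a~priori reason the uncovered set of the target matching~$M'$ should sit inside the uncovered set of~$M$.

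The paper bypasses all of this with a direct local replacement. Take any long edge $uv\in M$ (so $\ell(uv)\ge 2$) and delete it; to restore maximality, if $u$ now has an uncovered $Q_d$-neighbour~$u'$, add the edge~$uu'$, and likewise for~$v$. One edge of length~$\ge 2$ has been removed and at most two edges of length~$1$ have been added, so $\ell$ does not increase, and the resulting matching is again maximal (every $Q_d$-edge still has a covered endpoint) with strictly fewer long edges. Iterating terminates at a maximal matching of~$Q_d$. Your opening reduction to ``find a maximal matching $M'$ of~$Q_d$ with $|M'|\le\ell(M)$'' is correct, but there is no need to analyse which independent sets~$W$ make $Q_d-W$ perfectly matchable, nor to invoke Mendelsohn--Dulmage or hypercube expansion: maximality is preserved one step at a time.
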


\begin{proof}
Suppose that $M$ contains a long edge~$uv$, i.e., $\ell(uv)\ge 2$.
We define~$M':=M\setminus\{uv\}$.
Furthermore, if there is a neighbor~$u'$ of~$u$ in~$Q_d$ that is not covered by~$M'$, then we redefine $M':=M'\cup\{uu'\}$.
Similarly, if there is a neighbor~$v'$ of~$v$ in~$Q_d$ that is not covered by~$M'$, then we redefine $M':=M'\cup\{vv'\}$.
As an edge of length $\ell(uv)\ge 2$ is removed from~$M$ and at most two length~1 edges are added instead, we have $\ell(M)\ge \ell(M')$.
Furthermore, note that $M'$ is again a maximal matching of~$K(Q_d)$ with one less long edge than~$M$.
Consequently, repeating this replacement process terminates with a maximal matching of~$Q_d$.
This completes the proof of the lemma.
\end{proof}

The next two lemmas establish lower bounds for the size of maximal matchings of~$Q_d$ and~$K(Q_d)$, respectively.

\begin{lemma}[{\cite[Lemma~3]{MR321804}}]
\label{lem:max-Qd}
Every maximal matching of~$Q_d$ has at least $f(d):=\frac{d}{3d-1}|V(Q_d)|=\frac{d 2^d}{3d-1}$ edges.
\end{lemma}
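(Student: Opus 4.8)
The plan is a double count of the edges of~$Q_d$ that run between the vertices covered and the vertices avoided by~$M$. Write $m:=|M|$, let $W:=V(M)$ (so $|W|=2m$), and let $U:=V(Q_d)\setminus W$ be the set of vertices avoided by~$M$ (so $|U|=2^d-2m$). Since $M$ is maximal, every edge of~$Q_d$ has at least one end vertex in~$W$, i.e.\ $U$ is an independent set of~$Q_d$. Hence every edge of~$Q_d$ incident with a vertex of~$U$ has its other end vertex in~$W$, and as every vertex of~$Q_d$ has degree~$d$, the number of edges of~$Q_d$ between~$U$ and~$W$ is exactly $d\,|U|$.

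Next I would estimate this same number of edges from the other side, summing over the edges of~$M$. Fix an edge $ww^i\in M$, where $i\in[d]$ is its direction. Apart from the edge~$ww^i$ itself, the edges of~$Q_d$ incident with~$w$ or~$w^i$ lead to the $2d-2$ vertices $w^j$ and~$(w^i)^j$ with $j\in[d]\setminus\{i\}$, and these are pairwise distinct because $Q_d$ is triangle-free and so $w$ and~$w^i$ have no common neighbor. For each fixed $j\in[d]\setminus\{i\}$ the two vertices $w^j$ and~$(w^i)^j$ differ only in the element~$i$, hence they are adjacent in~$Q_d$ (they are two opposite corners of the $4$-cycle $w,w^j,(w^i)^j,w^i$), and since $U$ is independent at most one of them lies in~$U$. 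Therefore at most $d-1$ of the $2d-2$ edges of~$Q_d$ leaving~$\{w,w^i\}$ have an end vertex in~$U$. Summing over all $m$ edges of~$M$, and noting that each edge of~$Q_d$ between~$U$ and~$W$ is counted exactly once (at its unique end vertex in~$W$, which lies in exactly one edge of~$M$), the number of edges of~$Q_d$ between~$U$ and~$W$ is at most $(d-1)m$.

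Comparing the two counts gives $d\,|U|\le (d-1)m$, that is, $|U|\le (d-1)m/d$. Plugging in $|U|=2^d-2m$ yields
\[
2^d \;=\; |U|+2m \;\le\; \frac{(d-1)m}{d}+2m \;=\; \frac{(3d-1)m}{d},
\]
and solving for~$m$ gives $m\ge \frac{d\,2^d}{3d-1}=f(d)$, as desired.

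I do not anticipate a genuine difficulty here; the one idea needed beyond routine counting is to pair up the $2d-2$ outside neighbors of a matching edge into $d-1$ adjacent pairs via the $4$-cycles of~$Q_d$, which is precisely what improves the naive estimate (each matching edge absorbs at most $2d-2$ boundary edges, giving only $m\ge \frac{d\,2^d}{4d-2}$) to the sharp bound $f(d)$.
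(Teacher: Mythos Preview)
Your proof is correct. Note, however, that the paper does not actually prove this lemma: it is quoted verbatim from the literature (Forcade, Lemma~3 in~\cite{MR321804}) and used as a black box, so there is no ``paper's own proof'' to compare against. Your double-counting argument---counting the $U$--$W$ edges once as $d\,|U|$ from the uncovered side, and bounding them by $(d-1)m$ from the matched side via the observation that the $2(d-1)$ outside neighbours of a matching edge $ww^i$ split into $d-1$ adjacent pairs $\{w^j,(w^i)^j\}$, each contributing at most one vertex of the independent set~$U$---is exactly the standard proof of this bound, and is complete as written.
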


The function~$f(d)$ defined in Lemma~\ref{lem:max-Qd} will be used in several computations in the rest of this section.
For dimension $d=6$, Havel and K\v{r}iv\'{a}nek~\cite{MR653356} improved the lower bound of $\lceil f(6)\rceil=23$ guaranteed by Lemma~\ref{lem:max-Qd} to~$24$, but we do not need this strengthening for our arguments.

\begin{lemma}
\label{lem:max-KQd}
Every maximal matching of~$K(Q_d)$ has at least $\frac{1}{4}|V(Q_d)|=2^{d-2}$ edges.
\end{lemma}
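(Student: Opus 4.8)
The plan is to reduce the statement to the elementary fact that the hypercube~$Q_d$ has no vertex cover of size less than~$2^{d-1}$, which in turn follows from the existence of a perfect matching in~$Q_d$.

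First I would record the key observation implicit in the definition of maximality: if $M$ is a maximal matching of~$K(Q_d)$, then $M$ covers at least one end vertex of every edge of~$Q_d$, so the set~$V(M)$ of vertices covered by~$M$ is a vertex cover of the graph~$Q_d$. Next, consider the perfect matching $N:=\{\,uu^1 : u\in V(Q_d),\ 1\notin u\,\}$ of~$Q_d$, which consists of $2^{d-1}$ pairwise disjoint edges (one edge in direction~$1$ for each vertex not containing~$1$). Since $V(M)$ is a vertex cover, each of these $2^{d-1}$ edges has an end vertex in~$V(M)$, and as the edges of~$N$ are pairwise disjoint these $2^{d-1}$ end vertices are pairwise distinct. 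Hence $|V(M)|\ge 2^{d-1}$. Finally, since $M$ is a matching we have $|V(M)|=2|M|$, so $|M|\ge 2^{d-2}=\frac{1}{4}|V(Q_d)|$, as claimed.

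I do not expect any real obstacle here. The only subtlety worth stating is the (purely definitional) point that ``maximal'' for a matching of~$K(Q_d)$ refers to covering the edges of~$Q_d$ rather than of~$K(Q_d)$, so that $V(M)$ is forced to be a vertex cover of the \emph{sparse} graph~$Q_d$. One could alternatively replace the explicit matching~$N$ and the counting argument by an appeal to K\H{o}nig's theorem (the minimum vertex cover of the bipartite graph~$Q_d$ equals its maximum matching, namely~$2^{d-1}$), but the direct argument above is self-contained. Note also that the length-based Lemmas~\ref{lem:matching-length} and~\ref{lem:max-Qd} do not seem to help here, since they bound the total length~$\ell(M)$ rather than the number of edges~$|M|$, and a single long edge of~$K(Q_d)$ can contribute a lot to~$\ell(M)$ while contributing only~$1$ to~$|M|$. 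Finally, the bound is tight: matching the $2^{d-1}$ even vertices of~$Q_d$ into $2^{d-2}$ arbitrary pairs in~$K(Q_d)$ yields a maximal matching, since every edge of~$Q_d$ has an even end vertex.
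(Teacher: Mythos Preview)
Your proof is correct and essentially identical to the paper's: your perfect matching~$N=\{uu^1:1\notin u\}$ is precisely the layer~$L$ in direction~$1$ that the paper uses, and both arguments conclude by noting that $M$ must cover at least one endpoint of each of its $2^{d-1}$ disjoint edges. The only difference is phrasing (you frame it via vertex covers, the paper counts directly), and your remark on tightness matches the paper's as well.
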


Note that this lower bound is attained by any matching that covers all vertices of the same parity, and none of the opposite parity.

\begin{proof}
Let $M$ be a maximal matching of~$K(Q_d)$, and consider the layer~$L$ in direction~1, which satisfies $|L|=\frac{1}{2}|V(Q_d)|=2^{d-1}$.
$M$ covers at least one of the end vertices of every edge of~$L$, so we have $|M|\ge |L|/2=\frac{1}{4}|V(Q_d)|=2^{d-2}$.
\end{proof}

\begin{lemma}
\label{lem:maximal-cut}
For every maximal matching~$M$ of~$K(Q_d)$ there is a direction~$i \in [d]$ such that $|M^i_-| \ge 3$ for $d = 5$ and $|M^i_-| \ge 4$ for $d \ge 6$.
\end{lemma}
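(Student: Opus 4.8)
The plan is to combine the lower bound on the size of a maximal matching (Lemma~\ref{lem:max-KQd}, which gives $|M|\ge 2^{d-2}$) with the identity $\ell(M)=\sum_{i\in[d]}|M^i_-|$ from~\eqref{eq:ellF}, together with an upper bound on how much of $M$ can ``stay inside'' the subcubes in any single direction. First I would observe that if $M$ were a matching of $Q_d$ itself, each edge would contribute $1$ to exactly one coordinate sum $|M^i_-|$, so $\sum_i |M^i_-| = |M| \ge f(d)$ by Lemma~\ref{lem:max-Qd}; averaging over the $d$ directions then forces some $|M^i_-|\ge f(d)/d = 2^d/(3d-1)$, which is $\ge 4$ for $d\ge 6$ and $\ge 3$ for $d=5$ (checking: $2^5/14 = 32/14 > 2$, so $\ge 3$; $2^6/17 = 64/17 > 3$, so $\ge 4$; and the ratio only grows). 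Since long edges contribute $\ell(e)\ge 2\ge 1$ to the sum, passing from $Q_d$ to $K(Q_d)$ only helps, but I must be careful: for a general maximal matching of $K(Q_d)$ I cannot directly invoke $|M|\ge f(d)$, since Lemma~\ref{lem:max-Qd} is about $Q_d$.

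This is where Lemma~\ref{lem:matching-length} enters, and it is the crux. Given a maximal matching $M$ of $K(Q_d)$, that lemma produces a maximal matching $M'$ of $Q_d$ with $\ell(M)\ge \ell(M')$. Now $M'$ lives in $Q_d$, so every edge of $M'$ is short and $\ell(M')=|M'|\ge f(d)$ by Lemma~\ref{lem:max-Qd}. Hence $\sum_{i\in[d]}|M^i_-| = \ell(M)\ge \ell(M') = |M'|\ge f(d)=\frac{d\,2^d}{3d-1}$. By averaging, there is a direction $i$ with $|M^i_-|\ge \frac{2^d}{3d-1}$. For $d=5$ this is $\frac{32}{14}\approx 2.29$, so $|M^i_-|\ge 3$; for $d=6$ it is $\frac{64}{17}\approx 3.76$, so $|M^i_-|\ge 4$; for $d\ge 7$ the bound $\frac{2^d}{3d-1}$ is increasing in $d$ and already exceeds $4$ (at $d=7$ it is $128/20=6.4$), so $|M^i_-|\ge 4$ a fortiori.

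The only routine verification left is the monotonicity claim for $d\ge 7$: one checks that $\frac{2^{d+1}}{3(d+1)-1}\big/\frac{2^d}{3d-1} = \frac{2(3d-1)}{3d+2}\ge 1$ precisely when $6d-2\ge 3d+2$, i.e. $d\ge 4/3$, which holds throughout. I do not anticipate a genuine obstacle here; the main conceptual step is simply recognizing that Lemma~\ref{lem:matching-length} is exactly the tool needed to transfer the $Q_d$-bound of Lemma~\ref{lem:max-Qd} to the $K(Q_d)$ setting via the length functional, after which the result is pure arithmetic. One small point to state carefully is that $|M^i_-|$ is an integer, so a bound like $\ge 2.29$ or $\ge 3.76$ rounds up to the claimed integer thresholds.
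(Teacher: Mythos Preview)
Your proposal is correct and follows essentially the same route as the paper's proof: apply Lemma~\ref{lem:matching-length} to pass from $M$ to a maximal matching $M'$ of $Q_d$ with $\ell(M)\ge\ell(M')=|M'|$, bound $|M'|\ge f(d)$ via Lemma~\ref{lem:max-Qd}, and then average using~\eqref{eq:ellF} to find a direction with $|M^i_-|\ge\lceil f(d)/d\rceil=\lceil 2^d/(3d-1)\rceil$. The paper's write-up is terser (it simply asserts that $\lceil f(d)/d\rceil$ evaluates to $3$ for $d=5$ and to $\ge 4$ for $d\ge 6$), whereas you additionally verify the monotonicity explicitly; the initial mention of Lemma~\ref{lem:max-KQd} is a red herring you correctly abandon.
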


\begin{proof}
Let $M$ be a maximal matching of~$K(Q_d)$.
By Lemma~\ref{lem:matching-length} there is a maximal matching~$M'$ of~$Q_d$ with $\ell(M)\ge \ell(M')=|M'|$.
Furthermore, Lemma~\ref{lem:max-Qd} yields that $|M'|\ge f(d)$.
Combining these two observations proves $\ell(M)\ge f(d)$.
From~\eqref{eq:ellF} we obtain that there is a direction~$i\in[d]$ with $|M^i_-|\ge \lceil f(d)/d\rceil$.
For $d=5$ and $d\ge 6$ the function~$\lceil f(d)/d\rceil$ evaluates to~3 and~$\ge 4$, respectively.
\end{proof}

The next lemma describes how adding an edge to a matching of~$K(Q_d)$ can violate property~\propH{}. 

\begin{lemma}
\label{lem:H-violation}
Let $d\geq 5$, and let $M$ be matching of $K(Q_d)$ that avoids $z=\emptyset$ and satisfies property~\propH{} in Theorem~\ref{thm:forbidden}.
Furthermore, let $i\in [d]$ and $u,u^i$ with $u\in V(Q^i_0)\setminus\{z\}$ be two vertices avoided by~$M$.
Then $M \cup \{uu^i\}$ violates property~\propH{} if and only if one of the following two conditions holds:
\begin{enumerate}[label=(\roman*),leftmargin=8mm, noitemsep, topsep=1pt plus 1pt]
\item $M$ contains a half-layer in direction~$i$ and all vertices of~$Q^i_0$ except~$z$ and~$u$ are covered by~$M$;
\item $M$ contains a near half-layer in direction~$i$ with extension vertices~$u$ and~$u^i$, and all vertices of~$Q^i_0$ except~$z$ and~$u$ are covered by~$M$.
\end{enumerate}
In both cases $|M^i_-|$ is even.
Furthermore, we have $|M^i_-|\ge \frac{1}{4}|V(Q_d)|=2^{d-2}$ and $|M|\ge \frac{3}{8}|V(Q_d)|-1=3\cdot 2^{d-3}-1$.
\end{lemma}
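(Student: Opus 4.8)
The plan is to prove the biconditional by analysing which direction can witness a violation of property~\propH{}, and then to read off the quantitative bounds from the structural description obtained along the way. Throughout I would use the following simple fact: since $M$ avoids $z=\emptyset$, which is an even vertex lying in every subcube~$Q^j_0$, any half-layer or near half-layer in direction~$j$ contained in~$M$ must be the one covering the \emph{odd} vertices of~$Q^j_0$, because the other would cover~$z$. For the ``only if'' direction, suppose $M\cup\{uu^i\}$ violates~\propH{}, witnessed by some direction~$j\in[d]$: $M\cup\{uu^i\}$ contains a half-layer~$H$ in direction~$j$, yet every vertex of $V(Q^j_0)\setminus\{z\}$ is covered by $M\cup\{uu^i\}$. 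First I would show $j=i$. If $j\neq i$, then every edge of~$H$ has direction~$j\neq i$, so $uu^i\notin H$ and hence $H\subseteq M$; property~\propH{} for~$M$ then supplies a vertex $w\in V(Q^j_0)\setminus\{z\}$ avoided by~$M$, which is even since $H\subseteq M$ covers all odd vertices of~$Q^j_0$. Now $u$ and $u^i$ agree in coordinate~$j$, so they lie on the same side of the layer in direction~$j$; if both lay in~$Q^j_0$ they would both be avoided by~$M$ hence both even, contradicting that $u$ and~$u^i$ have opposite parity, so both lie in~$Q^j_1$, whence $w\notin\{u,u^i\}$ is still avoided by $M\cup\{uu^i\}$ --- a contradiction. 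Thus $j=i$.

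With $j=i$ I would distinguish whether $uu^i\in H$. If $uu^i\notin H$, then $H\subseteq M$ is a half-layer in direction~$i$ covering all odd vertices of~$Q^i_0$; since $u\in V(Q^i_0)$ is avoided by~$M$ it is even, and since the only vertex of~$Q^i_0$ newly covered by $uu^i$ is~$u$, the hypothesis that $V(Q^i_0)\setminus\{z\}$ is covered by $M\cup\{uu^i\}$ means exactly that all vertices of~$Q^i_0$ except~$z$ and~$u$ are covered by~$M$; this is case~(i). If $uu^i\in H$, then $N:=H\setminus\{uu^i\}\subseteq M$ is a near half-layer in direction~$i$ whose extension vertices are the endpoints of~$uu^i$, namely $u$ and~$u^i$; $N$ cannot be the ``even'' near half-layer, for then (as $u\neq z$) the edge~$zz^i$ would still lie in~$N\subseteq M$ and cover~$z$, so $N$ covers all odd vertices of~$Q^i_0$ except~$u$, forcing $u$ odd, and the same counting gives that all vertices of~$Q^i_0$ except~$z$ and~$u$ are covered by~$M$; this is case~(ii). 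Conversely, the ``if'' direction is immediate: in case~(i) the half-layer in direction~$i$ already in~$M$ survives in $M\cup\{uu^i\}$, and in case~(ii) the set $N\cup\{uu^i\}$ is a full half-layer in direction~$i$ contained in $M\cup\{uu^i\}$; in both cases $uu^i$ additionally covers~$u$, so all of $V(Q^i_0)\setminus\{z\}$ is covered and \propH{} fails in direction~$i$.

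For the quantitative part, note that in both cases all vertices of~$Q^i_0$ except~$z$ and~$u$ are covered by~$M$, so counting incidences of~$M$ with~$V(Q^i_0)$ yields $2|M^i_0|+|M^i_-|=|V(Q^i_0)|-2=2^{d-1}-2$; in particular $|M^i_-|$ is even. Since $M^i_-$ contains a near half-layer we get $|M^i_-|\ge 2^{d-2}-1$, and as $2^{d-2}-1$ is odd, evenness forces $|M^i_-|\ge 2^{d-2}$. Finally, counting covered vertices on the two sides and using that the $|M^i_-|$ crossing edges contribute $|M^i_-|$ distinct covered vertices in~$Q^i_1$, we obtain $2|M|=|V(M)|=\bigl(2^{d-1}-2\bigr)+|V(M)\cap V(Q^i_1)|\ge\bigl(2^{d-1}-2\bigr)+2^{d-2}=3\cdot 2^{d-2}-2$, so $|M|\ge 3\cdot 2^{d-3}-1$.

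I expect the only real difficulty to be organizational: making sure the ``only if'' analysis is exhaustive, i.e.\ that the witnessing direction must be~$i$ and that, given this, cases~(i) and~(ii) are the only possibilities. No deeper ingredient seems necessary --- not even Lemma~\ref{lem:hlayers-number} --- and the whole argument rests on the parity bookkeeping forced by the avoided vertex~$z=\emptyset$ together with the opposite parities of~$u$ and~$u^i$.
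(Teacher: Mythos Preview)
Your proof is correct and follows the same approach as the paper, which simply asserts that the biconditional ``follows directly from the definition of property~\propH{}'' and then counts edges case-by-case for the quantitative bounds. Your treatment is in fact more thorough: you explicitly verify that the witnessing direction of a violation must be~$i$ (a point the paper elides), and your unified incidence count $2|M^i_0|+|M^i_-|=2^{d-1}-2$ handles the parity claim and both lower bounds at once, whereas the paper splits into cases~(i) and~(ii) for the final estimates.
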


\begin{proof}
Statements~(i) and~(ii) follow directly from the definition of property~\propH{}.
The next claim follows from the fact that $V(Q^i_0)\setminus\{z,u\}$ has even cardinality, and every edge in~$M^i_-$ covers exactly one of the vertices from this set, whereas every edge in~$M^i_0$ covers two of them.
To prove the last part we argue as follows:
If (i) holds, then $M$ contains $2^{d-2}$ edges in the half-layer in direction~$i$, which all belong to~$M^i_-$, plus at least $(2^{d-2}-2)/2=2^{d-3}-1$ edges that cover all remaining vertices in~$Q^i_0$ except~$z$ and~$u$, which in total gives at least $3\cdot 2^{d-3}-1$ edges in~$M$.
If (ii) holds, then $M$ contains $2^{d-2}-1$ edges in the near half-layer in direction~$i$, which all belong to~$M^i_-$, plus at least $\lceil (2^{d-2}-1)/2\rceil=2^{d-3}$ edges that cover all remaining vertices in~$Q^i_0$ except~$z$ and~$u$, at least one of which belongs to~$M^i_-$, which in total gives at least $2^{d-2}$ edges in~$M^i_-$ and at least $3\cdot 2^{d-3}-1$ edges in~$M$.
\end{proof}

We say that a matching~$M$ of~$K(Q_d)$ that avoids~$z=\emptyset$ and satisfies property~\propH{} is \defi{\propH{}-maximal} if for any two vertices $u,u^i$ for some~$i\in[d]$ that are avoided by~$M$ and different from~$z$ the matching $M \cup \{uu^i\}$ violates property~\propH{}.
Note that any maximal matching that avoids~$z$ is also~\propH{}-maximal.

The next lemma guarantees us a direction such that many edges of an \propH{}-maximal matching belong to the layer in this direction.
In the proof of Theorem~\ref{thm:forbidden} we will choose this direction for splitting the cube into two subcubes and applying induction.

\begin{lemma}
\label{lem:H-maximal}
For every \propH{}-maximal matching~$M$ of~$K(Q_d)$ there is a direction~$i\in [d]$ such that~$|M^i_-| \ge 3$ for $d = 5$ and~$|M^i_-| \ge 4$ for $d\ge 6$.
\end{lemma}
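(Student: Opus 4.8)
The plan is to follow the proof of Lemma~\ref{lem:maximal-cut}, making one adjustment to account for the fact that a \propH{}-maximal matching need not be maximal. Let $M$ be a \propH{}-maximal matching of $K(Q_d)$ with $d\ge 5$, let $z=\emptyset$, and write $U:=V(Q_d)\setminus V(M)$ for the set of vertices avoided by $M$; note $z\in U$.

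First I would dispose of one case. Suppose $Q_d$ contains an edge $vv^i$, with $v\in V(Q^i_0)\setminus\{z\}$, whose two end vertices $v$ and $v^i$ both lie in $U$. Since $M$ is \propH{}-maximal, $M\cup\{vv^i\}$ violates property~\propH{}, so Lemma~\ref{lem:H-violation} applies and yields $|M^i_-|\ge 2^{d-2}$. As $2^{d-2}\ge 8$ for every $d\ge 5$, this already beats the required bound ($3$ for $d=5$ and $4$ for $d\ge 6$), and we are done. Hence I may assume from now on that \emph{every} edge of $Q_d$ with both end vertices in $U$ is incident to $z$. Equivalently, $U\setminus\{z\}$ is an independent set of $Q_d$, so the induced subgraph $Q_d[U]$ consists of a (possibly empty) star centered at $z$ together with isolated vertices.

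Next I would use this structure to produce a maximal matching $N$ of $K(Q_d)$ that contains $M$ and satisfies $|N|\le|M|+1$, as follows. Recall that $N$ is a maximal matching of $K(Q_d)$ exactly when no edge of $Q_d$ has both end vertices in $V(Q_d)\setminus V(N)$. If $z$ has no neighbor in $U$, then $Q_d[U]$ has no edges and $N:=M$ is already maximal. Otherwise, pick $j\in[d]$ with $z^j=\{j\}\in U$ and set $N:=M\cup\{zz^j\}$; since deleting $z$ destroys every edge of the star $Q_d[U]$, the set $V(Q_d)\setminus V(N)=U\setminus\{z,z^j\}$ is independent in $Q_d$, so $N$ is maximal. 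In both cases $N\setminus M$ consists of at most one edge of length~$1$, so by~\eqref{eq:ellF} we have $\ell(M)\ge\ell(N)-1$. Applying Lemma~\ref{lem:matching-length} to $N$ gives a maximal matching of $Q_d$ of size at most $\ell(N)$, and Lemma~\ref{lem:max-Qd} bounds that size below by $f(d)$; hence $\ell(M)\ge f(d)-1$. Since $\ell(M)=\sum_{i\in[d]}|M^i_-|$ by~\eqref{eq:ellF}, some direction $i$ satisfies $|M^i_-|\ge\lceil(f(d)-1)/d\rceil$, and a short computation shows $(f(5)-1)/5=73/35>2$, so $\lceil(f(5)-1)/5\rceil=3$, while for $d\ge 6$ one checks $(f(d)-1)/d>3$, so $\lceil(f(d)-1)/d\rceil\ge 4$. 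This provides the desired direction.

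The one point that needs care is the loss of one unit when passing from the maximal matching $N$ back to $M$: one must verify that the slightly weaker estimate $\ell(M)\ge f(d)-1$ still rounds up to $3$ (respectively $4$) after division by $d$, which it does, albeit only barely when $d=5$. The other ingredients---checking that Lemma~\ref{lem:H-violation} is applicable in the first case, and that $Q_d[U]$ is a star so that $N$ is genuinely maximal---are routine.
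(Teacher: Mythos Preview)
Your proof is correct and follows essentially the same approach as the paper's proof: handle the case of an uncovered $Q_d$-edge away from~$z$ via Lemma~\ref{lem:H-violation}, and otherwise pass to a maximal matching and invoke Lemmas~\ref{lem:matching-length} and~\ref{lem:max-Qd}. The only cosmetic difference is that the paper splits your second paragraph into two separate cases (either $M$ itself is maximal, or adding a single edge~$zz^j$ makes it maximal), whereas you unify them by always using the bound $\ell(M)\ge f(d)-1$; since this weaker bound still rounds up correctly, your streamlining is harmless.
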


\begin{proof}
There are three cases to consider.

The first case is that there are vertices~$u,u^i$ with $i\in[d]$ that are avoided by~$M$ and different from~$z$.
In this case $M\cup\{uu^i\}$ violates property~\propH{}, and applying Lemma~\ref{lem:H-violation} yields $|M^i_-|\ge 2^{d-2}$.
For $d=5$ and $d\ge 6$ this lower bound evaluates to~8 and at least~16, respectively.

The second case is that $M$ is maximal, and then the claim follows from Lemma~\ref{lem:maximal-cut}.

The third case is that one of the neighbors~$u$ of~$z$ in~$Q_d$ is avoided by~$M$ and $M\cup\{uz\}$ is maximal.
In this case, by Lemmas~\ref{lem:matching-length} and~\ref{lem:max-Qd}, we obtain~$\ell(M)\ge f(d)-1$, and consequently there is a direction~$i\in[d]$ with $|M^i_-|\ge \lceil (f(d)-1)/d\rceil$.
For $d=5$ and $d\ge 6$ the function~$\lceil (f(d)-1)/d\rceil$ evaluates to~3 and at least~4, respectively.

The claimed lower bounds hold in all three cases, which proves the lemma.
\end{proof}

\section{Proof of Theorem~\ref{thm:cycle}}
\label{sec:cycle}

In this section, we prove Theorem~\ref{thm:cycle}.

\begin{proof}[Proof of Theorem~\ref{thm:cycle}]
For $2 \le d \le 4$, we verified the theorem with computer help.
For details, see Section \ref{sec:basis}.

For $d\ge 5$, we prove Theorem~\ref{thm:cycle} assuming that Theorem~\ref{thm:forbidden} holds for dimension~$d$.

If $M$ is perfect, we apply Theorem~\ref{thm:kreweras}.
Otherwise, if $M$ has no half-layers in any direction, then we choose a vertex~$u$ avoided by~$M$.
Otherwise, if $M$ contains a half-layer in a direction~$i\in[d]$ such that $M$ avoids at least two vertices of~$Q^i_0$ or~$Q^i_1$, then we choose a vertex~$u$ avoided by~$M$ in~$Q^i_0$ or~$Q^i_1$, respectively.
By Lemma~\ref{lem:hlayers-number}~(i) and the fact that $M$ is not perfect, $M$ contains no other half-layer.
Consequently, in the latter two cases, $M$ and~$u$ satisfy property~\propH{} (where $u$ plays the role of~$z=\emptyset$).
We may thus apply Theorem~\ref{thm:forbidden} to obtain a cycle that extends~$M$ and avoids~$u$.

In the remaining case, $M$ contains a half-layer~$L$ in a direction~$i\in[d]$ and avoids exactly one vertex~$u$ in $Q^i_0$ and
one vertex~$v$ in~$Q^i_1$.
If $v=u^i$, we apply Theorem~\ref{thm:kreweras} to the perfect matching $M \cup \{uv\}$.
Otherwise consider the modified matching~$N:=(M \setminus\{u^i u^{iM}\}) \cup \{u u^{iM}\}$, which avoids two vertices in~$Q^i_1$, namely $u^i$ and~$v$.
By Lemma~\ref{lem:hlayers-number}~(i) and the fact that $N$ is not perfect, $N$ contains no half-layer other than~$L$.
Consequently, $N$ and~$u^i$ satisfy property~\propH{} (where $u^i$ plays the role of~$z=\emptyset$), so applying Theorem~\ref{thm:forbidden} gives a cycle~$C$ that extends~$N$ and avoids~$u^i$.
Then, the cycle $(C \setminus\{u u^{iM}\}) \cup \{u u^i, u^i u^{iM}\}$ extends~$M$.
\end{proof}

\section{Proof of Theorem~\ref{thm:forbidden}}
\label{sec:forbidden}

The statement in Theorem~\ref{thm:forbidden} is an equivalence, and we prove both directions of the implication in the following subsections.
We first consider the forward implication, then the reverse implication for $d\geq 6$, and then the reverse implication for $d=5$ (settled with computer help), followed by some remarks about implementation details for the computer verification.

\subsection{Forward implication}
\label{sec:forward}

\begin{proof}[Proof of Theorem~\ref{thm:forbidden} ($\Rightarrow$)]
For the sake of contradiction suppose that $M$ violates property~\propH{} and that there is a cycle~$C$ that extends~$M$ and avoids~$z$.
Then for some $i\in [d]$ there is a half-layer in direction~$i$ and all vertices of~$Q^i_0$ except~$z$ are covered by~$M$.
Every odd vertex of~$Q^i_0$ is an end vertex of an edge of the half-layer, and is therefore connected in~$C\setminus M$ to an even vertex of~$Q^i_0$.
As every vertex of~$Q^i_0$ except~$z$ is covered by~$M$, no two of these even vertices are the same.
It follows that~$C$ visits all odd vertices of~$Q^i_0$ and at least as many even vertices, but at the same time it avoids the even vertex~$z$, a contradiction.
\end{proof}

\subsection{Reverse implication (induction step \texorpdfstring{$d-1\rightarrow d$}{d-1 to d} for \texorpdfstring{$d\ge 6$}{d>=6})}

Our proof for the reverse implication in Theorem~\ref{thm:forbidden} uses induction on the dimension~$d$, and it follows a similar strategy as Fink's proof of Theorem~\ref{thm:kreweras}, namely to choose a direction~$i\in[d]$ and to split~$Q_d$ into subcubes~$Q^i_0$ and~$Q^i_1$, to which we apply induction.
However, the requirement for the extending cycle to avoid a prescribed vertex causes substantial additional technical complications.
In particular, we need to deal with the case that an odd number of edges is present in~$M^i_-$, a case that never occurs if the matching~$M$ is perfect.
In that case, we need to add one additional edge in direction~$i$ to be included in the extending cycle (as the cycle has to cross between~$Q^i_0$ and~$Q^i_1$ an even number of times).
We thus need to choose the direction~$i$ and the added edge so that no obstacles for applying induction in the subcubes, namely half-layers of~$Q^i_0$ or~$Q^i_1$, are created.

We first present the induction step~$d-1\rightarrow d$ for $d\ge 6$, whereas the base case~$d=5$ is settled in the next section.

\begin{proof}[Proof of Theorem~\ref{thm:forbidden} ($\Leftarrow$) for $d\ge 6$]
We prove Theorem~\ref{thm:forbidden} inductively for $d\ge 6$, assuming that Theorems~\ref{thm:cycle} and~\ref{thm:forbidden} hold for dimension~$d-1$.

Let~$M$ be a matching of~$K(Q_d)$ that avoids~$z$ and satisfies property~\propH{}.
We assume w.l.o.g.\ that $M$ is \propH{}-maximal (recall the definition from before Lemma~\ref{lem:H-maximal}).

We select a direction~$i\in[d]$ according to the following rules applied in this order:
\begin{enumerate}[leftmargin=8mm, noitemsep, topsep=1pt plus 1pt]
\item If $M$ contains a $z$-dangerous (covered near) quad-layer, we choose $i$ to be its direction;
\item otherwise, if $M$ contains a quad-layer, we choose $i$ to be its direction;
\item otherwise, we choose a direction $i \in [d]$ that maximizes the quantity~$|M^i_-|$.
\end{enumerate}
These rules guarantee the following properties:
\begin{enumerate}[label=(\roman*),leftmargin=8mm, noitemsep, topsep=1pt plus 1pt]
\item $M^i_0$ contains no (covered near) half-layers of~$Q^i_0$.
\item If $M^i_1$ contains a half-layer of~$Q^i_1$, then $M^i_-$ contains a (covered near) quad-layer of~$Q_d$.
\item If rule~(1) or~(2) applies we have~$|M^i_-|\ge 7$, and if rule~(3) applies we have~$|M^i_-|\ge 4$.
\end{enumerate}

Proof of~(i):
If $M^i_0$ did contain a (covered near) half-layer of~$Q^i_0$, then there would be a $z$-dangerous (covered near) quad-layer of~$Q_d$ contained in~$M$, and by Lemma~\ref{lem:hlayers-number}~(ii) these can occur in at most one direction, which would have been selected by rule~(1) with the highest priority.

Proof of~(ii):
This situation is illustrated in Figure~\ref{fig:proof2}.
Suppose that $M^i_1$ contains a half-layer~$L$ of~$Q^i_1$.
The direction~$i$ was chosen differently from the direction of~$L$, which means that rule~(1) or~(2) were applied, and this proves the claim.

Proof of~(iii):
The first part follows as a near quad-layer of~$Q_d$, $d\ge 6$, has $2^{d-3}-1\ge 7$ edges.
The second part follows from Lemma~\ref{lem:H-maximal}, using the assumption that~$M$ is \propH{}-maximal.

Let $A_0:=V(Q^i_0) \cap V(M^i_-)$.
From~(iii) we obtain $|A_0|\ge 4$.

\vspace{1ex}
\textbf{Case~1:} $|M^i_-|\geq 4$ is even.
By~(i), we can apply Lemma~\ref{lem:avoid-layer} and obtain a perfect matching~$P_0$ on~$K(A_0)$ such that $M^i_0 \cup P_0$ contains no half-layers.
Applying Theorem~\ref{thm:forbidden} inductively to~$K(Q^i_0)$, we obtain that $M^i_0\cup P_0$ can be extended to a cycle~$C_0$ that avoids~$z$.
Let $P_1$ be the shortcut edges of the linear forest $(C_0\setminus P_0)\cup M^i_-$, which all belong to~$K(Q^i_1)$ and form a perfect matching on~$K(A_1)$, where $A_1:=V(Q^i_1)\cap V(M^i_-)$.
Applying Theorem~\ref{thm:cycle} inductively to~$K(Q^i_1)$, we obtain that $M^i_1\cup P_1$ can be extended to a cycle~$C_1$.
Observe that $C:=(C_0\setminus P_0)\cup M^i_-\cup (C_1\setminus P_1)$ is a single cycle in~$K(Q_d)$ that extends~$M$ and avoids~$z$, as desired.
Indeed, $C$ is obtained from~$C_1$ by replacing each edge of~$P_1$ by the path between the same end vertices starting and ending with an edge from~$M^i_-$ plus edges of~$C_0\setminus P_0$ in between.

\setlength{\fboxrule}{0.6pt}
\setlength{\fboxsep}{1.5pt}

\vspace{1ex}
\textbf{Case~2:} $|M^i_-|\geq 5$ is odd.
We first give an outline of the construction steps in this case.
Several details are filled in subsequently, and we also later verify that all assumptions needed to apply the various theorems are indeed satisfied.
The points with missing information in the outline are labeled \fbox{1}--\fbox{5}.

The cycle we construct that extends~$M$ uses one additional edge between~$Q^i_0$ and~$Q^i_1$ in addition to the edges~$M^i_-$.
For this purpose we carefully choose a vertex~$u$ in~$Q^i_0$ different from~$z$ such that $uu^i\notin M$ \fbox{1}, and we take $uu^i$ as the edge to be included in the cycle.
Depending on whether and how its end vertices~$u$ and~$u^i$ are covered by~$M$, this creates different conditions in~$Q^i_0$ and~$Q^i_1$ for the induction step.
Specifically, there are three possible cases for~$u$, namely $u \notin V(M)$, $u^M \in V(Q^i_0)$, or $u^M \in V(Q^i_1)$, and similarly three cases for $u^i$; see Figure~\ref{fig:matchingN}.
Because of our assumption that $M$ is \propH{}-maximal, the case $u,u^i\notin V(M)$ cannot occur, as Lemma~\ref{lem:H-violation} would yield that~$|M^i_-|$ is even.
Consequently, for every $u\in V(Q^i_0)$ with $u\neq z$ we have that $u\notin V(M)$ implies that $u^i\in V(M)$.

\tikzset{
  level 1/.style={sibling distance=40mm},
  level 2/.style={sibling distance=10mm},
  vertex/.style = {draw, circle, minimum size=22pt,inner sep=0pt, align=center},
  edge/.style = {line width=0.5mm},
}

\begin{figure}
\begin{tikzpicture}[scale=1]

\node at (0,8.5) {$u \notin V(M)$};
\node at (5,8.5) {$u^M \in V(Q^i_0)$};
\node at (10,8.5) {$u^M \in V(Q^i_1)$};

\node[rotate=90] at (-2,6) {$u^{i} \notin V(M)$};
\node[rotate=90] at (-2,0) {$u^{iM} \in V(Q^i_1)$};
\node[rotate=90] at (-2,-6) {$u^{iM} \in V(Q^i_0)$};

\begin{scope}[shift={(0,5.5)}]
\node at (0,0) {does not occur};
\end{scope}

\begin{scope}[shift={(5,5.5)}]
\draw[dashed] (0,-2) -- (0,2);
\node[vertex,blue] at (-1, 0) (u) {$u$};
\node[vertex] at (1, 0) (ui) {$u^i$};
\node[vertex] at (-1, -2) (um) {$u^M$};
\draw[edge,dotted] (u) -- (ui);
\draw[edge,red] (u) -- (um);
\draw[edge,green] (ui) -- (um);
\node at (-1,2) {$Q^i_0$};
\node at (1,2) {$Q^i_1$};
\node at (0,2.2) {$i$};
\end{scope}

\begin{scope}[shift={(10,5.5)}]
\draw[dashed] (0,-2) -- (0,2);
\node[vertex,blue] at (-1, 0) (u) {$u$};
\node[vertex] at (1, 0) (ui) {$u^i$};
\node[vertex] at (1, -2) (um) {$u^M$};
\draw[edge,dotted] (u) -- (ui);
\draw[edge,red] (u) -- (um);
\draw[edge,green] (ui) -- (um);
\end{scope}

\begin{scope}[shift={(0,0)}]
\draw[dashed] (0,-2) -- (0,2);
\node[vertex] at (-1, 0) (u) {$u$};
\node[vertex,blue] at (1, 0) (ui) {$u^i$};
\node[vertex] at (1, 2) (uim) {$u^{iM}$};
\draw[edge,dotted] (u) -- (ui);
\draw[edge,red] (ui) -- (uim);
\draw[edge,green] (u) -- (uim);
\end{scope}

\begin{scope}[shift={(5,0)}]
\draw[dashed] (0,-2) -- (0,2);
\node[vertex,blue] at (-1, 0) (u) {$u$};
\node[vertex,blue] at (1, 0) (ui) {$u^i$};
\node[vertex] at (1, 2) (uim) {$u^{iM}$};
\node[vertex] at (-1, -2) (um) {$u^M$};
\draw[edge,dotted] (u) -- (ui);
\draw[edge,red] (ui) -- (uim);
\draw[edge,red] (u) -- (um);
\draw[edge,green] (um) -- (uim);
\end{scope}

\begin{scope}[shift={(10,0)}]
\draw[dashed] (0,-2) -- (0,2);
\node[vertex,blue] at (-1, 0) (u) {$u$};
\node[vertex,blue] at (1, 0) (ui) {$u^i$};
\node[vertex] at (1, 2) (uim) {$u^{iM}$};
\node[vertex] at (1, -2) (um) {$u^M$};
\draw[edge,dotted] (u) -- (ui);
\draw[edge,red] (ui) -- (uim);
\draw[edge,red] (u) -- (um);
\draw[edge,green] (uim) to [bend left] (um);
\end{scope}

\begin{scope}[shift={(0,-5.5)}]
\draw[dashed] (0,-2) -- (0,2);
\node[vertex] at (-1, 0) (u) {$u$};
\node[vertex,blue] at (1, 0) (ui) {$u^i$};
\node[vertex] at (-1, 2) (uim) {$u^{iM}$};
\draw[edge,dotted] (u) -- (ui);
\draw[edge,red] (ui) -- (uim);
\draw[edge,green] (u) -- (uim);
\end{scope}

\begin{scope}[shift={(5,-5.5)}]
\draw[dashed] (0,-2) -- (0,2);
\node[vertex,blue] at (-1, 0) (u) {$u$};
\node[vertex,blue] at (1, 0) (ui) {$u^i$};
\node[vertex] at (-1, 2) (uim) {$u^{iM}$};
\node[vertex] at (-1, -2) (um) {$u^M$};
\draw[edge,dotted] (u) -- (ui);
\draw[edge,red] (ui) -- (uim);
\draw[edge,red] (u) -- (um);
\draw[edge,green] (uim) to [bend right] (um);
\end{scope}

\begin{scope}[shift={(10,-5.5)}]
\draw[dashed] (0,-2) -- (0,2);
\node[vertex,blue] at (-1, 0) (u) {$u$};
\node[vertex,blue] at (1, 0) (ui) {$u^i$};
\node[vertex] at (-1, 2) (uim) {$u^{iM}$};
\node[vertex] at (1, -2) (um) {$u^M$};
\draw[edge,dotted] (u) -- (ui);
\draw[edge,red] (ui) -- (uim);
\draw[edge,red] (u) -- (um);
\draw[edge,green] (um) -- (uim);
\end{scope}

\end{tikzpicture}
\caption{Illustration of the definition of the matching~$N$ obtained by modifying~$M$, depending on whether $u \notin V(M)$, $u^M \in V(Q^i_0)$ or $u^M \in V(Q^i_1)$ (three columns) and similarly for $u^i$ (three rows).
The dotted black line is the non-edge $uu^i\notin M$.
The red edges from~$M$ are removed, and the green edge is added to~$N$.
Blue vertices have to be avoided by the cycles~$C_0$ and~$C_1$ that extend~$N^i_0\cup P_0$ and~$N^i_1\cup P_1$, respectively.
}
\label{fig:matchingN}
\end{figure}
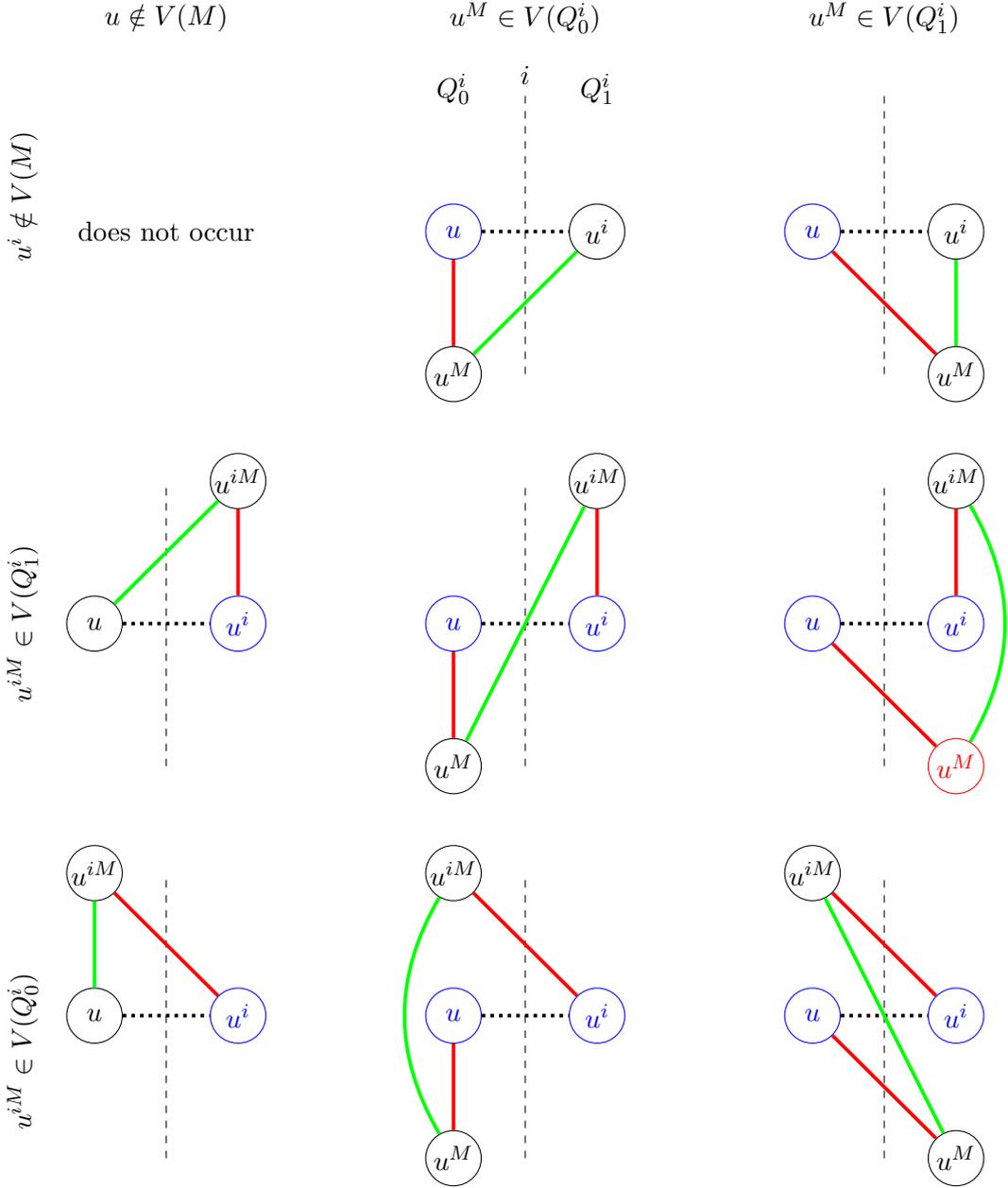

We create a modified matching~$N$ from~$M$ as follows; see Figure~\ref{fig:matchingN}:
If~$u \in V(M)$ we remove the edge~$uu^M$, and if~$u^i \in V(M)$ we remove the edge~$u^iu^{iM}$.
Furthermore, we add the edge~$vw$ where
\begin{equation*}
v:=\begin{cases} u & u\notin V(M), \\ u^M & u\in V(M), \end{cases} \quad\text{and}\quad w:=\begin{cases} u^i & u^i\notin V(M), \\ u^{iM} & u^i\in V(M). \end{cases}
\end{equation*}
We will construct a cycle~$C$ that extends~$N$ and avoids~$z$ and~$u$ if~$u \in V(M)$ as well as~$u^i$ if $u^i \in V(M)$.
Written compactly, the cycle~$C$ avoids~$z$ and~$\{u,u^i\}\setminus\{v,w\}$.
From~$C$, the desired cycle that extends~$M$ and avoids~$z$ can be obtained by straightforward modifications.
Specifically, we remove the edge~$vw$ from~$C$, add the edge~$uu^i$ and the edge $uu^M$ if $u\in V(M)$ as well as $u^iu^{iM}$ if $u^i\in V(M)$.

The construction of~$C$ proceeds as follows:
We carefully choose a perfect matching~$P_0$ on~$K(B_0)$, where $B_0:=V(Q^i_0)\cap V(N^i_-)$ \fbox{2}.
Having chosen~$u$ and~$P_0$, we apply Theorem~\ref{thm:forbidden} inductively to~$K(Q^i_0)$ and obtain that $N^i_0 \cup P_0$ can be extended to a cycle~$C_0$ that avoids~$z$.
If $u \in V(M)$, then we argue that $C_0$ also avoids~$u$ \fbox{3}.
Let $P_1$ be the shortcut edges of the linear forest~$(C_0\setminus P_0)\cup N^i_-$.
If $u^i \notin V(M)$, then we apply Theorem~\ref{thm:cycle} and obtain that $N^i_1 \cup P_1$ can be extended to a cycle~$C_1$.
Otherwise, we apply Theorem~\ref{thm:forbidden} and obtain that $N^i_1\cup P_1$ can be extended to a cycle~$C_1$ that avoids~$u^i$.
As mentioned before, we need to argue that the assumptions of those theorems are met in the subcubes~$Q^i_0$ and~$Q^i_1$ for the matchings $N^i_0\cup P_0$ and~$N^i_1\cup P_1$, and this is where our choices of~$u$ and~$P_0$ become crucial; these verifications are done below under the labels~\fbox{4} and~\fbox{5} separately for the 0- and 1-subcube, respectively.
Observe that $C:=(C_0 \setminus P_0) \cup N^i_- \cup (C_1 \setminus P_1)$ is a single cycle that extends~$N$ and avoids~$z$ and~$u$ if~$u\in V(M)$ as well as~$u^i$ if~$u^i\in V(M)$.

After giving this outline, we now provide the missing details for the points~\fbox{1}--\fbox{5}.
Each of the two steps~\fbox{1} (choosing~$u$) and~\fbox{2} (choosing~$P_0$) splits into two cases.
The argument for~\fbox{3} ($C_0$ avoids $u$) is straightforward and is presented after step~\fbox{1}, as it requires the definition of~$u$.
The verifications~\fbox{4} and~\fbox{5} are done after step~\fbox{1} and again after step~\fbox{2} via some auxiliary claims (in each of the two respective cases), as these two steps are sequential and depend on each other.

\vspace{1ex}
\fbox{1} \textbf{Choosing~$u$.}

Let $X_0:=\{x\in V(Q^i_0)\mid x\neq z\text{ and } x\notin V(M)\}$, i.e., these are vertices in~$V(Q^i_0)$ different from~$z$ that are avoided by~$M$.
Furthermore, let $X_1:=\{x\in V(Q^i_1)\mid x\notin V(M)\}$, i.e., these are vertices in~$V(Q^i_1)$ avoided by~$M$.
As $|M^i_-|$ is odd, we have that $|X_0|$ is even and $|X_1|$ is odd (in particular, $X_1$ is nonempty).

\vspace{0.5ex}
\textbf{Case~a:} $X_0\neq \emptyset$.
We partition~$X_0$ into the sets $X_{01}:=\{x\in X_0\mid x^{iM}\in V(Q^i_1)\}$ and $X_{00}:=\{x\in X_0\mid x^{iM}\in V(Q^i_0)\}$ and we distinguish two cases.

\textit{Case~ai:} If $X_{01}\neq \emptyset$, then we define $u:=x$ for some $x\in X_{01}$.
Note that $N^i_0=M^i_0$, and thus we obtain from~(i) that $N^i_0$ contains no (covered near) half-layers of~$Q^i_0$.

\textit{Case~aii:} If $X_{01}=\emptyset$, then we have $X_0=X_{00}$.
As this set has even cardinality and is nonempty, we have $|X_{00}|\ge 2$, i.e., there are at least two distinct vertices $x,y\in X_0$ with $x^{iM},y^{iM}\in V(Q^i_0)$.
From~(i) we know that $M^i_0$ contains no (covered near) half-layers of~$Q^i_0$.
We argue that (at least) one of $M^i_0\cup\{xx^{iM}\}$ or $M^i_0\cup\{yy^{iM}\}$ contains no (covered near) half-layers of~$Q^i_0$, which implies that we can define $u:=x$ or $u:=y$, respectively.
Suppose for the sake of contradiction that this is false, i.e., both $M^i_0\cup\{xx^{iM}\}$ and $M^i_0\cup\{yy^{iM}\}$ contain a (covered near) half-layer of~$Q^i_0$.
As $M^i_0\cup\{xx^{iM}\}$ contains a (covered near) half-layer, we obtain that $M^i_0$ contains a 2-near half-layer~$L_x$ of~$Q^i_0$, where $x$ is one of the extension vertices and the remaining three extension vertices are covered by~$M$; see Figure~\ref{fig:proof1}.
Symmetrically, as $M^i_0\cup\{yy^{iM}\}$ contains a (covered near) half-layer, we obtain that $M^i_0$ contains a 2-near half-layer~$L_y$ of~$Q^i_0$, where $y$ is one of the extension vertices and the remaining three extension vertices are covered by~$M$.
As all extension vertices of~$L_x$ and~$L_y$ except~$x$ or~$y$, respectively, are covered by~$M$, we obtain that~$L_x\cap L_y=\emptyset$.
On the other hand, by Lemma~\ref{lem:hlayers-number}~(iii), $L_x$ and~$L_y$ have the same direction.
This implies that there is a direction~$j\in[d]\setminus\{i\}$ such that $|M^j_-|\ge |L_x|+|L_y|=2\cdot (2^{d-3}-2)\ge 12$ and $|M^i_-|\le 2\cdot 4=8$.
In fact, as $z,x,y\notin V(M)$ the latter bound can be improved to~$|M^i_-|\leq 5$.
This however contradicts our rules~(1)--(3) for choosing the direction~$i$ (see property~(iii)).

\begin{figure}[h!]
\centerline{
\includegraphics[page=2]{layer}
}
\caption{Illustration of Case~aii in the proof of Theorem~\ref{thm:forbidden}.}
\label{fig:proof1}
\end{figure}

\fbox{4}
{\it Claim 4a: $N^i_0$ contains no (covered near) half-layers of~$Q^i_0$.}
This was argued in the two subcases before.
 
\fbox{5} 
{\it Claim 5a: If $N^i_1$ contains a half-layer of~$Q^i_1$, then there is a vertex on $u^i$'s side of the half-layer that is avoided by~$M$.}
From this claim it follows in particular that $N^i_1$ and~$u^i$ satisfy property~\propH{} in Theorem~\ref{thm:forbidden} for~$Q^i_1$.
To prove the claim, note that we have $N^i_1=M^i_1\setminus\{u^iu^{iM}\}$ (in case~ai) or $N^i_1=M^i_1$ (in case~aii), and therefore $N^i_1$ can only contain a half-layer of~$Q^i_1$ if $M^i_1$ contains a half-layer of~$Q^i_1$.
Let $L$ be such a half-layer of~$Q^i_1$ contained in~$M^i_1$, and let $j\in[d]\setminus\{i\}$ be the direction of~$L$.
By~(ii), this means that $M^i_-$ contains a (covered near) quad-layer of~$Q_d$; see Figure~\ref{fig:proof2}.
Lemma~\ref{lem:hlayers-number}~(i) yields that $M^i_1$ contains no half-layers of~$Q^i_1$ in a direction different from~$j$, and thus $L$ is the only half-layer of~$Q^i_1$ in~$M^i_1$.
If $u^i\in V(L)$, then $N^i_1=M^i_1\setminus\{u^iu^{iM}\}$ contains no half-layers, and there is nothing to show.
If $u^i\notin V(L)$, then the vertex~$u^i$ together with all vertices in~$X_1$ lie in either~$(Q^i_1)^j_0$ or~$(Q^i_1)^j_1$.
As $X_1$ is nonempty, this proves the claim.

\begin{figure}
\centerline{
\includegraphics[page=3]{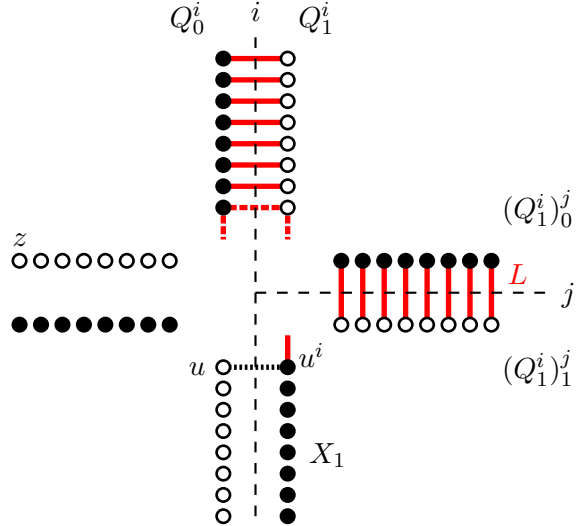}
}
\caption{Illustration of the situation when $M^i_1$ contains a half-layer~$L$ of~$Q^i_1$.}
\label{fig:proof2}
\end{figure}

\vspace{0.5ex}
\textbf{Case~b:} $X_0=\emptyset$.
We define $Y:=\{x\in V(Q^i_0)\mid x \text{ is odd and } xx^i\notin M\}$, i.e., these are odd vertices in~$V(Q^i_0)$ for which the incident edge in direction~$i$ is not in~$M$.
Clearly, we have $z\notin Y$ as $z=\emptyset$ is even.
Furthermore, as $X_0=\emptyset$ and $M$ is assumed to satisfy property~\propH{}, we know that~$Y\neq \emptyset$.

We select~$u\in Y$ according to the following rules applied in this order:
\begin{enumerate}[label=(\arabic*'),leftmargin=8mm, noitemsep, topsep=1pt plus 1pt]
\item If $M^i_1$ contains a near half-layer~$L'$ of~$Q^i_1$, then we select $u\in Y$ as a vertex such that $u^i$ is an (even) end vertex of one of the edges of~$L'$;
\item otherwise, if $M^i_0$ contains a 2-near half-layer~$L''$ of~$Q^i_0$, then we select $u\in Y$ as one of the end vertices of~$L''$;
\item otherwise, we choose $u\in Y$ arbitrarily.
\end{enumerate}

\fbox{4}
{\it Claim 4b: $N^i_0$ contains no (covered near) half-layers of~$Q^i_0$.}
Note that if $N^i_0=M^i_0$ or $N^i_0=M^i_0\setminus\{uu^M\}$, then the claim follows directly from~(i).
It remains to consider the case $N^i_0=(M^i_0\setminus\{uu^M\})\cup\{u^Mu^{iM}\}$, which occurs if and only if $u^M,u^{iM}\in V(Q^i_0)$.
In particular, rule~(1') does not apply, as it would imply~$u^{iM}\in V(Q^i_1)$.
Using~(i) again, it can be ruled out that~$N^i_0$ contains a half-layer of~$Q^i_0$, and it remains to consider covered near half-layers.
If $N^i_0$ contains a covered near half-layer~$L'$, then by~(i) $u^Mu^{iM}$ is one of its edges.
Therefore $M^i_0$ contains the 2-near half-layer~$L'':=L'\setminus\{u^Mu^{iM}\}$; see Figure~\ref{fig:proof3}~(a).
Let $j\in[d]\setminus\{i\}$ be the direction of~$L''$, which is also the direction of the edge~$u^Mu^{iM}$.
By Lemma~\ref{lem:hlayers-number}~(iii), $M^i_0$ contains no 2-near half-layers of~$Q^i_0$ in a direction different from~$j$, so only $L''$ and possibly a second 2-near half-layer in direction~$j$ qualify for application of rule~(2').
However, the edge $uu^M$ removed by rule~(2') also has the direction~$j$.
This is a contradiction, however, as the edges~$uu^M$ and~$u^Mu^{iM}$ have different directions (as $u\neq u^{iM}$), proving the claim.

\begin{figure}[h!]
\centerline{
\includegraphics[page=4]{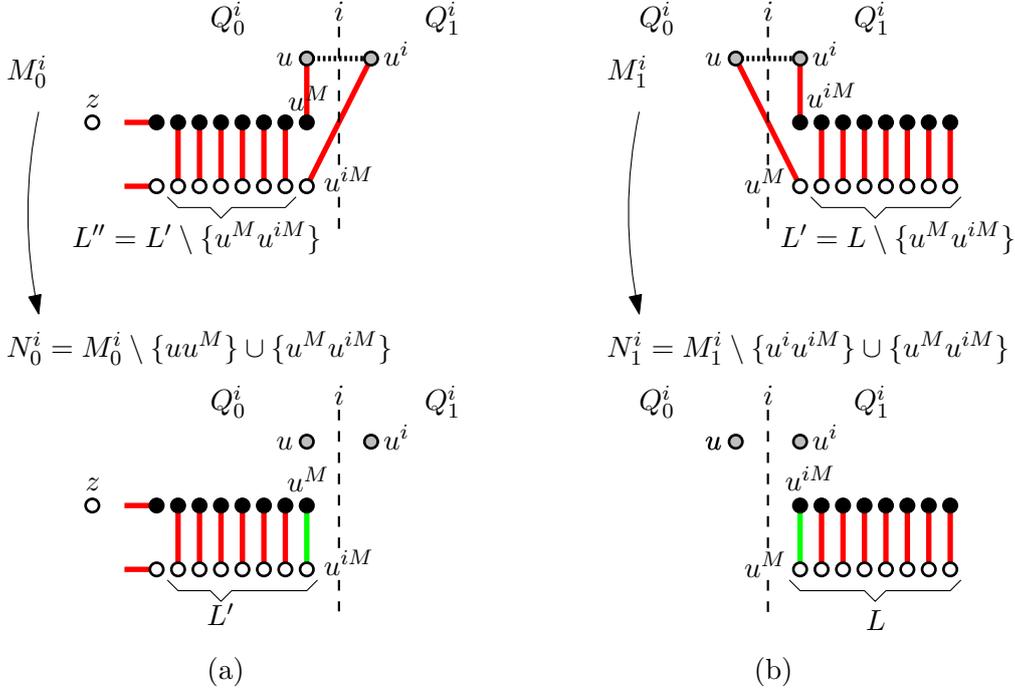}
}
\caption{Illustration of Case~b in the definition of~$u$ in the proof of Theorem~\ref{thm:forbidden}.}
\label{fig:proof3}
\end{figure}

\fbox{5}
{\it Claim 5b: If $u^i\in V(M)$, then $N^i_1$ contains no half-layers of~$Q^i_1$.}
It follows in particular that $N^i_1$ and~$u^i$ satisfy property~\propH{} in Theorem~\ref{thm:forbidden} for~$Q^i_1$ (recall from the proof outline that this property need not be checked if $u^i\notin V(M)$).
To prove the claim, note that if $M^i_1$ contains a half-layer~$L$ of~$Q^i_1$, then by~(ii) and Lemma~\ref{lem:hlayers-number}~(i), $L$ is the only half-layer in~$M^i_1$ and all near half-layers must be contained in~$L$.
By rule~(1'), the edge~$u^iu^{iM}$ is an edge from~$L$, so in both of the possible cases $N^i_1=M^i_1\setminus\{u^iu^{iM}\}$ or $N^i_1=(M^i_1\setminus\{u^iu^{iM}\})\cup\{u^Mu^{iM}\}$ the resulting set~$N^i_1$ contains no half-layers of~$Q^i_1$, and we are done.
On the other hand, if $M^i_1$ contains no half-layers, then $N^i_1$ may only contain a half-layer~$L$ if $N^i_1=(M^i_1\setminus\{u^iu^{iM}\})\cup\{u^Mu^{iM}\}$, which occurs if and only if $u^M,u^{iM}\in V(Q^i_1)$, and then $L':=L\setminus\{u^Mu^{iM}\}$ is a near half-layer in~$M^i_1$; see Figure~\ref{fig:proof3}~(b).
Let $j\in[d]\setminus\{i\}$ be the direction of~$L'$, which is also the direction of the edge~$u^Mu^{iM}$.
By Lemma~\ref{lem:hlayers-number}~(i), $M^i_1$ contains no near half-layers of~$Q^i_1$ in a direction different from~$j$, so only $L'$ and possibly a second near half-layer in direction~$j$ qualify for application of rule~(1').
However, the edge $u^iu^{iM}$ removed by rule~(1') also has the direction~$j$.
This is a contradiction, however, as the edges~$u^iu^{iM}$ and~$u^Mu^{iM}$ have different directions (as $u^i\neq u^M$), proving the claim.

\vspace{1ex}
\fbox{3} {\bf $C_0$ avoids $u$.}
Note that the cycle~$C_0$ obtained from applying Theorem~\ref{thm:forbidden} to extend~$N^i_0\cup P_0$ avoids the vertex~$z$ by construction.
As $N^i_0\cup P_0$ avoids only one other vertex in~$Q^i_0$ apart from~$z$, namely the vertex~$u$ (recall that $X_0=\emptyset$), whose parity is opposite to that of~$z$,  Lemma~\ref{lem:avoid2} implies that~$C_0$ must also avoid the vertex~$u$.

\vspace{1ex}
\fbox{2} \textbf{Choosing~$P_0$.}

From~(iii) and the fact that~$|M^i_-|$ is odd we obtain that~$|M^i_-|\ge 5$, and consequently~$|N^i_-|$ is even and $|N^i_-|\ge 4$.
This assumption is crucial when we will apply Lemma~\ref{lem:avoid-layer} to~$Q^i_0$.
We distinguish the cases~$u^i\notin V(M)$ and~$u^i\in V(M)$.

\vspace{0.5ex}
\textbf{Case~a:} $u^i\notin V(M)$.

\fbox{4}+\fbox{5} We argued before (Claims~4a and 4b) that $N^i_0$ contains no (covered near) half-layers of~$Q^i_0$.
We apply Lemma~\ref{lem:avoid-layer} to obtain a perfect matching~$P_0$ on~$K(B_0)$ so that~$N^i_0\cup P_0$ contains no half-layers of~$Q^i_0$.
It follows that Theorem~\ref{thm:forbidden} applies inductively to~$K(Q^i_0)$ to obtain a cycle~$C_0$ that extends~$N^i_0\cup P_0$ and avoids~$z$.
Furthermore, Theorem~\ref{thm:cycle} applies inductively to~$K(Q^i_1)$, which does not require any assumptions on~$N^i_1\cup P_1$, to obtain a cycle~$C_1$ that extends~$N^i_1\cup P_1$.

\vspace{0.5ex}
\textbf{Case~b:} $u^i\in V(M)$.

Let $I$ be the set of all directions $j\in [d] \setminus\{i\}$ for which there exists a perfect matching~$P'$ of~$K(B_1)$, where $B_1:=V(Q^i_1)\cap V(N^i_-)$ such that~$N^i_1 \cup P'$ and~$u^i$ violate property~\propH{}, i.e., $N^i_1\cup P'$ contains a half-layer in direction~$j$ and all vertices on $u^i$'s side of the half-layer other than~$u^i$ itself are covered.
If $j\in I$, then as $u^i$ is not covered by~$N$, there is exactly one half-layer~$L$ of~$Q^i_1$ in direction~$j$ for which this condition holds.
Furthermore, at least one edge of~$L$ must come from the matching~$P'$ and not from~$N^i_1$.
This follows from our earlier argument (Claims~5a and 5b) that if $N^i_1$ contains a half-layer of~$Q^i_1$, then there is a vertex on~$u^i$'s side of the half-layer that is avoided by~$M$ (because this forces $I=\emptyset$).

We distinguish three cases depending on the size of~$I$.

\textit{Case~bi:} $|I|=0$.

\fbox{4}+\fbox{5} We argued before (Claims~4a and 4b) that $N^i_0$ contains no (covered near) half-layers of~$Q^i_0$.
We apply Lemma~\ref{lem:avoid-layer} to obtain a perfect matching~$P_0$ on~$K(B_0)$ so that~$N^i_0\cup P_0$ contains no half-layers of~$Q^i_0$.
It follows that Theorem~\ref{thm:forbidden} applies inductively to~$K(Q^i_0)$ to obtain a cycle~$C_0$ that extends~$N^i_0\cup P_0$ and avoids~$z$.
The definition of~$I$ ensures that Theorem~\ref{thm:forbidden} applies inductively to~$K(Q^i_1)$ (regardless of~$C_0$) to obtain a cycle~$C_1$ that extends~$N^i_1\cup P_1$ and avoids~$u^i$.

\textit{Case~bii:} $|I|=1$.

We distinguish the subcases~$|N^i_-|\ge 6$ and~$|N^i_-|=4$.

\textit{Case~bii':} $|N^i_-|\ge 6$.

Let $xx^j\notin N^i_1$ be an edge of the unique half-layer of~$Q^i_1$ in direction~$j$ with~$I=\{j\}$ which is contained in~$N^i_1\cup P'$ for some perfect matching~$P'$ of~$K(B_1)$.
The key idea that will prevent~$xx^j$ to appear as a shortcut edge of the linear forest~$(C_0\setminus P_0)\cup N^i_-$ (regardless of~$C_0$) is to include the edge~$x^Nx^{jN}$ in~$P_0$.
This ensures that~$xx^j\notin P_1$ and therefore~$N^i_1\cup P_1$ contains no half-layers of~$Q^i_1$.
This idea was first presented in~\cite{MR3830138} in the proof of their Theorem~3.

\fbox{4}+\fbox{5} We argued before (Claims~4a and 4b) that~$N^i_0$ contains no (covered near) half-layers of~$Q^i_0$.
Consequently, $N^i_0\cup\{x^Nx^{jN}\}$ contains no half-layers of~$Q^i_0$.
Thus, we apply Lemma~\ref{lem:avoid-layer} to $N^i_0\cup\{x^Nx^{jN}\}$ to obtain a perfect matching~$P_0$ of~$K(B_0)$ that includes the edge~$x^Nx^{jN}$ such that $N^i_0\cup P_0$ contains no half-layers of~$Q^i_0$.
It follows that Theorem~\ref{thm:forbidden} applies inductively to~$K(Q^i_0)$ to obtain a cycle~$C_0$ that extends~$N^i_0\cup P_0$ and avoids~$z$.
As $x^Nx^{jN}\in P_0$, we have $xx^j\notin P_1$ (regardless of~$C_0$), and so Theorem~\ref{thm:forbidden} applies inductively to~$K(Q^i_1)$ to obtain a cycle~$C_1$ that extends~$N^i_1\cup P_1$ and avoids~$u^i$.

\textit{Case~bii'':} $|N^i_-|=4$.
In this case we have~$|M^i_-|=5$.
From~(iii) we obtain that rule~(3) was applied to choose the direction~$i$, and therefore~$|M^j_-|\le |M^i_-|=5$ for all $j\in[d]\setminus\{i\}$.
As $P_1$ has only two edges and a half-layer of~$Q^i_1$ has~$2^{d-3}\ge 8$ edges, we observe that a half-layer of~$Q^i_1$ can be present in~$N^i_1\cup P_1$ only if $d=6$ and $N^i_1$ contains a 2-near half-layer~$L''$ in direction~$j$ with~$I=\{j\}$ which has 6~edges, one of them being the edge~$u^Mu^{iM}$, and the four extension vertices of~$L''$ are the end vertices of the four edges in~$N^i_-$; see Figure~\ref{fig:proof4}.
Furthermore, in this case $N^i_0$ contains no 2-near half-layers of~$Q^i_0$ (as this would require 6 edges of~$Q^i_0$ in the same direction).
We let $xx^j,yy^j\notin N^i_1$ be the two edges such that~$L''\cup\{xx^j,yy^j\}$ is a half-layer of~$Q^i_1$, and we define $P_0:=\{x^Nx^{jN},y^Ny^{jN}\}$.

\fbox{4}+\fbox{5} As $N^i_0$ contains no 2-near half-layers of~$Q^i_0$, we have that $N^i_0\cup P_0$ contains no half-layers of~$Q^i_0$, so Theorem~\ref{thm:forbidden} applies inductively to~$K(Q^i_0)$ to obtain a cycle~$C_0$ that extends~$N^i_0\cup P_0$ and avoids~$z$.
As $x^Nx^{jN}\in P_0$, we have $xx^j\notin P_1$ (regardless of~$C_0$), so Theorem~\ref{thm:forbidden} applies inductively to~$K(Q^i_1)$ to obtain a cycle~$C_1$ that extends~$N^i_1\cup P_1$ and avoids~$u^i$.

\begin{figure}[h!]
\centerline{
\includegraphics[page=5]{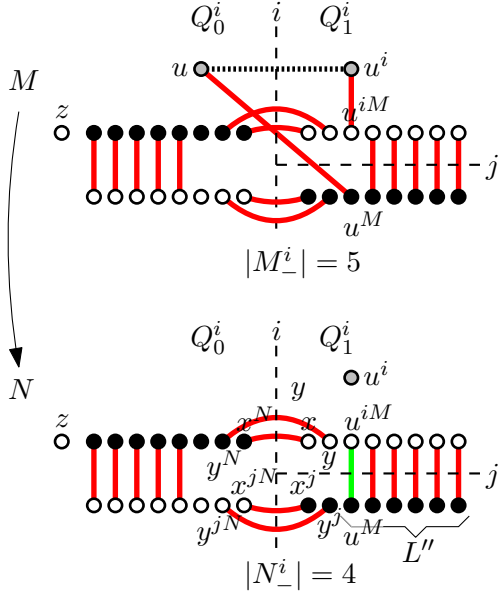}
}
\caption{Illustration of Case~bii'' in the proof of Theorem~\ref{thm:forbidden}.}
\label{fig:proof4}
\end{figure}

\textit{Case~biii:} $|I|\ge 2$.

For any~$j\in I$, let $L_j$ be the half-layer of~$Q^i_1$ in direction~$j$ that is contained in~$N^i_1\cup P'$ for some perfect matching~$P'$ on~$K(B_1)$ (recall that $L_j$ is unique as~$u^i\notin L_j$).
For any two half-layers~$L_j,L_k$, $j,k\in I$, $j\neq k$, Lemma~\ref{lem:hlayers-structure}~(i) yields that the union~$L_j\cup L_k$ is a collection of paths of length~2.
Consequently, if an edge from~$L_j$ did belong to~$N^i_1$, then its neighboring edge from~$L_k$ would not belong to~$N^i_1$, but at the same time their common end vertex would not be in~$B_1$, contradicting the fact that~$k\in I$.
It follows that~$V(L_j)\subseteq B_1$ for all $j\in I$, in other words, none of the edges in any of the half-layers~$L_j$ belongs to~$N^i_1$.
By Lemma~\ref{lem:hlayers-structure}~(ii), the union~$\bigcup_{j\in I} L_j$ avoids exactly~$2^{d-1-|I|}$ vertices of~$Q^i_1$ and therefore $|N^i_-|\ge 2^{d-1}-2^{d-1-|I|}$, which implies $|N^i_0|\le (2^{d-1}-|N^i_-|)/2\le 2^{d-2-|I|}$.
For every `dangerous' direction~$j\in I$ we will pick an edge~$xx^j\in L_j$, and we add the edge~$x^Nx^{jN}$ (which lies in~$Q^i_0$ by the arguments from before) to the matching~$P_0$, which ensures that $xx^j\notin P_1$.
Specifically, we define~$v:=u^i$ and we consider the edge~$e(j):=v^{p(j)}(v^{p(j)})^j\in L_j$ for every $j\in I$, where $p(j):=j-1\pmod d$.
Note that the edges~$e(j)$, $j\in I$, are independent, i.e., no two of them share an end vertex, so for every~$j\in I$ we let $xx^j:=e(j)$, and we add the edge~$x^N x^{jN}$ to~$P_0$.
As $|N^i_0|+|I|\le 2^{d-2-|I|}+|I|<2^{d-3}$, this does not create half-layers in~$N^i_0\cup P_0$.
The matching~$P_0$ on~$K(B_0)$ is completed by applying Lemma~\ref{lem:avoid-layer}, using that $|N^i_-|-2|I|\ge 2^{d-1}-2^{d-1-|I|}-2|I|\ge 4$.

\fbox{4}+\fbox{5} The argument then continues analogously as in case~bii' before.

This completes the proof.
\end{proof}

\subsection{Reverse implication (induction basis $d=5$)} \label{sec:basis}

It remains to settle the base case~$d=5$ in Theorem~\ref{thm:forbidden}, which we do with computer assistance.
In this section we describe the underlying theoretical considerations.
Our verification program in C++ is available for download from Gitlab~\cite{gitlab}, and it spans approximately 1400 lines of code.
The program is compiled using GCC~12.2.0 on Debian 12, and it runs as a single thread.
The reported running times are obtained on an AMD Ryzen~9 7900X3D, 4.4~GHz, 64~GB RAM desktop computer.

Note that $K(Q_5)$ has $31\cdot 29\cdot 27\cdots 3>10^{17}$ many perfect matchings, considerably more than what could be handled by a naive exhaustive enumeration approach.
This explains the following theoretical considerations necessary for being able to perform these verifications in reasonable computing time.

Let~$M$ be a matching of~$K(Q_5)$ that avoids~$z$ and satisfies property~\propH{}.
We assume w.l.o.g.\ that $M$ is \propH{}-maximal.
We distinguish two cases, namely if $M$ contains a $z$-dangerous (near) half-layer or not.

\subsubsection{$M$ contains a $z$-dangerous (near) half-layer}

If $M$ contains a $z$-dangerous (near) half-layer, then the computer verification proceeds as follows:
We generate all matchings of~$K(Q_5)$ that contain a $z$-dangerous near half-layer in some direction~$i$ (w.l.o.g.\ we can assume $i=1$), and for each of them that satisfies property~\propH{}, we verify that it can be extended to a Hamilton cycle that avoids~$z$.
We reduce the number of test cases by considering the group of automorphisms generated by permutations of all directions except~$i$, and we only consider matchings that are non-isomorphic under this group action.
The running time for completing the verifications in this case is about 3~minutes.

\subsubsection{$M$ does not contain $z$-dangerous (near) half-layers}
\label{sec:no-nhlayer}

We now assume that $M$ does not contain $z$-dangerous (near) half-layers.
By Lemma~\ref{lem:H-violation}, $M$ is \defi{maximal} in the sense that it covers at least one end vertex of every edge of~$Q_5\setminus z$.
This is helpful as testing maximality of a matching is easier than testing property~\propH{}.

We select a direction~$i\in[5]$ according to the following rules applied in this order:
\begin{enumerate}[leftmargin=8mm, noitemsep, topsep=1pt plus 1pt]
\item If $M$ contains a $z$-dangerous (near) quad-layer, then we choose~$i$ according to Lemma~\ref{lem:Q5-quad-cut} (note that Lemma~\ref{lem:hlayers-number}~(ii) does \emph{not} apply for $d=5$);
\item otherwise, we choose a direction $i \in [5]$ that maximizes the quantity~$|M^i_-|$.
\end{enumerate}
By Lemmas~\ref{lem:Q5-quad-cut} and~\ref{lem:H-maximal}, these rules guarantee that $M^i_0$ contains no (near) half-layers of~$Q^i_0$ and that $|M^i_-|\ge 3$.
We now distinguish two cases depending on the parity of~$|M^i_-|$.

\textbf{Case~1:} $|M^i_-|\geq 4$ is even.

Our program constructs the following sets of matchings:
\begin{itemize}[leftmargin=6mm, noitemsep, topsep=1pt plus 1pt]
\item $\cM$: The set of maximal matchings of~$K(Q^i_0\setminus z)$ that do not contain (near) half-layers of~$Q^i_0$.
\item $\cM'$: The matchings in~$\cM$ that cannot be extended to a cycle of~$K(Q^i_0\setminus z)$.
\item $\cM_2$: The set of pairs~$(M,A)$ where $M$ is obtained from some matching in~$\cM'$ by removing two of its edges and $A$ are the four end vertices of the removed edges.
\item $\cM_2'$: The subset of pairs $(M,A)$ from~$\cM_2$ such that $M$ cannot be extended to a linear forest of $K(Q^i_0\setminus z)$ with terminals exactly in~$A$.
\end{itemize}

Our program reports that the set~$\cM_2'$ is empty, and we now argue that this implies that $M$ can be extended to a cycle in~$K(Q_5)$ that avoids~$z$.

Indeed, we define $A_0:=V(Q^i_0)\cap V(M^i_-)$ and apply Lemma~\ref{lem:avoid-layer} to obtain a perfect matching~$P_0$ on~$K(A_0)$ such that $N:=M^i_0 \cup P_0$ contains no (near) half-layers of~$Q^i_0$.
As $M$ and $N$ have the same set of uncovered vertices in~$Q^i_0$, $N$ is a maximal matching of~$K(Q^i_0\setminus z)$, so $N \in \cM$.
If $N\notin \cM'$, then $N$ can be extended to a cycle of~$K(Q^i_0\setminus z)$, and we can apply Theorem~\ref{thm:cycle} to~$Q^i_1$ and combine the two cycles to a cycle of~$K(Q_5)$ that extends~$M$ and avoids~$z$.

It remains to argue about the case $N\in \cM'$.
If $|M^i_-|=4$, then we have $(M^i_0,A_0) \in \cM_2$.
As $\cM_2'=\emptyset$, $M^i_0$ can be extended to a linear forest of~$K(Q^i_0\setminus z)$ with terminals exactly in~$A_0$, and we can apply Theorem~\ref{thm:cycle} to~$Q^i_1$ as before.
If $|M^i_-| \ge 6$, then let $P_0'$ be the matching obtained from~$P_0$ by removing two of its edges.
From the definition, we have $(M^i_0 \cup P_0', A_0 \setminus V(P_0')) \in \cM_2$, and so the argument continues similarly to before.

The computations for this case take about 1~second.
This is achieved again by removing isomorphic matchings early on during the computation.

\textbf{Case~2:} $|M^i_-|\geq 3$ is odd.

\begin{figure}[h!]
\centerline{
\includegraphics[page=7]{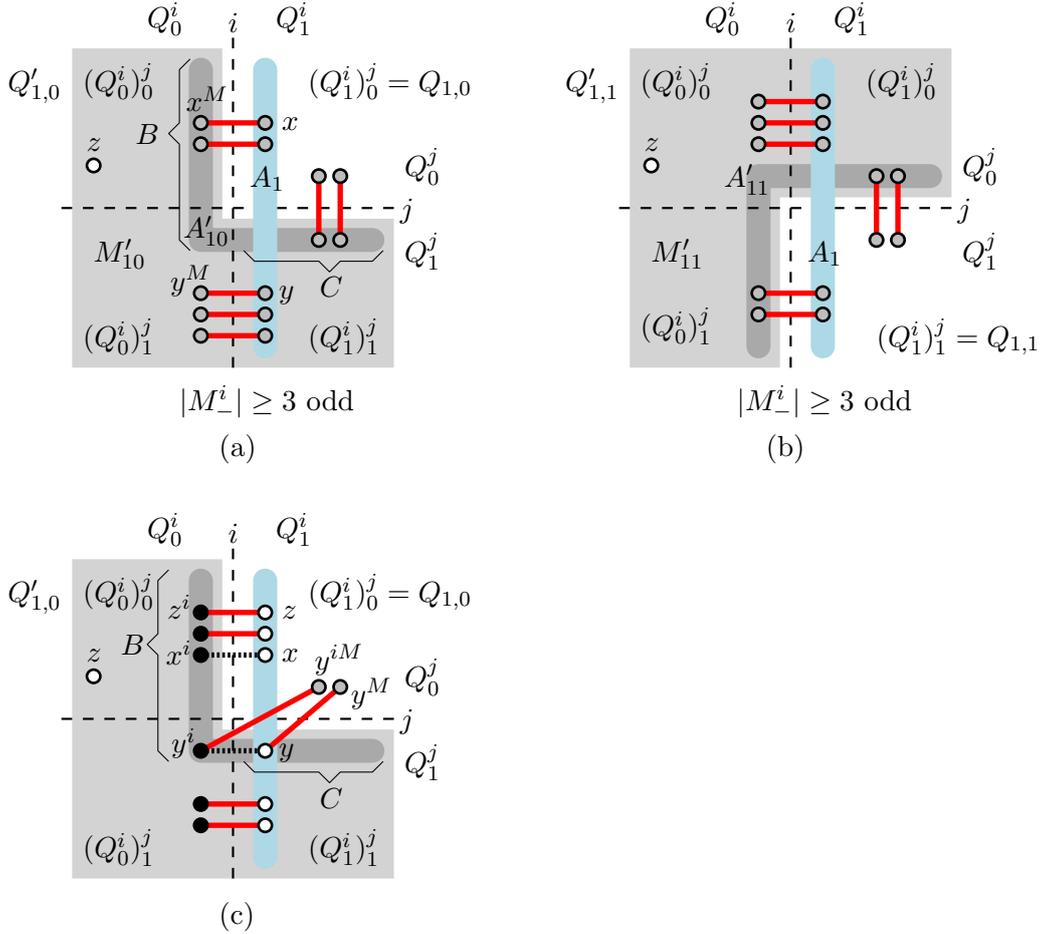}
}
\caption{Illustration of definitions used in Case~2 of Section~\ref{sec:no-nhlayer}.}
\label{fig:proof5}
\end{figure}

The following arguments are illustrated in Figure~\ref{fig:proof5}.
We define $A_1:=V(Q^i_1)\cap V(M^i_-)$.
For some direction~$j\in[5]$, $j\neq i$, we define $Q_{1,0}:=(Q^i_1)^j_0$ and $Q_{1,1}:=(Q^i_1)^j_1$.
We also define $Q_{1,0}':=Q_5\setminus Q_{1,0}$ and $Q_{1,1}':=Q_5\setminus Q_{1,1}$.
Let $M_{1,0}:=(M^i_1)^j_0$ and~$M_{1,1}:=(M^i_1)^j_1$ be the set of edges of~$M$ having both end vertices in~$Q_{1,0}$ and~$Q_{1,1}$, respectively.
Similarly, let $M_{1,0}'$ and $M_{1,1}'$ be the set of edges of~$M$ having both end vertices in~$Q_{1,0}'$ and~$Q_{1,1}'$, respectively.
Furthermore, let $M_{1,0,-}$ and $M_{1,1,-}$ be the edges of $M$ having exactly one end vertex in $Q_{1,0}$ and $Q_{1,1}$, respectively, and let $A_{1,0}'$ and~$A_{1,1}'$ be the vertices of~$Q_{1,0}'$ and~$Q_{1,1}'$ covered by edges of~$M_{1,0,-}$ and $M_{1,1,-}$.
We write~$L^j_{0i}$ for the half-layer in~$Q^j_0$ in direction~$i$ for which the end vertices whose $i$th bit equals~0 are odd.
This is precisely the $z$-dangerous half-layer of~$Q^j_0$ in direction~$i$.
Similarly, we write~$L^j_{1i}$ for the half-layer in~$Q^j_1$ in direction~$i$ for which the end vertices whose $i$th bit equals~0 are odd.
We define~$\cL:=\{L^j_{0i},L^j_{1i}\mid j\in[d]\setminus\{i\}\}$.

We aim to choose the direction~$j\in [5]$, $j\neq i$, so that one of the following two conditions is satisfied:
\begin{enumerate}[leftmargin=6mm, noitemsep, topsep=1pt plus 1pt,label=(\alph*)]
\item $|A_{1,0}'|\geq 2$ is even and there exists a perfect matching~$P_{1,0}'$ on~$K(A_{1,0}')$ such that $M_{1,0}' \cup P_{1,0}'$ does not contain half-layers of~$Q^i_0$ nor the half-layer~$L^j_{1i}$ of~$Q^j_1$ (Figure~\ref{fig:proof5}~(a)), or
\item $|A_{1,1}'|\geq 2$ is even and there exists a perfect matching~$P_{1,1}'$ on~$K(A_{1,1}')$ such that $M_{1,1}' \cup P_{1,1}'$ does not contain half-layers of~$Q^i_0$ nor the half-layer~$L^j_{0i}$ of~$Q^j_0$ (Figure~\ref{fig:proof5}~(b)).
\end{enumerate}

Note that if a matching of~$K(Q_{1,0}')$ covers all vertices of~$Q^i_0$ except~$z$ and contains the half-layer~$L^j_{1i}$ of~$Q^j_1$, then it cannot be extended to a cycle.
Similarly, if a matching of~$K(Q_{1,1}')$ covers all vertices of~$Q^i_0$ except~$z$ and contains the half-layer~$L^j_{0i}$ of~$Q^j_0$, then it cannot be extended to a cycle.
This observation follows along the same lines as the proof of the necessity of property~\propH{} given in Section~\ref{sec:forward}.
We thus achieve a considerable reduction in the size of the set~$\cM'$ defined below compared to what it would be if the conditions involving~$L^j_{1i}$ and~$L^j_{0i}$ would not be present.

Specifically, for choosing~$j$ we distinguish two cases.

We first consider the case that~$M^i_-$ contains no quad-layers of~$Q_5$ from the set~$\cL$.
We choose two vertices $x,y \in A_1$ and if possible choose them so that $xx^i, yy^i \in M$.
We then choose the direction~$j$ so that it separates~$x$ and~$y$, i.e., one is contained in~$Q^j_0$ and the other one in~$Q^j_1$, implying that $|A_{1,0}'|>0$ and~$|A_{1,1}'|>0$.
Since $|A_1|=|M^i_-|$ is odd, $|A_{1,0}'|$ and $|A_{1,1}'|$ have different parity, and so one of them is even and at least~2.
By symmetry, we assume w.l.o.g.\ that $|A_{1,0}'|\geq 2$ is even and that~$x\in Q_{1,0}$; see Figure~\ref{fig:proof5}~(a).
We define~$B:=A_{1,0}'\cap V(Q^i_0)$ and~$C:=A_{1,0}'\setminus B$.
Note that we have~$x^M\in B$.
The matching~$P_{1,0}'$ on~$K(A_{1,0}')$ is obtained as follows.
If $|B|\geq 4$, then we apply Lemma~\ref{lem:avoid-layer} to match all vertices in~$B$, except the vertex~$x^M$ if $B$ is odd, such that no near half-layers of~$Q^i_0$ are created.
If $|B|\in\{2,3\}$, then we match one pair of vertices from~$B$, except the vertex~$x^M$ if $|B|=3$, which does not create half-layers of~$Q^i_0$.
If $|B|=|\{x^M\}|=1$ or the vertex~$x^M$ from~$B$ is still unmatched from the previous cases, we match it arbitrarily to one of the vertices in~$C$, and the remaining vertices in~$C$ we match arbitrarily among each other.
Note that adding an edge from~$x^M\in B$ to some vertex~$c\in C$ cannot create the half-layer~$L^j_{1i}$, as this would imply~$(x^M)^i\in C$ and therefore $(x^M)^i\neq x$ and $xx^i\notin M$, but since 3 edges of~$L^j_{1i}$ are in~$M^i_-$, this contradicts the choice of~$x$.

It remains to consider the case that~$M^i_-$ contains a quad-layer~$L\in\cL$ of~$Q_5$, which has $2^{d-3}=4$~edges.
As $M$ does not contain $z$-dangerous (near) half-layers of~$Q_5$, there are at least two edges~$xx^i$ and $yy^i$ with $x,y\in V(Q^i_1)$ of the $z$-dangerous half-layer in direction~$i$ that are not contained in~$M^i_-$; see Figure~\ref{fig:proof5}~(c).
We choose the direction~$j$ so that it separates~$x$ and~$y$.
As before, we obtain that one of~$|A_{1,0}'|$ and $|A_{1,1}'|$ is even and at least~2, and by symmetry we may assume w.l.o.g.\ that~$|A_{1,0}'|\geq 2$ is even and that~$x\in Q_{1,0}$.
As $yy^i\in L^j_{1i}$ but $yy^i\notin M^i_-$, the matching~$M_{1,0}'$ does not contain the half-layer~$L^j_{1i}$ of~$Q^j_1$.
Furthermore, as $|L|=4$, at least one of the edges~$zz^i\in L$ satisfies~$z\in Q_{1,0}$.
We define~$B:=A_{1,0}'\cap V(Q^i_0)$ and~$C:=A_{1,0}'\setminus B$.
Note that we have~$z^i\in B$.
The matching~$P_{1,0}'$ on~$K(A_{1,0}')$ is obtained is follows.
If $y^i\in B$ and~$y\in C$, which happens if~$y^{iM},y^M\in V(Q_{1,0})$, then we first match~$z^i$ to~$y$ and define~$B:=B\setminus\{y^i\}$ and~$C:=C\setminus\{y\}$.
If $|B|\geq 4$, then we apply Lemma~\ref{lem:avoid-layer} to match all vertices in~$B$, except one if $B$ is odd, such that no near half-layers of~$Q^i_0$ are created.
If $|B|\in\{2,3\}$, then we match one pair of vertices from~$B$, except one vertex if $|B|=3$, which does not create half-layers of~$Q^i_0$.
If $|B|=1$ or one vertex from~$B$ is still unmatched from the previous cases, we match it arbitrarily to one of the vertices in~$C$, and the remaining vertices in~$C$ we match arbitrarily among each other.
Note that $yy^i\notin P_{1,0}'$, and therefore $M_{1,0}'\cup P_{1,0}'$ does not contain the half-layer~$L^j_{1i}$.

We proceed to explain the further arguments assuming that there is a matching~$P_{1,0}'$ on~$K(A_{1,0}')$ that satisfies condition~(a) stated above.
The arguments in case~(b) are analogous, but not completely symmetric (due to~$z$), so our algorithm has to compute both cases separately.
As mentioned before, we can assume that~$M$ is maximal in the sense that it covers at least one end vertex of every edge of~$Q_5\setminus z$, and therefore $M_{1,0}' \cup P_{1,0}'$ is maximal in~$Q_{1,0}'\setminus z$.

Our program constructs the following sets of matchings:
\begin{itemize}[leftmargin=6mm, noitemsep, topsep=1pt plus 1pt]
\item $\cM$: The set of maximal matchings of~$K(Q_{1,0}'\setminus z)$ with an odd number of covered vertices in~$Q^i_0$ and no half-layers of~$Q^i_0$ nor the half-layer~$L^j_{1i}$ of~$Q^j_1$.
\item $\cM'$: The matchings in~$\cM$ that cannot be extended to a cycle of~$K(Q_{1,0}'\setminus z)$.
\item $\cM_1$: The set of pairs~$(M,A)$ where $M$ is obtained from some matching in~$\cM'$ by removing one of its edges and $A$ are the two end vertices of the removed edge.
\end{itemize}
We also compute the following sets in a loop for $k=1,2,3,4$.
\begin{itemize}[leftmargin=6mm, noitemsep, topsep=1pt plus 1pt]
\item $\cM_k'$: The subset of pairs~$(M,A)$ from~$\cM_k$ such that~$M$ cannot be extended to a linear forest of~$K(Q_{1,0}'\setminus z)$ with terminals exactly in~$A$.
\item $\ol{\cM}_k$: The set of matchings $M$ of~$K(Q_5)$ for which there is a pair~$(M',A')$ in~$\cM_k'$ such that $M$ induced on~$Q_{1,0}'$ equals $M'$ and the set of end vertices of edges in~$M$ that leave the set~$Q_{1,0}'$ is~$A'$.
\item $\ol{\cM}'_k$: The matchings in~$\ol{\cM}_k$ that cannot be extended to a cycle of~$K(Q_5\setminus z)$.
\item $\cM_{k+1}$: The set of pairs $(M,A)$ obtained from some pair $(M',A')$ in~$\cM_k'$ by removing one of the edges from~$M'$ and adding the two end vertices of the removed edge to the set~$A$.
\end{itemize}
We note that $\cM_1=\cM_1'$.
Our program reports that the sets $\ol{\cM}'_k$ for $k=1,2,3,4$ are all empty.
This proves that~$M$ can be extended to a cycle in~$K(Q_5)$ that avoids~$z$, as argued in the following.

Recall the assumption from case~(a) that $|A_{1,0}'|\geq 2$ is even and there exists a perfect matching~$P_{1,0}'$ on~$K(A_{1,0}')$ such that $M_{1,0}' \cup P_{1,0}'$ does not contain half-layers of~$Q^i_0$ nor the half-layer~$L^j_{1i}$ of~$Q^j_1$.
This means that $M_{1,0}' \cup P_{1,0}' \in \cM$.

If $M_{1,0}' \cup P_{1,0}' \notin \cM'$, then $M_{1,0}' \cup P_{1,0}'$ can be extended to a cycle $C'$ of~$K(Q_{1,0}'\setminus z)$.
Let $P_{1,0}$ be the shortcut edges of the linear forest $(C' \setminus P_{1,0}') \cup M_{1,0,-}$ in~$Q_{1,0}$.
By Theorem~\ref{thm:cycle}, $P_{1,0} \cup M_{1,0}$ can be extended to a cycle $D$ of~$K(Q_{1,0})$.
Therefore, $(C' \setminus P_{1,0}') \cup M_{1,0,-}\cup (D \setminus P_{1,0})$ is a cycle in~$K(Q_5)$ that extends~$M$ and avoids~$z$.

If $M_{1,0}' \cup P_{1,0}' \in \cM'$, then let $N_1$ be a matching that contains all edges of $M_{1,0}' \cup P_{1,0}'$ except one edge of~$P_{1,0}'$ and let $A_1$ be the end vertices of the removed edge.
By definition, we have $(N_1, A_1) \in \cM_1$.
If $(N_1, A_1) \notin \cM_1'$, then $N_1$ be extended to a linear forest of~$K(Q_{1,0}'\setminus z)$ with terminals exactly in~$A_1$.
Similarly as in the previous paragraph, we then construct shortcut edges in~$Q_{1,0}$, use Theorem~\ref{thm:cycle} to obtain a cycle in~$K(Q_{1,0})$ which we combine with the linear forest to obtain a cycle in~$K(Q_5)$ that extends~$M$ and avoids~$z$.

If $(N_1, A_1) \in \cM_1'$ and $|M_{1,0,-}| = 2$, then we have $M \in \ol{\cM}_1$ by definition.
Since $\ol{\cM}'_1$ is empty, $M$ can be extended to a cycle of~$K(Q_5\setminus z)$.

If $(N_1, A_1) \in \cM_1'$ and $|M_{1,0,-}| \ge 4$, then let $N_2$ be a matching that contains all edges of~$N_1$ except another edge of~$P_{1,0}'$ and let $A_2$ be the end vertices of both edges of~$P_{1,0}'$ excluded in~$N_2$.
By definition, we have $(N_2, A_2) \in \cM_2$.
If $(N_2, A_2) \notin \cM_2'$ or $|M_{1,0,-}| = 4$, then we proceed similarly to before.
Otherwise, we construct and process $N_3$ and $A_3$ analogously.
If $(N_3, A_3) \notin \cM_3'$ or $|M_{1,0,-}| = 6$, then we proceed similarly to before.
Otherwise, we construct and process $N_4$ and $A_4$ analogously.
Since $Q_{1,0}$ has 8 vertices, this is the last possible case, i.e., we have $(N_4, A_4) \notin \cM_4'$ or $|M_{1,0,-}| = 8$.

The running time for this part of our program is approximately 10~hours for each of the two cases~(a) and~(b).

This completes the proof of Theorem~\ref{thm:forbidden} in the case~$d=5$.

\subsection{Implementation details}
\label{sec:program}

We now give some details about how the program described in the previous section generates matchings and tests extendability to a cycle or linear forest.

The program represents a matching or a partial extension of a matching to a cycle or linear forest as an array~$M$ of length $2^d$; i.e. one value for every vertex.
For any vertex~$u\in V(Q_d)$, the possible values of~$M[u]$ are the following:
\begin{description}[font=\normalfont,leftmargin=10mm, noitemsep, topsep=1pt plus 1pt]
\item[Vertex $v\in V(Q_d)$] The matching~$M$ has an edge $uv$. Clearly, it is required that $u \neq v$ and $M[v]=u$.
\item[$\forbidden$] The vertex~$u$ is avoided by~$M$ and must be avoided by an extending cycle or linear forest.
This allows us to mask out various sets of vertices of~$Q_d$ that are irrelevant for a particular test case.
\item[$\uncovered$] The vertex~$u$ is avoided by~$M$ and can be a part of an extending cycle or linear forest.
\item[$\terminal$] The vertex~$u$ must be a terminal vertex of a linear forest.
Clearly, the total number of terminals must be even.
\item[$\match$] A special intermediate label used for generating matchings, as explained below.
\end{description}

\subsubsection{Generating matchings}

We now explain how our program generates different matchings using the data structure~$M$.
We build~$M$ successively by adding matching edges one after the other.
In the simplest case $M$ has no edges initially, but a set~$A$ of vertices of even cardinality marked as $\match$ in~$M$.
In other cases, some edges may already be prescribed or some vertices may be excluded, which is achieved by marking the corresponding vertices as~$\terminal$, $\uncovered$, or~$\forbidden$.
We use two approaches for generating all perfect matchings on the set~$A$.
The first one is based on depth-first search (DFS) and the second one on breadth-first search (BFS).
DFS takes~$M$, chooses a vertex $u \in A$, and for every other vertex $v \in A$ calls DFS for the matching obtained from~$M$ by adding the edge~$uv$ (by setting $M[u]:=v$ and $M[v]:=u$).
This approach stores only the current (partial) matching in memory, and tests a matching for extendability once it is completed to a perfect matching on~$A$.
Only the ones that are not extendable are stored for later processing.

The BFS method works as follows.

\begin{algorithm}[H]
\caption{BFS generation of perfect matchings on vertices in~$M$ marked as $\match$.}
Define $\cS:=\{M\}$\;
Let $a$ be the number of $\match$ vertices in $M$\;
\For{$a/2$ times}{
    Define $\cS':=\emptyset$\;
    \ForEach{$N \in \cS$}{
        Let $u$ be a vertex marked as $\match$ in $N$\;
        \ForEach{vertex $v\neq u$ marked as $\match$ in $N$}{
            Let $N'$ be the matching obtained from $N$ by adding the edge $uv$\;
            Set $\cS':=\cS'\cup \{N'\}$
        }
    }
    Set $\cS := \cS'$\;
    Remove isomorphic matchings from $\cS$
}
\end{algorithm}

BFS stores an entire set~$\cS$ of matchings simultaneously, which is more memory-intensive, but has the advantage that it allows removing isomorphic matchings each time an edge has been added (see the last line in the pseudocode above).
We therefore gain running time if we can quickly remove a significant number of isomorphic matchings.
Intuitively, the fraction of removed matchings decreases with the number of edges added, i.e., while many matchings with few edges will be removed as isomorphic, in later stages only few matchings with more edges will be removed.
For optimal performance, we apply BFS as long as there is enough memory available and the isomorphism testing removes a significant number of matchings.
Afterwards we continue building the matching via DFS.

We follow the approach from~\cite{MR3830138} to remove isomorphic matchings from a set~$\cS$ of matchings of~$Q_d$.
It is well-know that every automorphism of~$Q_d$ is composed of a transposition (i.e., switching certain directions) and a permutation of directions, i.e., there are $2^d d!$ automorphisms of~$Q_d$.
However, in our setting we require the fixed vertex $z=\emptyset$ to be forbidden, so we consider only automorphisms that permute directions, sometimes with one or two directions fixed.
Our function removes isomorphic matchings from~$\cS$ by applying to every matching $M\in \cS$ all relevant automorphisms and by selecting the lexicographically minimal representation.
As isomorphic matchings have the same lexicographically minimal representation, this will remove duplicates.

\subsubsection{Extending matchings to a cycle or linear forest}

We now explain how we test whether a matching~$M$ can be extended to a cycle or linear forest.
Specifically, if $M$ has no $\terminal$ vertices, we seek an extension to a cycle, and if $M$ has $t>0$ many $\terminal$ vertices, then we seek an extension to a linear forest consisting of $t/2$ paths between these $t$ terminals.
In both cases, the extension has to avoid all $\forbidden$ vertices, and it may use any number of $\uncovered$ vertices.
We test extendability using the following straightforward recursive function.

\begin{algorithm}[H]
\caption{Test whether a matching~$M$ can be extended to a cycle or linear forest.}
\SetKwFunction{Extends}{extends}
\myfunc{\Extends{$M$}}{
    Choose a vertex $u$ such that $M[u]$ is a vertex or $\terminal$\;
    \ForEach{$i \in [d]$}{
        \uIf{$uu^i$ {\rm is the last added edge}}{\KwRet{$\tttrue$}}
        \ElseIf{$uu^i$ {\rm can be added}}{
            Let $M'$ be obtained from $M$ by adding the edge $uu^i$\;
            \If{\Extends{$M'$}}{\KwRet{$\tttrue$}}
        }
    }
    \KwRet{$\ttfalse$}
}
\end{algorithm}

Adding an edge $uu^i$ is simple, although we have to consider a few cases depending on the values of~$M[u]$ and~$M[u^i]$.
If both $M[u]$ and $M[u^i]$ are vertices such that $M[u] \neq u^i$, we join the vertices~$M[u]$ and~$M[u^i]$ by an edge and we mark~$u$ and~$u^i$ as $\forbidden$.
Observe that the resulting matching~$M'$ can be extended if and only if $M \cup \{uu^i\}$ can be extended.
I.e., in our algorithm we store in~$M$ a matching that is a compact representation of the actual partial extension, obtained by replacing every path by the shortcut between its two end vertices.
Similarly, if $M[u]$ is a vertex and $M[u^i] = \uncovered$, we join vertices $M[u]$ and~$u^i$ by an edge and we mark~$u$ as~$\forbidden$.
If $M[u]$ is a vertex and $M[u^i]=\terminal$, we mark~$u$ and~$u^i$ as~$\forbidden$ and $M[u]$ as~$\terminal$.
The remaining cases are analogous.

It remains to determine whether an edge~$uu^i$ can be added and whether it is the last edge to be added.
Clearly, if $M[u]=u^i$ and $uu^i$ is not an edge of the original matching (but a shortcut edge), then adding the edge~$uu^i$ closes a cycle.
In order to avoid closing a cycle that misses some edges of~$M$, we maintain two counters for the number of edges and terminals in~$M$.
The last edge $uu^i$ is added if $M$ has one edge~$uu^i$ and no $\terminal$ vertices (extension to cycle), or $M$ has no edges and two $\terminal$ vertices~$u$ and~$u^i$ (extension to linear forest).
We can add an edge~$uu^i$ (not the last one) to the matching $M$ if $M[u] \neq u^i$ and $M$ has at least four terminals or at least one of~$u$ and $M[u]$ is not marked as~$\terminal$.

Our recursive algorithm works for every choice of~$u$, so we choose a vertex that minimizes the number of non-forbidden neighbors in~$Q_d$, which quickly cascades forced choices (if only one non-forbidden neighbor is left).
The number of non-forbidden neighbors for each vertex is maintained in an array.

\section{Proofs of Theorems~\ref{thm:long-Qd}, \ref{thm:long-KQd}, and~\ref{thm:Hamlace2-KQd}}
\label{sec:other-proofs}

\begin{proof}[Proof of Theorem~\ref{thm:long-Qd}]
Let $M$ be a matching of~$Q_d$, and let $M'$ be any maximal matching of~$Q_d$ that extends~$M$.
By Lemma~\ref{lem:max-Qd} we have $|M'|\ge \frac{d}{3d-1}|V(Q_d)|\ge \frac{1}{3}|V(Q_d)|$.
Applying Theorem~\ref{thm:cycle} to~$M'$ yields a cycle of length at least~$2|M'|=\frac{2}{3}|V(Q_d)|=2^{d+1}/3$ that extends~$M$.
\end{proof}

\begin{proof}[Proof of Theorem~\ref{thm:long-KQd}]
Let $M$ be a matching of~$K(Q_d)$, and let $M'$ be any maximal matching of~$K(Q_d)$ that extends~$M$.
By Lemma~\ref{lem:max-KQd} we have $|M'|\ge \frac{1}{4}|V(Q_d)|=2^{d-2}$.
Applying Theorem~\ref{thm:cycle} to~$M'$ yields a cycle of length at least~$2|M'|=\frac{1}{2}|V(Q_d)|=2^{d-1}$ that extends~$M$.
\end{proof}

\begin{proof}[Proof of Theorem~\ref{thm:Hamlace2-KQd}]
Let $M$ be a perfect matching of $K(Q_d\setminus\{x,y\})$ that extends to a cycle~$C$ that avoids~$x$ and~$y$, but contains all other vertices.
W.l.o.g.\ we assume that $x=\emptyset$.
Let $i\in[d]$.
If $y\in Q^i_0$, then as $y$ is avoided by~$M$ and has opposite (odd) parity, then $M$ cannot contain a half-layer in direction~$i$.
On the other hand, if~$y\in Q^i_1$, then $M$ cannot contain a half-layer in direction~$i$ either, as otherwise by property~\propH{} in Theorem~\ref{thm:forbidden} there would be a third vertex~$u\in V(Q^i_0) \setminus\{x\}$ avoided by~$M$, which is impossible.
We conclude that $M$ does not contain a half-layer.

To prove the converse direction, let $M$ be a perfect matching of~$K(Q_d\setminus\{x,y\})$ that does not contain a half-layer.
Then property~\propH{} in Theorem~\ref{thm:forbidden} is trivially satisfied, so the theorem yields a cycle~$C$ that extends~$M$ and avoids~$x$.
By Lemma~\ref{lem:avoid2}, $C$ also avoids~$y$, so the cycle~$C$ has the desired properties.
\end{proof}

\section{Open questions}
\label{sec:open}

We conclude this paper with some interesting directions for future investigations.

It would be interesting to translate the construction described in this paper to an algorithm that computes a cycle~$C$ that extends a given matching~$M$ of~$K(Q_d)$ in polynomial time (polynomial in the size of the graph~$Q_d$ or~$K(Q_d)$), and we do not see any fundamental obstacles to such an endeavor.
Furthermore, can this be improved, along the lines described in~\cite{MR4111723}, where part of the output can be computed already while knowing only part of the input?

Our Theorems~\ref{thm:long-Qd} and~\ref{thm:long-KQd} provide a lower bound for the length of a cycle~$C$ that extends a given matching~$M$.
Similarly, note that our proof of Theorem~\ref{thm:forbidden} starts by the assumption that $M$ is \propH{}-maximal, i.e., in a first step we always add edges to~$M$ so that we can guarantee the existence of a direction~$i\in[d]$ such that many edges of~$M$ have direction~$i$; recall Lemma~\ref{lem:H-maximal}.
This has the effect that even for very small matchings~$M$, the cycle~$C$ might be very long.
Can we instead prove a theorem about a cycle~$C$ extending a given matching~$M$, such that the length of~$C$ is relatively short compared to the size of~$M$, i.e., such that $|C|\leq f(|M|)$ for some reasonable function~$f$?
Obviously, we will always have $|C|\geq 2|M|$.

More generally, given a matching~$M$ of~$Q_d$ or~$K(Q_d)$, for which integers~$\ell$ is there a cycle~$C$ with $|C|=\ell$ that extends~$M$?
This includes the Ruskey-Savage conjecture as a special case (when $\ell=2^d$).
This can also be stated as a decision problem: Given $M$ and~$\ell$, does $C$ exist, yes or no?

Complementing Theorem~\ref{thm:vbwest}, it is known that not every matching of~$K(Q_d)$ can be extended to a cycle factor of~$Q_d$.
In fact, Dvo\v{r}\'{a}k and Fink~\cite{MR3891930} constructed a matching of~$B(Q_d)$ that cannot be extended to a cycle factor.
Is there a matching of~$K(Q_d)$ or~$B(Q_d)$ that is extendable to a cycle factor but not to a Hamilton cycle?

\bibliographystyle{alpha}
\bibliography{refs}

\newcommand{\etalchar}[1]{$^{#1}$}
\begin{thebibliography}{AAA{\etalchar{+}}15}

\bibitem[AAA{\etalchar{+}}15]{MR3337323}
A.~Alahmadi, R.~E.~L. Aldred, A.~Alkenani, R.~Hijazi, P.~Sol\'{e}, and
  C.~Thomassen.
\newblock Extending a perfect matching to a {H}amiltonian cycle.
\newblock {\em Discrete Math. Theor. Comput. Sci.}, 17(1):241--254, 2015.

\bibitem[BR96]{MR1377601}
B.~Bultena and F.~Ruskey.
\newblock Transition restricted {G}ray codes.
\newblock {\em Electron. J. Combin.}, 3(1):Research Paper 11, 11~pp., 1996.

\bibitem[BS96]{MR1410880}
G.~S. Bhat and C.~D. Savage.
\newblock Balanced {G}ray codes.
\newblock {\em Electron. J. Combin.}, 3(1):Research Paper 25, 11~pp., 1996.

\bibitem[CG10]{MR2725517}
N.~Casta{\~{n}}eda and I.~S. Gotchev.
\newblock Embedded paths and cycles in faulty hypercubes.
\newblock {\em J. Comb. Optim.}, 20(3):224--248, 2010.

\bibitem[CL91]{MR1129389}
M.~Y. Chan and S.-J. Lee.
\newblock On the existence of {H}amiltonian circuits in faulty hypercubes.
\newblock {\em SIAM J. Discrete Math.}, 4(4):511--527, 1991.

\bibitem[DDG{\v{S}}09]{MR2568029}
D.~Dimitrov, T.~Dvo\v{r}\'{a}k, P.~Gregor, and R.~{\v{S}}krekovski.
\newblock Gray codes avoiding matchings.
\newblock {\em Discrete Math. Theor. Comput. Sci.}, 11(2):123--147, 2009.

\bibitem[DDG{\v{S}}13]{MR2974271}
D.~Dimitrov, T.~Dvo\v{r}\'{a}k, P.~Gregor, and R.~{\v{S}}krekovski.
\newblock Linear time construction of a compressed {G}ray code.
\newblock {\em European J. Combin.}, 34(1):69--81, 2013.

\bibitem[DF19]{MR3891930}
T.~Dvo\v{r}\'{a}k and J.~Fink.
\newblock Gray codes extending quadratic matchings.
\newblock {\em J. Graph Theory}, 90(2):123--136, 2019.

\bibitem[DG07]{MR2326160}
T.~Dvo\v{r}\'{a}k and P.~Gregor.
\newblock Hamiltonian paths with prescribed edges in hypercubes.
\newblock {\em Discrete Math.}, 307(16):1982--1998, 2007.

\bibitem[Dvo05]{MR2178189}
T.~Dvo\v{r}\'{a}k.
\newblock Hamiltonian cycles with prescribed edges in hypercubes.
\newblock {\em SIAM J. Discrete Math.}, 19(1):135--144, 2005.

\bibitem[Etz07]{MR2629553}
T.~Etzion.
\newblock Single-track {G}ray codes and sequences.
\newblock In {\em Sequences, subsequences, and consequences}, volume 4893 of
  {\em Lecture Notes in Comput. Sci.}, pages 129--133. Springer, Berlin, 2007.

\bibitem[FG12]{MR2970496}
J.~Fink and P.~Gregor.
\newblock Long cycles in hypercubes with optimal number of faulty vertices.
\newblock {\em J. Comb. Optim.}, 24(3):240--265, 2012.

\bibitem[Fin07]{MR2354719}
J.~Fink.
\newblock Perfect matchings extend to {H}amilton cycles in hypercubes.
\newblock {\em J. Combin. Theory Ser. B}, 97(6):1074--1076, 2007.

\bibitem[Fin19]{MR3936192}
J.~Fink.
\newblock Matchings extend into 2-factors in hypercubes.
\newblock {\em Combinatorica}, 39(1):77--84, 2019.

\bibitem[Fin20]{MR4111723}
J.~Fink.
\newblock Two algorithms extending a perfect matching of the hypercube into a
  {H}amiltonian cycle.
\newblock {\em European J. Combin.}, 88:103111, 13, 2020.

\bibitem[FM24]{MR4786527}
J.~Fink and T.~M\"{u}tze.
\newblock Matchings in hypercubes extend to long cycles.
\newblock In {\em Combinatorial algorithms}, volume 14764 of {\em Lecture Notes
  in Comput. Sci.}, pages 14--27. Springer, Cham, [2024] \copyright 2024.

\bibitem[For73]{MR321804}
R.~Forcade.
\newblock Smallest maximal matchings in the graph of the {$d$}-dimensional
  cube.
\newblock {\em J. Combinatorial Theory Ser. B}, 14:153--156, 1973.

\bibitem[FT95]{MR1350586}
S.~Felsner and W.~T. Trotter.
\newblock Colorings of diagrams of interval orders and {$\alpha$}-sequences of
  sets.
\newblock volume 144, pages 23--31. 1995.
\newblock Combinatorics of ordered sets (Oberwolfach, 1991).

\bibitem[GG03]{MR2014514}
L.~Goddyn and P.~Gvozdjak.
\newblock Binary {G}ray codes with long bit runs.
\newblock {\em Electron. J. Combin.}, 10:Research Paper 27, 10~pp., 2003.

\bibitem[git]{gitlab}
\url{https://gitlab.com/jirka.fink/matching_to_long_cycle_hypercube}.

\bibitem[GMM24]{MR4747482}
P.~Gregor, A.~Merino, and T.~M\"{u}tze.
\newblock The {H}amilton compression of highly symmetric graphs.
\newblock {\em Ann. Comb.}, 28(2):379--437, 2024.

\bibitem[GN{\v{S}}18]{MR3830138}
P.~Gregor, T.~Novotn\'{y}, and R.~{\v{S}}krekovski.
\newblock Extending perfect matchings to {G}ray codes with prescribed ends.
\newblock {\em Electron. J. Combin.}, 25(2):Paper No. 2.56, 18~pp., 2018.

\bibitem[Gre09]{MR2510579}
P.~Gregor.
\newblock Perfect matchings extending on subcubes to {H}amiltonian cycles of
  hypercubes.
\newblock {\em Discrete Math.}, 309(6):1711--1713, 2009.

\bibitem[HK82]{MR653356}
I.~Havel and M.~K\v{r}iv\'{a}nek.
\newblock On maximal matchings in {$Q_{6}$} and a conjecture of {R}. {F}orcade.
\newblock {\em Comment. Math. Univ. Carolin.}, 23(1):123--136, 1982.

\bibitem[Joh04]{MR2046083}
J.~R. Johnson.
\newblock Long cycles in the middle two layers of the discrete cube.
\newblock {\em J. Combin. Theory Ser. A}, 105(2):255--271, 2004.

\bibitem[Kre96]{MR1374632}
G.~Kreweras.
\newblock Matchings and {H}amiltonian cycles on hypercubes.
\newblock {\em Bull. Inst. Combin. Appl.}, 16:87--91, 1996.

\bibitem[KS04]{MR2047769}
C.~E. Killian and C.~D. Savage.
\newblock Antipodal {G}ray codes.
\newblock {\em Discrete Math.}, 281(1-3):221--236, 2004.

\bibitem[LW14]{MR3123444}
J.-J. Liu and Y.-L. Wang.
\newblock Hamiltonian cycles in hypercubes with faulty edges.
\newblock {\em Inform. Sci.}, 256:225--233, 2014.

\bibitem[LZB92]{DBLP:conf/ftcs/LatifiZB92}
S.~Latifi, S.-Q. Zheng, and N.~Bagherzadeh.
\newblock Optimal ring embedding in hypercubes with faulty links.
\newblock In {\em Digest of Papers: FTCS-22, The Twenty-Second Annual
  International Symposium on Fault-Tolerant Computing, Boston, Massachusetts,
  USA, July 8-10, 1992}, pages 178--184. {IEEE} Computer Society, 1992.

\bibitem[M{\"{u}}t16]{MR3483129}
T.~M{\"{u}}tze.
\newblock Proof of the middle levels conjecture.
\newblock {\em Proc. Lond. Math. Soc. (3)}, 112(4):677--713, 2016.

\bibitem[M{\"{u}}t23]{MR4649606}
T.~M{\"{u}}tze.
\newblock Combinatorial {G}ray codes---an updated survey.
\newblock {\em Electron. J. Combin.}, DS26(Dynamic Surveys):99~pp., 2023.

\bibitem[Ram90]{MR1081842}
M.~Ramras.
\newblock A new method of generating {H}amiltonian cycles on the {$n$}-cube.
\newblock {\em Discrete Math.}, 85(3):329--331, 1990.

\bibitem[RS93]{MR1201997}
F.~Ruskey and C.~Savage.
\newblock Hamilton cycles that extend transposition matchings in {C}ayley
  graphs of {$S_n$}.
\newblock {\em SIAM J. Discrete Math.}, 6(1):152--166, 1993.

\bibitem[Sav97]{MR1491049}
C.~Savage.
\newblock A survey of combinatorial {G}ray codes.
\newblock {\em SIAM Rev.}, 39(4):605--629, 1997.

\bibitem[Sim78]{MR527987}
G.~J. Simmons.
\newblock Almost all {$n$}-dimensional rectangular lattices are
  {H}amilton-laceable.
\newblock {\em Congr. Numer.}, XXI:649--661, 1978.
\newblock Proceedings of the {N}inth {S}outheastern {C}onference on
  {C}ombinatorics, {G}raph {T}heory and {C}omputing ({B}oca {R}aton, {FL},
  1978).

\bibitem[SW95]{MR1329390}
C.~D. Savage and P.~Winkler.
\newblock Monotone {G}ray codes and the middle levels problem.
\newblock {\em J. Combin. Theory Ser. A}, 70(2):230--248, 1995.

\bibitem[SW07]{DBLP:journals/endm/SawadaW07}
J.~Sawada and D.~C. Wong.
\newblock A fast algorithm to generate {B}eckett-{G}ray codes.
\newblock {\em Electronic Notes in Discrete Mathematics}, 29:571--577, 2007.

\bibitem[VW13]{MR3089715}
J.~Vandenbussche and D.~B. West.
\newblock Extensions to 2-factors in bipartite graphs.
\newblock {\em Electron. J. Combin.}, 20(3):Paper 11, 10~pp., 2013.

\end{thebibliography}
\end{document}